\journalname{JOTA}
\begin{document}

\title{An optimization problem related to water artificial recirculation for controlling eutrophication\thanks{Research partially funded
by Xunta de Galicia (Spain) under project ED431C 2018/50 {\color{red} and ED431C 2019/02.}}}
%\subtitle{Do you have a subtitle?\\ If so, write it here}

\titlerunning{Optimizing water recirculation for eutrophication control}        % if too long for running head

\author{Francisco~J.~Fern\'andez      \and
       Aurea~Mart\'inez    \and 
       Lino~J.~Alvarez-V\'azquez %etc.
}

\authorrunning{F.~J. Fern\'andez et al.} % if too long for running head

\institute{F.~J.~Fern\'andez  \at
              Universidade de Santiago de Compostela, 15782 Santiago, Spain \\
%              Tel.: +123-45-678910\\
%              Fax: +123-45-678910\\
              \email{fjavier.fernandez@usc.es}           %  \\
%             \emph{Present address:} of F. Author  %  if needed
           \and
              A.~Mart\'inez and L.~J.~Alvarez-V\'azquez \at
              Universidade de Vigo, 36310 Vigo, Spain \\
              \email{\{aurea, lino\}@dma.uvigo.es}      
}

\date{Received: date / Accepted: date}
% The correct dates will be entered by the editor

\maketitle

\begin{abstract}
In this work, the artificial recirculation of water is presented and analyzed, from the perspective of the optimal control of partial differential equations,
as a tool to prevent eutrophication effects in large waterbodies. A novel formulation of the environmental problem, based on the coupling of nonlinear models
for hydrodynamics, water temperature and concentrations of the different species involved in the eutrophication processes, is introduced. After a complete and 
rigorous analysis of the existence of optimal solutions, a full numerical algorithm for their computation is proposed. Finally, some numerical results for a realistic 
scenario are shown, in order to prove the efficiency of our approach.
\keywords{Optimal control \and Numerical optimization \and Artificial circulation \and Eutrophication}
% \PACS{PACS code1 \and PACS code2 \and more}
% \subclass{MSC code1 \and MSC code2 \and more}
\end{abstract}

% Introduction
\section{Introduction: The environmental problem}

Eutrophication is one of the most important problems of large masses of water (estuaries, lakes, reservoirs, etc.) 
and it is caused by undue high levels of nutrients (usually nitrogen and phosphorus) reaching the water. These nutrients 
mainly come from human activities (resulting in the discharge of sewage, detergents, fertilizers and so on, 
very rich in phosphate or nitrate), and can cause an excessive phytoplankton 
growth that lead to undesirable effects like algal blooms. This abnormal growth of algae directly affects 
the concentration of dissolved oxygen, mainly in the deeper layers, since the processes of remineralization of 
organic detritus (accumulated in the bottom due to the effects of sedimentation) consumes 
oxygen, which can lead to oxygen depletion of the body of water \cite{intro1}. 
In Figure~\ref{figure1} (left) we can find a schematic representation of the problem and its consequences.

\begin{figure}[!ht]
\centering
\includegraphics[width=0.45\textwidth]{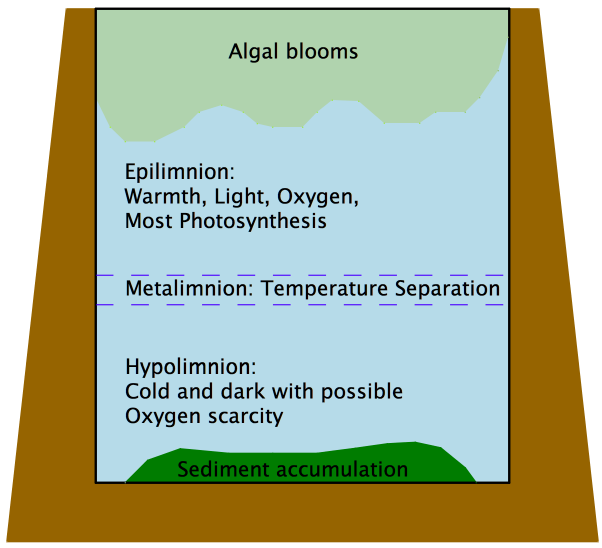} \qquad
\includegraphics[width=0.45\textwidth]{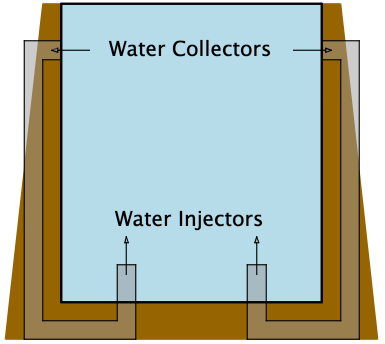}
\caption{On the left side, a diagram representing algal blooms caused by eutrophication and its consequences.
On the right side, a basic scheme depicting the water artificial circulation process.}
\label{figure1}
\end{figure}

Artificial circulation is a management technique for oxygenating eutrophic water bodies subject to 
quality problems, such as loss of oxygen, sediment accumulation and algal blooms. It disrupts stratification 
and minimizes the development of stagnant zones that may be subject to above commented water quality problems. 
In our particular case we are only interested in increasing the dissolved oxygen concentration in the bottom layers
(but our methodology could be extended in a straightforward way to any phenomenon and any region). 
In the process of artificial recirculation, a set of flow pumps takes water from the well aerated upper layers by means of a collector 
and injects it into the poorly oxygenated bottom layers, through a pipeline, setting up a circulation pattern that prevents stratification. 
Then, oxygen-poor water from the bottom is circulated to the surface, where oxygenation from the atmosphere and 
photosynthesis can naturally occur \cite{intro2}. 
In Figure~\ref{figure1} (right) we can find a representation of the main idea of water artificial circulation.

Although eutrophication has received some attention from the mathematical viewpoint in last decade 
(see, for instance, the recent publications \cite{intro3,intro4,intro5} and the references therein), 
the study of artificial circulation as a eutrophication control tool has remained unaddressed in the mathematical literature
up to now, as far as we know (we can only mention a recent paper of the authors \cite{intro2}, where a simplified 
preliminary formulation of the problem is posed and briefly analyzed). 
Thus, in next section we present a detailed mathematical formulation of the physical problem as a control/state constrained 
optimal control problem of nonlinear partial differential equations. Then, in the central part of the paper, we analyze 
the wellposedness of the corresponding state system, in order to demonstrate in a rigorous way the existence of optimal solutions. 
Finally, we present the numerical resolution of the problem, introducing a full computational algorithm and a realistic numerical example,
showing the efficiency of our approach.

% Mathematical formulation of the control problem
\section{Mathematical formulation of the control problem}

In this section we will formulate the environmental problem in the 
framework of optimal control of partial differential equations. For a
better understanding of this novel mathematical formulation, we will 
divide this section into five subsections: in the first subsection we 
will introduce and describe the physical domain; in the second one, 
the control variables (in our case, the volumetric flow 
rate for each pump); in the third subsection we will establish the 
mathematical formulation for the thermo-hydrodynamic model; 
in the fourth one we will present the eutrophication model that 
will be used (the core of our model) and, finally, in the fifth subsection we 
will formulate the optimal control problem.

\subsection{The physical domain}

We consider a domain $\Omega \subset \mathbb{R}^3$ corresponding, for instance, to a reservoir. 
In order to promote the artificial circulation of water inside the domain $\Omega$, we suppose the existence of a set of 
$N_{CT}$ pairs collector-injector $\{(C^k,T^k)\}_{k=1}^{N_{CT}}\subset \partial \Omega$ in such a way that each water collector 
is connected to its corresponding injector by a pipe with a pumping group. We also assume a smooth enough 
boundary $\partial \Omega$, such that it can be split into four disjoint subsets
$\partial \Omega = \Gamma_S \cup \Gamma_C \cup \Gamma_T \cup \Gamma_N$,
where $\displaystyle \Gamma_C$ corresponds to the part of the boundary where the water 
collectors are located ($\Gamma_C=\cup_{k=1}^{N_{CT}} C^k$),
$\displaystyle \Gamma_T$ corresponds to the part of the boundary where the water 
injectors are located ($\Gamma_T=\cup_{k=1}^{N_{CT}}T^k$), $\Gamma_S$ is the top part of the boundary 
in contact with air, and $\displaystyle \Gamma_N=\partial \Omega \setminus \left( \Gamma_S \cup \Gamma_C 
\cup \Gamma_T \right)$ corresponds to the rest of the boundary. 
In particular, we suppose the boundary $\partial \Omega$ regular enough to assure the existence of elements 
$\varphi^k,\, \widetilde{\varphi}^k \in H^{3/2}(\partial \Omega)$, for
$k=1,\ldots,N_{CT}$, satisfying the following assumptions (mainly corresponding 
to suitable regularizations of the indicator functions of $T^k$ and $C^k$, respectively):
\begin{itemize}
\item $\varphi^k(\mathbf{x}),\,\widetilde{\varphi}^k(\mathbf{x})\geq 0$, a.e. $\mathbf{x} \in \partial \Omega$, 
\item $\varphi^k(\mathbf{x})=0$, a.e. $\mathbf{x} \in \partial \Omega \setminus T^k$, 
and $ \int_{T^k} \varphi^k \, d \gamma =\mu(T^k)$,
\item $\widetilde{\varphi}^k(\mathbf{x})=0$, a.e. $\mathbf{x} \in \partial \Omega \setminus C^k$, 
and $ \int_{C^k} \widetilde{\varphi}^k \, d \gamma = \mu(C^k)$,
\end{itemize}
where $\mu(S)$ {\color{red} represents the $n-1$ dimensional measure of a generic set $S$} 
and $\beta_0: u \in H^{3/2}(\partial \Omega) \rightarrow \beta_0(u) \in H^2(\Omega)$ denotes the right inverse of the classical 
trace operator $\gamma_0$, i.e., $(\gamma_0 \circ \beta_0)(u) =u$ (cf. Theorem 8.3. in Chapter 1 of \cite{magenes1}). 
Finally, we also consider a subdomain $\Omega_C \subset \Omega$, corresponding 
to the part of the domain where we want to increase the dissolved oxygen concentration (denoted as control domain in Figure~\ref{figure2}). 

\begin{figure}[!ht]
\centering
\includegraphics[width=0.5\textwidth]{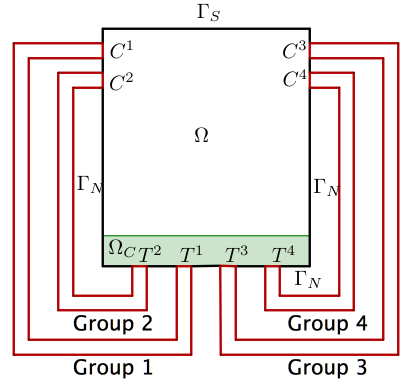}
\caption{Geometrical configuration of an example domain $\Omega$ with $N_{CT}=4$ collector/injector pairs,
showing the different boundary sections: $\Gamma_S$,  $\Gamma_C =\cup_{k=1}^{4} C^k$,  
$\Gamma_T=\cup_{k=1}^{4} T^k$ and $\Gamma_N$, and also the control domain $\Omega_C$.}
\label{figure2}
\end{figure}

\subsection{The control variable}

As above commented, our control will be the volumetric flow rate $({\rm m}^3 \, {\rm s}^{-1})$ by pump 
$k$ at each time $t$, $g^k(t) \in H^1(0,T)$, for $k=1,\ldots, N_{CT}$, where 
$T$ ($\rm{s})$ denotes the length of the time interval.  
We will suppose that the control acts over the system through a 
Dirichlet boundary condition on the hydrodynamic model:
\begin{equation}
\mathbf{v}=\boldsymbol{\phi}_{\mathbf{g}}\quad \mbox{on} \ \partial \Omega \times (0,T),
\end{equation}
where $\mathbf{v}(\mathbf{x},t)$ will denote the water velocity, and where: 
\begin{equation} \label{eq:phig}
\boldsymbol{\phi}_{\mathbf{g}}(\mathbf{x},t)
=\sum_{k=1}^{N_{CT}}g^k(t) \left[\frac{\varphi^k(\mathbf{x})}{\mu(T^k)}
 -\frac{\widetilde{\varphi}^k(\mathbf{x})}{\mu(C^k)} 
\right] \mathbf{n}(\mathbf{x})
\end{equation}
represents the given Dirichlet condition for the hydrodynamic system. 
It is immediate that, thanks to the regularity of the control $\mathbf{g}$ and of the functions 
$\{(\varphi^k,\widetilde{\varphi}^k)\}_{k=1}^{N_{CT}}$, we have that 
$\boldsymbol{\phi}_{\mathbf{g}} \in W^{1,2,2}(0,T;H^{3/2}(\partial \Omega),H^{3/2}(\partial \Omega))$
(cf. expression (\ref{defespacio}) below for a detailed definition of this Sobolev-Bochner space), and also that
\begin{equation} \nonumber
\int_{\partial \Omega} \boldsymbol{\phi}_{\mathbf{g}} \cdot \mathbf{n} \, d \gamma =0.
\end{equation}

\subsection{The thermo-hydrodynamic model}

We denote by $\mathbf{v}(\mathbf{x},t) \ ({\rm m} \, {\rm s}^{-1})$ the solution of the 
following modified Navier-Stokes system with a Smagorinsky model of turbulence:
\begin{equation} \label{eq:system1}
\left\{\begin{array}{l}
\displaystyle
\frac{\partial \mathbf{v}}{\partial t} + \nabla \mathbf{v} \, \mathbf{v} - \text{div} ( \Xi(\mathbf{v}))
+\nabla p = \alpha^0 (\theta-\theta^0) \, \mathbf{a}_g \quad \mbox{in} \; \Omega \times (0,T),  \\
\displaystyle
\nabla \cdot \mathbf{v}=0 \quad \mbox{in} \; \Omega \times (0,T), \\
\displaystyle
\mathbf{v}=\boldsymbol{\phi}_{\mathbf{g}} \quad  \mbox{on}\; \partial \Omega\times (0,T),  \\
\displaystyle
\mathbf{v}(0)=\mathbf{v}^0 \quad \mbox{in}\; \Omega,
\end{array}\right.
\end{equation}
where $\mathbf{a}_g$ $({\rm m} \, {\rm s}^{-2})$ is the gravity acceleration,  
$\alpha^0=-\frac{1}{\rho} \frac{\partial \rho}{\partial \theta}$ $({\rm K}^{-1})$ is the 
thermic expansion coefficient, $\rho$ is the density ,$\mathbf{v}^0$ is the initial 
velocity, and the boundary field $\boldsymbol{\phi}_{\mathbf{g}}$ is the element given by 
(\ref{eq:phig}). The diffusion term $\Xi(\mathbf{v})$ is given by:
\begin{equation}
\Xi(\mathbf{v})=\left. \frac{\partial D(e)}{\partial e} \right|_{e=e(\mathbf{v})},
\ \mbox{ with } e(\mathbf{v})=\frac{1}{2} \left( \nabla \mathbf{v} + \nabla \mathbf{v}^t\right),
\end{equation}
where $D$ is a potential function (for instance, in the standard case of the classical Navier-Stokes equations,
$D(e)=\nu \left[e:e\right]$, with $\nu$ $({\rm m}^2 \, {\rm s}^{-1})$ the kinematic viscosity of the water, and, consequently,
$\Xi(\mathbf{v}) = 2 \nu\, e(\mathbf{v})$).
However, in our case, the Smagorinsky model, the potential function is defined as in \cite{lady1}:
\begin{equation} \label{eq:doperador}
D(e)=\nu \left[e:e\right] + \frac{2}{3} \nu_{tur} \left[e:e\right]^{3/2},
\end{equation}
where $\nu_{tur}$ $({\rm m}^2)$ is the turbulent viscosity. 

Regarding thermic effects, water temperature $\theta(\mathbf{x},t)$ ($\rm{K}$) is the solution of the following convection-diffusion partial 
differential equation with nonhomogeneous, nonlinear, mixed boundary conditions:
\begin{equation} \label{eq:system2}
\left\{\begin{array}{l}
\displaystyle 
\frac{\partial \theta}{\partial t}+ \mathbf{v} \cdot \nabla \theta - 
\nabla \cdot (K \nabla \theta) =0 \quad \mbox{in}\; \Omega \times (0,T), \\ 
\displaystyle
\theta =  \phi_{\theta} \quad \mbox{on} \; \Gamma_T\times (0,T), \\ 
\displaystyle
K \frac{\partial \theta}{\partial \mathbf{n}} = 0 \quad \mbox{on} \; \Gamma_C \times (0,T), \vspace{0.1cm}\\ 
\displaystyle
K \frac{\partial \theta}{\partial \mathbf{n}}= b_1^N (\theta_N-\theta) \quad \mbox{on} \; \Gamma_N \times (0,T), \vspace{0.1cm}\\ 
\displaystyle
K \frac{\partial \theta}{\partial \mathbf{n}} = b_1^S (\theta_S-\theta) +
b_2^S (T_r^4-|\theta|^3\theta) \quad \mbox{on}\; \Gamma_S \times (0,T),  \\ 
\theta(0)=\theta^0  \quad \mbox{in} \; \Omega,
\end{array}\right.
\end{equation}
where Dirichlet boundary condition $\phi_{\theta}$ is given by expression:
\begin{equation} \label{eq:defphi}
\phi_{\theta}(\mathbf{x},t)=\sum_{k=1}^{N_{CT}} \varphi^k(\mathbf{x})
\int_{-T}^T \rho_{\epsilon}(t-\epsilon-s) \, \gamma_{\theta}^k(s) \,  ds
\end{equation}
with, for each $k=1,\ldots,N_{CT}$, 
\begin{equation} \label{eq:gammatheta}
\gamma_{\theta}^k(s)=\left\{
\begin{array}{lcl}
\displaystyle \frac{1}{\mu(C^k)} \int_{C^k} \theta^0 \, d \gamma & \mbox{ if } & s \leq 0,  \\
\displaystyle \frac{1}{\mu(C^k)} \int_{C^k} \theta(s) \, d \gamma & \mbox{ if } & s > 0,
\displaystyle
\end{array}\right.
\end{equation}
representing the mean temperature of water in the collector $C_k$, and with the weight function 
$\rho_{\epsilon}$ defined by:
\begin{equation} \label{eq:rhoeps}
\rho_{\epsilon}(t)=\left\{\begin{array}{lcl}
\displaystyle \frac{c}{\epsilon} \, \exp\left(\frac{t^2}{t^2-\epsilon^2}\right) & \mbox{ if } &
|t| < \epsilon,  \\ 
\displaystyle 0 & \mbox{ if } & |t| \geq \epsilon,
\end{array}\right.
\end{equation}
for $c \in \mathbb{R}$ the positive constant satisfying the unitary condition:
\begin{equation} \nonumber
\displaystyle \int_{\mathbb{R}} \rho_1(t) \, dt =1.
\end{equation}
In other words, we are assuming that the mean temperature of water at each injector $T_k$ is a 
weighted average in time of the mean temperatures of water at its corresponding collector $C_k$. 
In order to obtain the mean temperature at each injector, we convolute the 
mean temperature at the collector with a smooth function with support 
in $(t-2\epsilon,t)$. In this way, we have that the temperature in the injector 
only depends on the mean temperatures in the collector along the time interval 
$(t-2\epsilon,t)$. Parameter $0<\epsilon<T$ represents, in a certain 
sense, the technical characteristics of the pipeline that define the stay time of water in the pipe. 
We also suppose that there is not heat transfer through the walls of the pipelines (that is, they are isolated). 

Moreover, for the other terms appearing in the formulation of problem (\ref{eq:system2}) we have that:
\begin{itemize}
\item $\mathbf{n}$ is the unit outward normal vector to the boundary $\partial \Omega$.
\item $K>0 \ ({\rm m}^2 \, {\rm s}^{-1})$ is the thermal diffusivity 
of the fluid, that is, $K=\frac{\alpha}{\rho\, c_p}$, where 
$\alpha\ ({\rm W} \, {\rm m}^{-1} \, {\rm K}^{-1})$ is the thermal conductivity, 
$\rho\ ({\rm g} \, {\rm m}^{-3})$ is the density, and 
$c_p\ ({\rm W} \, {\rm s} \, {\rm g}^{-1} \, {\rm K}^{-1})$ is the specific heat 
capacity of water.
\item $b_1^K \geq 0 \ ({\rm m} \, {\rm s}^{-1})$, for $K\in\{N,S\}$, are the coefficients 
related to convective heat transfer through the boundaries $\Gamma_N$ 
and $\Gamma_S$, obtained from the relation $\rho \, c_p\, b_1^K=h^K$, 
where $h^K\geq 0 \ ({\rm W} \, {\rm m}^{-2} \, {\rm K}^{-1})$ 
are the convective heat transfer coefficients on each surface.
\item $b_2^S>0\ ({\rm m} \, {\rm s} \, {\rm K}^{-3})$ is the coefficient 
related to radiative heat transfer through the boundary $\Gamma_S$, given by
$b_2^S=\frac{\sigma_B\, \varepsilon}{\rho\, c_p}$, where 
$\sigma_B\ ({\rm W} \, {\rm m}^{-2} \, {\rm K}^{-4})$ 
is the Stefan-Boltzmann constant and $\varepsilon$ is the emissivity.
\item $\theta^0\geq 0 \ ({\rm K})$ represents the initial temperature. 
\item $\theta_N,\, \theta_S \geq 0 \ ({\rm K})$ are the temperatures 
related to convection heat transfer on the surfaces $\Gamma_N$ 
and $\Gamma_S$. 
\item $T_r \geq 0 \ ({\rm K})$ is the radiation temperature on the 
surface $\Gamma_S$, derived from the expression
$\sigma_B \, \varepsilon\, T_r^4 = (1-a) R_{sw,net}+R_{lw,down}$,
where $a$ is the albedo, $R_{sw,net}\ ({\rm W} \, {\rm m}^{-2})$ 
denotes the net incident shortwave radiation on the surface $\Gamma_R$,
and $R_{lw,down} \ ({\rm W} \, {\rm m}^{-2})$ denotes 
the downwelling longwave radiation.
\end{itemize}

\subsection{The eutrophication model}

We consider the following system for modelling the eutrophication processes, based in Michaelis-Menten kinetics 
(further details can be found, for instance, in \cite{fran1,DRAGO200117} and the references therein), where we consider
the concentrations of five different species: $u^1(\mathbf{x},t) \ ({\rm mg} \, {\rm l}^{-1})$ stands for the nutrient (nitrogen in this case),
$u^2(\mathbf{x},t) \ ({\rm mgC} \, {\rm l}^{-1})$ for the phytoplankton, $u^3(\mathbf{x},t) \ ({\rm mgC} \, {\rm l}^{-1})$ for the zooplankton,
$u^4(\mathbf{x},t) \ ({\rm mgC} \, {\rm l}^{-1})$ for the organic detritus, and $u^5(\mathbf{x},t) \ ({\rm mg} \, {\rm l}^{-1})$ for the dissolved oxygen:   
\begin{equation} \label{eq:system3}
\left\{\begin{array}{l}
\displaystyle \frac{\partial u^i}{\partial t} + 
\mathbf{v} \cdot \nabla u^i - 
\nabla \cdot (\mu^i \nabla u^i) = 
A^i(\mathbf{x},t,\theta,\mathbf{u}) \quad \mbox{in}\; \Omega \times (0,T),  \\ 
\displaystyle u^i= \phi_{u^i} \quad \mbox{on}\; \Gamma_T \times (0,T),  \\
\displaystyle 
\mu^i \frac{\partial u^i}{\partial \mathbf{n}}=0 \quad \mbox{on}\; 
(\Gamma_S \cup \Gamma_N \cup \Gamma_C )\times(0,T),  \\ 
\displaystyle 
u^i(0)=u^{0,i} \quad \mbox{in}\; \Omega,  \qquad i=1,\ldots,5,
\end{array}\right.
\end{equation}
where, for $i=1,\ldots,5$,
\begin{equation}
\phi_{u^i}(\mathbf{x},t)=\sum_{k=1}^{N_{CT}} 
\varphi^k(\mathbf{x})
\int_{-T}^T 
\rho_{\epsilon}(t-\epsilon-s) \gamma_{u^i}^k(s) 
\, d s, 
\end{equation}
and, for $k=1,\ldots,N_{CT}$, and $i=1,\ldots,5$,  
\begin{equation}\label{eq:gammaeutro}
\gamma_{u^i}^k(s)=\left\{
\begin{array}{lcl}
\displaystyle \frac{1}{\mu(C^k)} \int_{C^k} u^{0,i} \, d \gamma, 
& \mbox{if} & s \leq 0,  \vspace{0.1cm} \\
\displaystyle \frac{1}{\mu(C^k)} \int_{C^k} u^i(s) \, d \gamma, 
& \mbox{if} & s > 0.
\displaystyle
\end{array}\right.
\end{equation}
Finally, the reaction term 
$\mathbf{A}=(A^i):\Omega \times (0,T) \times \mathbb{R}^6
\rightarrow \mathbb{R}^5$ is defined by the following expression:
\begin{equation} \label{eq:defA}
\mathbf{A} (\mathbf{x},t,\theta,\mathbf{u})= \left[\begin{array}{l}
\displaystyle
- \frac{C_{nc} L(\mathbf{x},t,\theta) }{K_N+|u^1|}u^1 u^2
+C_{nc} K_r u^2   +
C_{nc} K_{rd} D(\theta) u^4 \vspace{0.1cm} \\
\displaystyle
 \frac{L(\mathbf{x},t,\theta)}{K_N+|u^1|}u^1u^2-K_r u^2   -
K_{mf}u^2- \frac{K_z}{K_F+|u^2|}u^2 u^3 \vspace{0.1cm} \\
\displaystyle
 \frac{C_{fz}K_z}{K_F+|u^2|}u^2 u^3-K_{mz}u^3 \vspace{0.1cm}  \\
\displaystyle
K_{mf}u^2+K_{mz}u^3-K_{rd}D(\theta)u^4  \vspace{0.1cm} \\
\displaystyle
\frac{ C_{oc}  L(\mathbf{x},t,\theta)}{K_N+|u^1|}u^1 u^2-C_{oc} K_r u^2 -
C_{oc}K_{rd}D(\theta)u^4
\end{array} \right]
\end{equation}
where:
\begin{itemize}
\item $C_{oc} \geq 0 \ ({\rm mg} \, {\rm mgC}^{-1})$ is the oxygen-carbon stoichiometric relation,
\item $C_{nc} \geq 0 \ ({\rm mg} \, {\rm mgC}^{-1})$ is the nitrogen-carbon stoichiometric relation,
\item $C_{fz} \geq 0$ is the zooplankton grazing efficiency factor,
\item $K_{rd} \geq 0 \ ({\rm s}^{-1})$ is the detritus regeneration rate,
\item $K_r \geq 0 \ ({\rm s}^{-1})$ is the phytoplankton endogenous respiration rate,
\item $K_{mf} \geq 0 \ ({\rm s}^{-1})$ is the phytoplankton death rate,
\item $K_{mz} \geq 0 \ ({\rm s}^{-1})$ is the zooplankton death rate (including predation),
\item $K_z \geq 0 \ ({\rm s}^{-1})$ is the zooplankton predation (grazing),
\item $K_F > 0 \ ({\rm mgC} \, {\rm l}^{-1})$ is the phytoplankton half-saturation constant,
\item $K_N > 0 \ ({\rm mg} \, {\rm l}^{-1})$ is the nitrogen half-saturation constant,
\item $\mu^i \geq 0 \ ({\rm m}^2 \, {\rm s}^{-1})$, $i=1,\ldots,5$, are the diffusion coefficients of each species,
\item $D$ is the thermic regeneration function for the organic detritus, defined as:
\begin{equation}
D(\theta)=\Theta^{\theta-\theta^0},
\end{equation}
with $\log(\Theta) \ ({\rm K}^{-1})$ the thermic regeneration constant for the reference 
temperature $\theta^0$. In order to simplify the mathematical 
analysis of the state equations we will consider the following linear approximation:
\begin{equation}
D(\theta)=1+\log(\Theta)(\theta-\theta^0)
\end{equation}
if $\Theta>0$, and $D(\theta)=1$ if $\Theta=0$.
\item $L$ is the luminosity function, given by:
\begin{equation}
L(\mathbf{x},t,\theta)=\mu \, C_t^{\theta-\theta^0}\, \frac{I^0(t)}{I_s}\, e^{-\varphi_1 x_3}, 
\end{equation}
with $I^0 \ ({\rm W} \, {\rm m}^{-2})$ the incident light intensity, $I_s \ ({\rm W} \, {\rm m}^{-2})$ 
the light saturation, $\log(C_t) \ ({\rm K}^{-1})$ the phytoplankton growth thermic constant for the 
reference temperature $\theta^0$, $\varphi_1 \ ({\rm m}^{-1})$ the light attenuation due to depth,
and $\mu \ ({\rm s}^{-1})$ the maximum phytoplankton growth rate. Again, for the sake of simplicity, 
we will consider the following linear approximation:
\begin{equation}
L(\mathbf{x},t,\theta)=\mu \left(1+\log(C_t)(\theta-\theta^0) 
\right) \frac{I^0(t)}{I_s}\, e^{-\varphi_1 x_3}
\end{equation}
if $C_t>0$, and $L(\mathbf{x},t,\theta)=\mu \, \frac{I^0(t)}{I_s} \, e^{-\varphi_1 x_3}$ if $C_t=0$.
\end{itemize}

\subsection{The optimal control problem}

Our main objective is to {\color{red} ensure that the concentration of dissolved oxygen in the bottom layer is 
in an admissible range by means of an optimal artificial circulation of water from the well aerated upper layer}. 
So, we want to solve the following optimal control problem
\begin{eqnarray*} %\label{eq:optimalproblem}
(\mathcal{P}) \qquad 
\min\{J(\mathbf{g}) \, : \, \mathbf{g} \in \mathcal{U}_{ad},\, 
\frac{1}{\mu(\Omega_C)} \int_{\Omega_C} u^5(t) \, d \mathbf{x} \in [\lambda^m,\lambda^M] \},
\end{eqnarray*}
{\color{red} where 
\begin{equation}\label{uad}
\begin{array}{l}
\mathcal{U}_{ad}= \{\mathbf{g} \in [H^1(0,T)]^{N_{CT}} \, : \, \mathbf{g}(0)=\boldsymbol{0},\, \text{and} \\ \displaystyle \qquad 
\|g^k\|_{H^1(0,T)} \leq c, \, \forall k=1,\ldots,N_{CT} \}
\end{array}
\end{equation}}
is the admissible set, $c>0$ is a constant related to technological limitations of the pumps, $J(\mathbf{g})$ is the cost function:
\begin{equation}
J(\mathbf{g})=\frac{1}{2} \sum_{k=1}^{N_{CT}} \int_0^T g^k(t)^2 \, dt
+ \frac{1}{2}\sum_{k=1}^{N_{CT}} \int_0^T \frac{d g^k}{dt}(t)^2 \, dt,
\end{equation}
and $\lambda^m,\lambda^M>0$ represent, respectively, minimum and maximum permissible 
concentrations in the control domain $\Omega_C$. Finally,  
$(\mathbf{v},\theta,\mathbf{u})$ are the solutions of the coupled state systems 
(\ref{eq:system1}), (\ref{eq:system2}) and (\ref{eq:system3}).

% Mathematical analysis of the state equations
\section{Mathematical analysis of the state equations}

In order to establish the appropriate framework for mathematically 
analyzing the coupled state systems (\ref{eq:system1}), (\ref{eq:system2}) and (\ref{eq:system3}), 
we consider, for a Banach space $V_1$ and a locally convex space $V_2$ such that $V_1\subset V_2$, 
and for $1\leq p,q \leq \infty$, the following Sobolev-Bochner space (cf. Chapter 7 of \cite{Roubicek1} for further details):
\begin{equation} \label{defespacio}
W^{1,p,q}(0,T;V_1,V_2)=\left\{u \in L^p(0,T;V_1):\; \frac{du}{dt} \in L^q(0,T;V_2)\right\},
\end{equation}
where $\frac{du}{dt} $ denotes the derivative of $u$ in the sense of distributions. It is well known that, if both $V_1$ and $V_2$ are Banach 
spaces, then $W^{1,p,q}(0,T;V_1,V_2)$ is also a Banach space endowed 
with the norm $\|u\|_{W^{1,p,q}(0,T;V_1,V_2)}=\|u\|_{L^p(0,T;V_1)}+
\left\| \frac{du}{dt}  %du/dt
\right\|_{L^{q}(0,T;V_2)}$. 

So, for the modified Navier-Stokes system (\ref{eq:system1}) 
we consider the following spaces:
\begin{equation}
\begin{array}{rcl}
\displaystyle
\mathbf{X}_1 &=& \left\{\mathbf{v}\in [W^{1,3}(\Omega)]^3 \, : \, \nabla \cdot \mathbf{v}=0, \, 
\mathbf{v}_{|_{\partial\Omega \setminus (\Gamma_C \cup \Gamma_T)}}= \boldsymbol{0} \right\} , \\
\displaystyle
\widetilde{\mathbf{X}}_1 &=& \left\{\mathbf{v}\in [W^{1,3}(\Omega)]^3 \, : \, 
\nabla \cdot \mathbf{v}=0,  \, \mathbf{v}_{|_{\partial\Omega}}= \boldsymbol{0} \right\} .
\end{array}
\end{equation}
Then, associated to the previous spaces, we define:
\begin{equation}
\begin{array}{rcl}
\displaystyle
\mathbf{W}_1&=&
\displaystyle
W^{1,\infty,2}(0,T;\mathbf{X}_1,[L^2(\Omega)]^3) \cap 
\mathcal{C}([0,T];\mathbf{X}_1), \\
\displaystyle
\widetilde{\mathbf{W}}_1&=&
\displaystyle
W^{1,\infty,2}(0,T;\widetilde{\mathbf{X}}_1,[L^2(\Omega)]^3) \cap 
\mathcal{C}([0,T];\widetilde{\mathbf{X}}_1).
\end{array}
\end{equation}

Now, for the water temperature system (\ref{eq:system2}), we consider the 
following spaces:
\begin{equation} 
\begin{array}{rcl}
X_2&=&\{\theta \in H^1(\Omega) \, : \, \theta_{|_{\Gamma_S}}\in L^5(\Gamma_S)\}, \\ 
\widetilde{X}_2 &=& \{\theta \in X_2 \, : \, \theta_{|_{\Gamma_T}}=0\}.
\end{array}
\end{equation}
If we define the following norm associated to above space $X_2$:
\begin{equation} \nonumber
\|\theta\|_{X_2}=\|\theta\|_{H^1(\Omega)}+\|\theta\|_{L^5(\Gamma_S)},
\end{equation}
we have that $X_2$ is a reflexive separable Banach space 
(cf. Lemma 3.1 of \cite{Zolesio1}), and that $\widetilde{X}_2 \subset L^2(\Omega) 
\subset \widetilde{X}_2'$ is an evolution triple. So, we consider:
\begin{equation} \hspace{-.8cm}
\begin{array}{rcl}
\displaystyle
W_2&=&\{\theta \in W^{1,2,5/4}(0,T;X_2,X_2'): \theta_{|_{\Gamma_S}} \in 
L^5(0,T;L^5(\Gamma_S))\}\cap L^{\infty}(0,T;L^2(\Omega)), \\
\widetilde{W}_2 &=& \{\theta \in W^{1,2,5/4}(0,T;\widetilde{X}_2,\widetilde{X}_2'):
\theta_{|_{\Gamma_S}} \in 
L^5(0,T;L^5(\Gamma_S))\}\cap L^{\infty}(0,T;L^2(\Omega)). 
\end{array}
\end{equation}

Finally, for the eutrophication system (\ref{eq:system3}), we define:
\begin{equation}
\begin{array}{rcl}
\displaystyle \mathbf{X}_3 &=& [H^1(\Omega)]^5 , \\ 
\displaystyle \widetilde{\mathbf{X}}_3 &=& \{\mathbf{u} \in \mathbf{X}_3 \, : \, \mathbf{u}_{|_{\Gamma_T}}=\mathbf{0}\},
\end{array}
\end{equation}
and we consider the following spaces associated to them:
\begin{equation}
\begin{array}{rcl}
\displaystyle \mathbf{W}_3 &=& W^{1,2,2}(0,T;\mathbf{X}_3, \mathbf{X}_3'), \\
\displaystyle \widetilde{\mathbf{W}}_3 &=& W^{1,2,2}(0,T;\widetilde{\mathbf{X}}_3, \widetilde{\mathbf{X}}_3').
\end{array}
\end{equation}

From this section we will assume the following hypotheses for coefficients and data in the
analytical study of the problem:
\begin{itemize}
\item $g^k \in H^1(0,T)$, with $g^k(0)=0$, $\forall k=1,\ldots,N_{CT}$,
\item $\mathbf{v}^0 \in \left[H_{\sigma}^2(\Omega)\right]^3=\{\mathbf{v} \in [H^2(\Omega)]^3 \, : \, \nabla \cdot \mathbf{v}=0, \, 
\mathbf{v}_{|_{\partial \Omega}}=\boldsymbol{0}\}\subset \widetilde{\mathbf{X}}_1$,
\item $\theta_S \in L^2(0,T;L^2(\Gamma_S))$,
\item $\theta_N \in L^2(0,T;L^2(\Gamma_N))$,
\item $T_r \in L^5(0,T;L^5(\Gamma_S))$,
\item $\theta^0 \in X_2$,
\item $I_0 \in L^{\infty}(0,T)$,
\item $\mathbf{u}^0 \in \mathbf{X}_3$.
\end{itemize}

Under these hypotheses we will state now two lemmas (whose demonstrations can be found 
in \cite{fran7} and \cite{fran8}, respectively), which will allow 
us to reformulate the state systems (\ref{eq:system1}), (\ref{eq:system2}) 
and (\ref{eq:system3}) as homogeneous Dirichlet problems.

\begin{lemma} \label{traza1} There exists a linear continuous extension:
\begin{equation} 
\begin{array}{rcl}
R_{\mathbf{v}}: [H^1(0,T)]^{N_{CT}}& \rightarrow & W^{1,2,2}(0,T;[H_{\sigma}^2(\Omega)]^3,[H_{\sigma}^2(\Omega)]^3) \\
\mathbf{g} & \rightarrow &R_{\mathbf{v}}(\mathbf{g} )= \boldsymbol{\zeta}_{\mathbf{g}}
\end{array}	
\end{equation}
such that ${\boldsymbol{\zeta}_{\mathbf{g}}}_{|_{\partial \Omega}}=
\boldsymbol{\phi}_{\mathbf{g}}$, where $\boldsymbol{\phi}_{\mathbf{g}}$ 
is defined by (\ref{eq:phig}), and $H_{\sigma}^2(\Omega)=\{ \mathbf{u} 
\in [H^2(\Omega)]^3 \, : \, \nabla \cdot \mathbf{u}=0\}$.  \qed
\end{lemma}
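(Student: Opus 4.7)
The plan is to exploit the tensor product structure of $\boldsymbol{\phi}_{\mathbf{g}}$ visible in (\ref{eq:phig}): it is a finite sum of products of time functions $g^k(t)$ and fixed spatial boundary fields. I would therefore build the extension by lifting each spatial factor once and for all into a divergence-free $H^2(\Omega)$-field, and then reassemble in time.

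First, for each $k=1,\ldots,N_{CT}$, I would set
\begin{equation}
\boldsymbol{\psi}^k(\mathbf{x})=\left[\frac{\varphi^k(\mathbf{x})}{\mu(T^k)}-\frac{\widetilde{\varphi}^k(\mathbf{x})}{\mu(C^k)}\right]\mathbf{n}(\mathbf{x}),\qquad \mathbf{x}\in\partial\Omega .
\end{equation}
Since $\varphi^k,\widetilde{\varphi}^k\in H^{3/2}(\partial\Omega)$ and $\partial\Omega$ is smooth enough, $\boldsymbol{\psi}^k\in [H^{3/2}(\partial\Omega)]^3$. Moreover, the normalizations imposed on $\varphi^k,\widetilde{\varphi}^k$ give the compatibility condition $\int_{\partial\Omega}\boldsymbol{\psi}^k\cdot\mathbf{n}\,d\gamma=1-1=0$, which is exactly what one needs to lift a boundary datum to a solenoidal field.

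Second, I would invoke the classical lifting result for the inhomogeneous Stokes problem (for instance, Theorem IV.1.1 in Galdi, or the construction in Temam's monograph): under the regularity of $\partial\Omega$, for any $\boldsymbol{\psi}\in[H^{3/2}(\partial\Omega)]^3$ with $\int_{\partial\Omega}\boldsymbol{\psi}\cdot\mathbf{n}\,d\gamma=0$, there exists $\boldsymbol{\zeta}\in [H^2(\Omega)]^3$ with $\nabla\cdot\boldsymbol{\zeta}=0$ and $\boldsymbol{\zeta}|_{\partial\Omega}=\boldsymbol{\psi}$, with linear continuous dependence on $\boldsymbol{\psi}$. Applying this to each $\boldsymbol{\psi}^k$ yields $\boldsymbol{\zeta}^k\in [H^2_\sigma(\Omega)]^3$. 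Then I would define
\begin{equation}
R_{\mathbf{v}}(\mathbf{g})(\mathbf{x},t)=\boldsymbol{\zeta}_{\mathbf{g}}(\mathbf{x},t):=\sum_{k=1}^{N_{CT}}g^k(t)\,\boldsymbol{\zeta}^k(\mathbf{x}).
\end{equation}
By construction $\boldsymbol{\zeta}_{\mathbf{g}}|_{\partial\Omega}=\sum_k g^k(t)\boldsymbol{\psi}^k=\boldsymbol{\phi}_{\mathbf{g}}$, and $\nabla\cdot\boldsymbol{\zeta}_{\mathbf{g}}(\cdot,t)=0$ pointwise in $t$.

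Third, the time regularity is automatic: since $g^k\in H^1(0,T)$, one has $g^k,\,dg^k/dt\in L^2(0,T)$, and the spatial factors $\boldsymbol{\zeta}^k$ are independent of $t$, so both $\boldsymbol{\zeta}_{\mathbf{g}}$ and $\partial_t\boldsymbol{\zeta}_{\mathbf{g}}=\sum_k (dg^k/dt)\boldsymbol{\zeta}^k$ lie in $L^2(0,T;[H^2_\sigma(\Omega)]^3)$, placing $\boldsymbol{\zeta}_{\mathbf{g}}$ in the claimed Sobolev-Bochner space. Linearity of $\mathbf{g}\mapsto\boldsymbol{\zeta}_{\mathbf{g}}$ is obvious, and the continuity estimate
\begin{equation}
\|\boldsymbol{\zeta}_{\mathbf{g}}\|_{W^{1,2,2}(0,T;[H^2_\sigma(\Omega)]^3,[H^2_\sigma(\Omega)]^3)}\leq C\sum_{k=1}^{N_{CT}}\|\boldsymbol{\zeta}^k\|_{[H^2(\Omega)]^3}\,\|g^k\|_{H^1(0,T)}
\end{equation}
follows at once, with $C$ depending only on $N_{CT}$ and the lifting constants.

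The only non-routine step is the second one, namely securing the divergence-free $H^2(\Omega)$-lifting of an $H^{3/2}(\partial\Omega)$-trace with zero flux; this is where the smoothness assumed on $\partial\Omega$ and the carefully chosen normalizations of $\varphi^k$ and $\widetilde{\varphi}^k$ are really used. Everything else is separation of variables plus a direct norm estimate, which is why the authors relegate the detailed proof to the reference \cite{fran7}.
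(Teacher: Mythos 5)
Your construction is correct, and it is consistent with what the paper attributes to its own (externally deferred) proof: the paper does not prove Lemma~\ref{traza1} in the text but cites \cite{fran7}, so the only internal evidence about the intended construction is the Remark that follows the lemma. Your separation-of-variables reduction is sound: the zero-flux computation $\int_{\partial\Omega}\boldsymbol{\psi}^k\cdot\mathbf{n}\,d\gamma=1-1=0$ correctly uses the normalizations of $\varphi^k$ and $\widetilde{\varphi}^k$, the solenoidal $H^2(\Omega)$-lifting of a zero-flux $H^{3/2}(\partial\Omega)$ datum is the standard result you cite, and the time regularity and the continuity estimate follow immediately from $g^k\in H^1(0,T)$ and the time-independence of the spatial factors $\boldsymbol{\zeta}^k$.

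One point deserves emphasis. The Remark after the lemma asserts that the construction yields
\begin{equation}
\nu \int_{\Omega} \epsilon( \boldsymbol{\zeta}_{\mathbf{g}} ): \epsilon( \boldsymbol{\eta})\, d \mathbf{x}=0,\quad
\forall  \boldsymbol{\eta} \in \widetilde{\mathbf{X}}_1 .
\end{equation}
This is \emph{not} a property of an arbitrary solenoidal lifting; it holds precisely when each $\boldsymbol{\zeta}^k$ is taken to be the solution of the stationary Stokes problem with boundary datum $\boldsymbol{\psi}^k$, since then the weak formulation gives $\int_\Omega\epsilon(\boldsymbol{\zeta}^k):\epsilon(\boldsymbol{\eta})\,d\mathbf{x}=0$ for every divergence-free $\boldsymbol{\eta}$ vanishing on $\partial\Omega$. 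You do invoke the Stokes lifting, but you present it as one of several interchangeable options ("Galdi, or the construction in Temam"); to remain compatible with the Remark, and hence with the variational formulation (\ref{eq:system1c}) in which this term is dropped, you should fix the Stokes-problem realization of the lifting rather than any solenoidal extension with the right trace. With that choice made explicit, your argument proves the lemma and supports the subsequent use made of it.
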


\begin{remark} It is worthwhile emphasizing here that, thanks to the construction done in the proof of Lemma \ref{traza1}, we have 
\begin{equation} \nonumber
\nu \int_{\Omega} e( \boldsymbol{\zeta}_{\mathbf{g}} ): e( \boldsymbol{\eta})\, d \mathbf{x}=0,\quad
\forall  \boldsymbol{\eta} \in \widetilde{\mathbf{X}}_1,
\end{equation}
and, consequently, this term will not appear in the corresponding variational formulation. \qed
\end{remark}

\begin{lemma} \label{traza2} We have that the following operator is compact
\begin{equation}
\begin{array}{rcl}
R_{\mathbf{h}}:[L^2(0,T)]^{N_{CT}} & \rightarrow & W^{1,2,2}(0,T;H^{2}(\Omega),H^2(\Omega))\\ 
\mathbf{h} & \rightarrow & R_{\mathbf{h}}(\mathbf{h} )=\zeta_{\mathbf{h} },
\end{array}
\end{equation}
where:
\begin{equation}
\zeta_{\mathbf{h} }(\mathbf{x},t)=\sum_{k=1}^{N_{CT}} \beta_0(\varphi^k(\mathbf{x}) )
\int_{-T}^T \rho_{\epsilon} (t-\epsilon-s) 
\gamma^k_{\mathbf{h} }(s) \, ds,
\end{equation}
with $\gamma^k_{\mathbf{h}}(s) \in L^2(-T,T)$, for $k=1,\ldots,N_{CT}$, 
defined by:
\begin{equation}
\gamma^k_{\mathbf{h}}(s)=\left\{
\begin{array}{lcl}
\displaystyle \frac{1}{\mu(C^k)} \int_{C^k} \theta^0 \, d \gamma & \mbox{ if } & s \leq 0,   \\
\displaystyle{h^k}(s) & \mbox{ if } & s > 0,
\end{array}\right.
\end{equation}
and $\beta_0: u \in H^{3/2}(\partial \Omega) \rightarrow \beta_0(u) \in H^2(\Omega)$ the right inverse of the classical 
trace operator $\gamma_0$, i.e., such that $(\gamma_0 \circ \beta_0)(u) =u$ (cf. Theorem 8.3. of \cite{magenes1}). 

We also have the existence of a constant $C$, that depends continuously on the space-time 
computational domain and the initial temperature $\theta^0$, such that:
\begin{equation} \label{eq:acotapf0}
\|\zeta_{\mathbf{h}}\|_{W^{1,2,2}(0,T;H^2(\Omega),H^2(\Omega))} \leq 
C(\theta^0) \, (1+\|\mathbf{h}\|_{[L^2(0,T)]^{N_{CT}}}).
\end{equation}  \qed
\end{lemma}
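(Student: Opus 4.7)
The plan is to exploit the separated-variables structure of $\zeta_{\mathbf{h}}$. Writing
\[
\zeta_{\mathbf{h}}(\mathbf{x},t) = \sum_{k=1}^{N_{CT}} \beta_0(\varphi^k)(\mathbf{x})\, f^k_{\mathbf{h}}(t), \qquad f^k_{\mathbf{h}}(t) = \int_{-T}^{T} \rho_{\epsilon}(t-\epsilon-s)\,\gamma^k_{\mathbf{h}}(s)\,ds,
\]
we note that the spatial profiles $\beta_0(\varphi^k) \in H^2(\Omega)$ are fixed, determined only by the geometry through the continuous right-inverse $\beta_0 : H^{3/2}(\partial\Omega) \to H^2(\Omega)$ of the trace (Theorem 8.3 of \cite{magenes1}). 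All dependence on $\mathbf{h}$ is therefore concentrated in the $N_{CT}$ scalar functions of time $f^k_{\mathbf{h}}$, so $R_{\mathbf{h}}$ factors through a finite-dimensional subspace of $H^2(\Omega)$, and compactness reduces to compactness of the temporal map $\mathbf{h} \mapsto (f^k_{\mathbf{h}})_{k=1}^{N_{CT}}$.

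To obtain the norm estimate \eqref{eq:acotapf0}, I would first bound $\gamma^k_{\mathbf{h}}$ in $L^2(-T,T)$: on $(-T,0]$ it equals the constant $\frac{1}{\mu(C^k)}\int_{C^k}\theta^0\,d\gamma$, which is controlled by some $C(\theta^0)$ via the continuous trace $H^1(\Omega)\hookrightarrow L^2(\partial\Omega)$, while on $(0,T]$ it coincides with $h^k$. Hence $\|\gamma^k_{\mathbf{h}}\|_{L^2(-T,T)} \leq C(\theta^0) + \|h^k\|_{L^2(0,T)}$. Since $\rho_{\epsilon}\in C_c^{\infty}(\mathbb{R})$, differentiation commutes with the convolution to give $(f^k_{\mathbf{h}})^{(j)}(t) = \int_{-T}^T \rho_{\epsilon}^{(j)}(t-\epsilon-s)\gamma^k_{\mathbf{h}}(s)\,ds$ for every $j\geq 0$, and Young's (or Minkowski's) convolution inequality yields
\[
\|(f^k_{\mathbf{h}})^{(j)}\|_{L^2(0,T)} \leq \|\rho_{\epsilon}^{(j)}\|_{L^1(\mathbb{R})}\, \|\gamma^k_{\mathbf{h}}\|_{L^2(-T,T)}.
\]
Combined with $\|\beta_0(\varphi^k)\|_{H^2(\Omega)}\leq C\|\varphi^k\|_{H^{3/2}(\partial\Omega)}$ and summation over $k$, this controls both $\|\zeta_{\mathbf{h}}\|_{L^2(0,T;H^2(\Omega))}$ and $\|\partial_t\zeta_{\mathbf{h}}\|_{L^2(0,T;H^2(\Omega))}$ by $C(\theta^0)(1+\|\mathbf{h}\|_{[L^2(0,T)]^{N_{CT}}})$, establishing both well-posedness of $R_{\mathbf{h}}$ into the claimed space and the bound \eqref{eq:acotapf0}.

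For compactness, I would apply the same Young estimate up to $j=2$ to upgrade this to $\|f^k_{\mathbf{h}}\|_{H^2(0,T)} \leq C(\theta^0)(1+\|h^k\|_{L^2(0,T)})$. Given a bounded sequence $\{\mathbf{h}_n\}$ in $[L^2(0,T)]^{N_{CT}}$, each $\{f^k_{\mathbf{h}_n}\}$ is then bounded in $H^2(0,T)$; by Rellich–Kondrachov, $H^2(0,T)\hookrightarrow H^1(0,T)$ compactly, so a diagonal subsequence extraction gives $f^k_{\mathbf{h}_n} \to f^k$ in $H^1(0,T)$ for every $k$. Because the spatial profiles $\beta_0(\varphi^k)$ are fixed, the componentwise estimate
\[
\|\zeta_{\mathbf{h}_n}-\zeta\|_{W^{1,2,2}(0,T;H^2,H^2)}^2 \leq C\sum_{k=1}^{N_{CT}}\|\beta_0(\varphi^k)\|_{H^2(\Omega)}^2\, \|f^k_{\mathbf{h}_n}-f^k\|_{H^1(0,T)}^2
\]
transfers this convergence to the target space, proving compactness of $R_{\mathbf{h}}$.

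The only genuinely delicate points are bookkeeping rather than conceptual: one must track the dependence of constants on $\theta^0$ (responsible for the affine $1+\|\mathbf{h}\|$ shape of \eqref{eq:acotapf0}, coming from the constant branch of $\gamma^k_{\mathbf{h}}$ on $(-T,0]$), and one must verify that the time derivative of $\zeta_{\mathbf{h}}$ indeed lies in $L^2(0,T;H^2(\Omega))$ (and not in a weaker space), which is automatic here because $\partial_t$ acts only on the smooth scalar factor $f^k_{\mathbf{h}}$, leaving the spatial $H^2(\Omega)$-regularity supplied by the fixed profiles $\beta_0(\varphi^k)$ untouched.
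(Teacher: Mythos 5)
Your proof is correct. Note that the paper does not actually prove Lemma~\ref{traza2} in the text: it only states it and refers to \cite{fran8} for the demonstration, so there is no in-paper argument to compare against line by line. That said, your route is the natural one and is complete: the separated-variables structure $\zeta_{\mathbf{h}}=\sum_k \beta_0(\varphi^k)\,f^k_{\mathbf{h}}(t)$ correctly isolates all dependence on $\mathbf{h}$ in finitely many scalar functions of time; the affine $L^2(-T,T)$ bound on $\gamma^k_{\mathbf{h}}$ (constant branch controlled by the trace of $\theta^0$, which accounts for the $1+\|\mathbf{h}\|$ shape of the estimate) combined with Young's inequality for the mollifier $\rho_{\epsilon}\in C_c^{\infty}$ gives the bound (\ref{eq:acotapf0}); and the gain of one full derivative in time ($f^k_{\mathbf{h}}$ bounded in $H^2(0,T)$ while the target norm only requires $H^1(0,T)$ regularity of the temporal factor) plus the one-dimensional Rellich embedding yields compactness, since the fixed spatial profiles $\beta_0(\varphi^k)\in H^2(\Omega)$ transfer the $H^1(0,T)$ convergence to $W^{1,2,2}(0,T;H^2(\Omega),H^2(\Omega))$. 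The only point worth making explicit is that $R_{\mathbf{h}}$ is affine rather than linear (because of the $\theta^0$-branch of $\gamma^k_{\mathbf{h}}$), so compactness must be read, as the paper does elsewhere, as continuity plus relative compactness of images of bounded sets; both follow from your estimates.
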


Now, we will establish the following notations, in order to stablish the homogeneous Dirichlet systems. 
{\color{red} Given elements $(\mathbf{z},\xi,\mathbf{w}) \in \widetilde{\mathbf{W}}_1\times \widetilde{W}_2 \times 
\widetilde{\mathbf{W}}_3$, we define $(\mathbf{v},\theta,\mathbf{u})\in 
\mathbf{W}_1\times W_2 \times \mathbf{W}_3$ in the the following way:
\begin{itemize}
\item $\mathbf{v}=\mathbf{z} + \boldsymbol{\zeta}_{\mathbf{g}} \in \mathbf{W}_1$, 
with $\boldsymbol{\zeta}_{\mathbf{g}} \in W^{1,2,2}(0,T;[H_{\sigma}^2(\Omega)]^3,[H_{\sigma}^2(\Omega)]^3)$ 
the extension of control $\mathbf{g}$ given by Lemma \ref{traza1}.
\item $\theta=\xi+ \zeta_{\mathbf{h}_{\theta}}\in W_2$, 
with $\displaystyle \zeta_{\mathbf{h_{\theta}}} \in W^{1,2,2}(0,T;H^{2}(\Omega),H^{2}(\Omega))$ 
the extension of $\mathbf{h}_{\theta}$ obtained from Lemma \ref{traza2}, where:
\begin{equation} \label{eq:h1}
h_{\theta}^k(s)=\frac{1}{\mu(C^k)} \int_{C^k} \theta(s) \, d \gamma, \quad k=1,\ldots,N_{CT}.
\end{equation}
\item $u^i=w^i + \zeta_{\mathbf{h}^i_{\mathbf{u}}}$, with 
$\zeta_{\mathbf{h}^i_{\mathbf{u}}} \in W^{1,2,2}(0,T;H^2(\Omega),H^2(\Omega))$ 
the extension of $\mathbf{h}^i_{\mathbf{u}}$ obtained from Lemma \ref{traza2} with obvious modifications, where:
\begin{equation} \label{eq:h2}
h_{\mathbf{u}}^{i,k}(s)=\frac{1}{\mu(C^k)} \int_{C^k} u^i(s) \, d \gamma, \quad k=1,\ldots,N_{CT}, \quad i=1,\ldots,5.
\end{equation}
As it is immediate, $\mathbf{w}\in \mathbf{W}_3$.
\end{itemize}
}
Thus, using above notations, we can reformulate the state systems 
(\ref{eq:system1}), (\ref{eq:system2}) and (\ref{eq:system3}) in the following way:
\begin{equation} \label{eq:system1b}
\left\{
\begin{array}{l}
\displaystyle \frac{\partial \mathbf{z}}{\partial t} + \nabla (\boldsymbol{\zeta}_{\mathbf{g}} +
\mathbf{z}) \mathbf{z} +\nabla \mathbf{z} \boldsymbol{\zeta}_{\mathbf{g}} \\ 
\displaystyle  \quad -\text{div}  \left( 2 \nu e(\mathbf{z})+
2 \nu_{tur} \int_{\Omega} \left[e(\boldsymbol{\zeta}_{\mathbf{g}}+\mathbf{z}):
e(\boldsymbol{\zeta}_{\mathbf{g}}+\mathbf{z}) \right]^{1/2} e(\boldsymbol{\zeta}_{\mathbf{g}}+\mathbf{z}) \right) \\ 
\displaystyle \quad +\nabla p = \alpha_0(\theta-\theta^0)\, \mathbf{a}_{g}- \frac{\partial \boldsymbol{\zeta}_{\mathbf{g}}}{\partial t}
-\nabla \boldsymbol{\zeta}_{\mathbf{g}} \boldsymbol{\zeta}_{\mathbf{g}} +2 \nu \nabla \cdot 
e(\boldsymbol{\zeta}_{\mathbf{g}})\quad \mbox{in} \; \Omega \times (0,T), \\
\displaystyle \mathbf{z}  = \mathbf{0} \quad \mbox{on} \; \partial \Omega \times (0,T),  \\
\displaystyle \mathbf{z}(0)=\mathbf{v}^0 \quad \mbox{in}\; \Omega.
\end{array}
\right.
\end{equation}
\begin{equation}\label{eq:system2b}
\left\{\begin{array}{l}
\displaystyle \frac{\partial \xi}{\partial t}+\mathbf{v} \cdot \nabla \xi - \nabla \cdot (K \nabla \xi)  \\ 
\displaystyle \quad = -\frac{\partial \zeta_{\mathbf{h}_{\theta}}}{\partial t}-\mathbf{v} \cdot \nabla \zeta_{\mathbf{h}_{\theta}} + 
\nabla \cdot (K \nabla \zeta_{\mathbf{h}_{\theta}}) \quad \mbox{in} \; \Omega \times (0,T), \\ 
\displaystyle 
\xi=0 \quad \mbox{on} \; T^k \times (0,T),  \quad  \mbox{for} \; k=1,\ldots,N_{CT}, \\ 
\displaystyle K \frac{\partial \xi}{\partial \mathbf{n}}= -K  \frac{\partial \zeta_{\mathbf{h}_{\theta}}}{\partial \mathbf{n}} 
\quad \mbox{on} \; C^k \times (0,T), \quad \mbox{for} \; k=1,\ldots,N_{CT}, \\ 
\displaystyle K \frac{\partial \xi}{\partial \mathbf{n}}=b_1^N\big(
\theta_N-\zeta_{\mathbf{h}_{\theta}}-\frac{K}{b_1^N} \frac{\partial \zeta_{\mathbf{h}_{\theta}}}{\partial \mathbf{n}}-\xi
\big) \quad \mbox{on}\; \Gamma_N \times (0,T), \\ 
\displaystyle K \frac{\partial \xi}{\partial \mathbf{n}}=b_1^S\big(
\theta_S-\zeta_{\mathbf{h}_{\theta}}-\frac{K}{b_1^S} \frac{\partial \zeta_{\mathbf{h}_{\theta}}}{\partial \mathbf{n}}-\xi \big) \\
\displaystyle \quad +b_2^S \big(T_r^4-|\xi+\zeta_{\mathbf{h}_{\theta}}|^3(\xi+\zeta_{\mathbf{h}_{\theta}})\big) 
\quad \mbox{on}\; \Gamma_S \times (0,T), \\ 
\displaystyle \xi(0)=\theta^0-\zeta_{\mathbf{h}_{\theta}}(0) \quad \mbox{in} \; \Omega.
\end{array}\right.
\end{equation}
\begin{equation}\label{eq:system3b}
\left\{\begin{array}{l}
\displaystyle \frac{\partial w^i}{\partial t} + \mathbf{v} \cdot \nabla w^i - \nabla \cdot (\mu^i \nabla w^i) =
A^i(\mathbf{x},t,\theta,\boldsymbol{\zeta}_{\mathbf{h}_{\mathbf{u}}}+\mathbf{w}) \\
\displaystyle \quad -\frac{\partial \zeta_{\mathbf{h}_{\mathbf{u}}^i}}{\partial t}
- \mathbf{v} \cdot \nabla \zeta_{\mathbf{h}_{\mathbf{u}}^i}+\nabla \cdot (\mu^i \nabla 
\zeta_{\mathbf{h}_{\mathbf{u}}^i}) \quad \mbox{in} \; \Omega \times (0,T), \\
\displaystyle \frac{\partial w^i}{\partial \mathbf{n}}=
-\mu^i \frac{\partial \zeta_{\mathbf{h}_{\mathbf{u}}^i}}{\partial \mathbf{n}} \quad \mbox{on} \; 
(\Gamma_S \cup \Gamma_N \cup \Gamma_C )\times(0,T), \\ 
\displaystyle w^i=0 \quad \mbox{on}\; T^k \times (0,T), \quad \mbox{for}\; k=1,\ldots,N_{CT},  \\
\displaystyle w^i(0)=u^{0,i}-\zeta_{\mathbf{h}_{\mathbf{u}}^i}(0) \quad \mbox{in}\; \Omega,  \qquad  i=1,\ldots,5.
\end{array}\right.
\end{equation}

It is worthwhile noting here that all three previous systems show homogeneous Dirichlet 
boundary conditions and, consequently, we will be able to define the concept of solution of 
the original state systems (\ref{eq:system1}), (\ref{eq:system2}) and 
(\ref{eq:system3}) in terms of the modified state systems (\ref{eq:system1b}), 
(\ref{eq:system2b}) and (\ref{eq:system3b}). It should be also noted that, in the case of systems 
(\ref{eq:system2}) and (\ref{eq:system3}), the coupling terms in the Dirichlet boundary conditions are now 
transferred to the partial differential equations in systems (\ref{eq:system2b}) and (\ref{eq:system3b}).

\begin{definition}[The concept of solution for the state systems] \label{definition1} 
An element $(\mathbf{v},\theta,\mathbf{u}) \in \mathbf{W}_1 \times W_2 \times \mathbf{W}_3$ is a solution 
for the state systems (\ref{eq:system1}), (\ref{eq:system2}) and (\ref{eq:system3}), if there exists an element 
$(\mathbf{z},\xi,\mathbf{w})\in \widetilde{\mathbf{W}}_1\times \widetilde{W}_2 \times \widetilde{\mathbf{W}}_3$ such that:
\begin{itemize}
\item $\mathbf{v}=\mathbf{z} + \boldsymbol{\zeta}_{\mathbf{g}}$, 
with $\boldsymbol{\zeta}_{\mathbf{g}}\in W^{1,2,2}(0,T;[H_{\sigma}^2(\Omega)]^3,[H_{\sigma}^2(\Omega)]^3)$ 
as given by Lemma \ref{traza1}, 
$\mathbf{z}(0)=\mathbf{v}^0$, a.e. $\mathbf{x} \in \Omega$, and 
$\mathbf{z}\in \widetilde{\mathbf{W}}_1$ the solution of the following variational formulation:
\begin{equation} \label{eq:system1c}
\hspace{-.7cm} \begin{array}{r}
\displaystyle \int_{\Omega} \frac{\partial \mathbf{z}}{\partial t} \cdot \boldsymbol{\eta} \,d\mathbf{x} +
\int_{\Omega} \nabla (\boldsymbol{\zeta}_{\mathbf{g}}+
\mathbf{z}) \mathbf{z} \cdot \boldsymbol{\eta} \, d \mathbf{x} +
\int_{\Omega} \nabla \mathbf{z} \boldsymbol{\zeta}_{\mathbf{g}} \cdot \boldsymbol{\eta} \, d \mathbf{x}
+ 2 \nu \int_{\Omega} e(\mathbf{z}) : e(\boldsymbol{\eta}) \, d \mathbf{x} \\
\displaystyle + 2 \nu_{tur} \int_{\Omega} \left[e(\boldsymbol{\zeta}_{\mathbf{g}}+\mathbf{z}):
e(\boldsymbol{\zeta}_{\mathbf{g}}+\mathbf{z}) \right]^{1/2} e(\boldsymbol{\zeta}_{\mathbf{g}}+\mathbf{z}):
e(\boldsymbol{\eta}) \, d \mathbf{x} \\
\displaystyle=\int_{\Omega} \mathbf{H}_{\mathbf{g}} \cdot \boldsymbol{\eta} \, d \mathbf{x},
\quad \mbox{a.e.} \ t \in (0,T), \quad \forall \boldsymbol{\eta} \in \widetilde{\mathbf{X}}_1,
\end{array}
\end{equation}
where:
\begin{equation}
\mathbf{H}_{\mathbf{g}}= \alpha_0 (\theta-\theta^0) \, \mathbf{a}_g- \frac{\partial \boldsymbol{\zeta}_{\mathbf{g}}}{\partial t}
-\nabla \boldsymbol{\zeta}_{\mathbf{g}} \boldsymbol{\zeta}_{\mathbf{g}} \in L^2(0,T;[L^2(\Omega)]^3).
\end{equation}
\item $\theta=\xi + \zeta_{\mathbf{h}_{\theta}}$, with $\displaystyle \zeta_{\mathbf{h_{\theta}}}
\in W^{1,2,2}(0,T;H^{2}(\Omega),H^{2}(\Omega))$ obtained from Lemma \ref{traza2} for $\mathbf{h}_{\theta}\in 
[L^2(0,T)]^{N_{CT}}$ defined by (\ref{eq:h1}), 
$ \xi(0)=\theta^0-\zeta_{\mathbf{h}_{\theta}}(0)$, a.e. $\mathbf{x} \in \Omega$, 
and $\xi \in \widetilde{W}_2$ the solution of the following variational formulation:
\begin{equation} \label{eq:system2c}
\begin{array}{r}
\displaystyle \int_{\Omega} \frac{\partial \xi}{\partial t} \eta \, d \mathbf{x} +
\int_{\Omega} \mathbf{v} \cdot \nabla \xi \eta \, d \mathbf{x} + 
K \int_{\Omega} \nabla \xi \cdot \nabla \eta \, d \mathbf{x} +  
b_1^N \int_{\Gamma_N} \xi \eta \, d \gamma \\ \displaystyle
+b_1^S \int_{\Gamma_S} \xi \eta \, d \gamma +
b_2^S \int_{\Gamma_S} |\xi+\zeta_{\mathbf{h}_{\theta}}|^3
(\xi+\zeta_{\mathbf{h}_{\theta}}) \eta \, d \gamma = 
\int_{\Omega} H_{\mathbf{h}_{\theta}}  \eta \, d \mathbf{x} 
\\ \displaystyle
+\int_{\Gamma_C} g_{\mathbf{h}_{\theta}}^{C} \eta \, d \gamma  
+b_1^N \int_{\Gamma_N}   g_{\mathbf{h}_{\theta}}^{N} \eta \, d \gamma 
+b_1^S \int_{\Gamma_S}   g_{\mathbf{h}_{\theta}}^{S} \eta \, d \gamma \\
\displaystyle
+b_2^S \int_{\Gamma_S} T_r^4 \eta \, d \gamma, 
\quad \mbox{a.e.} \ t \in (0,T), \quad \forall \eta \in \widetilde{X}_2,
\end{array}
\end{equation}
where:
\begin{equation}
\begin{array}{rcl}
\displaystyle H_{\mathbf{h}_{\theta}}&=& 
\displaystyle \frac{\partial \zeta_{\mathbf{h}_{\theta}}}{\partial t}-\mathbf{v} \cdot \nabla \zeta_{\mathbf{h}_{\theta}} + 
\nabla \cdot (K \nabla \zeta_{\mathbf{h}_{\theta}}) \in L^2(0,T;L^2(\Omega)), \\
\displaystyle g^C_{\mathbf{h}_{\theta}} &=& \displaystyle -K  \frac{\partial \zeta_{\mathbf{h}_{\theta}}}{\partial \mathbf{n}} \in L^2(0,T;L^2(\Gamma_C)), \\ 
\displaystyle g^N_{\mathbf{h}_{\theta}}&=& \displaystyle \theta_N-
\zeta_{\mathbf{h}_{\theta}}-\frac{K}{b_1^N} \frac{\partial \zeta_{\mathbf{h}_{\theta}}}{\partial \mathbf{n}}
\in L^2(0,T;L^2(\Gamma_N)), \\ 
\displaystyle g^S_{\mathbf{h}_{\theta}}&=& \displaystyle \theta_S-\zeta_{\mathbf{h}_{\theta}}-\frac{K}{b_1^S} \frac{\partial \zeta_{\mathbf{h}_{\theta}}}{\partial \mathbf{n}}
\in L^2(0,T;L^2(\Gamma_S)).
\end{array}
\end{equation}
\item $u^i=w^i + \zeta_{\mathbf{h}^i_{\mathbf{u}}}$, with 
$\zeta_{\mathbf{h}^i_{\mathbf{u}}} \in W^{1,2,2}(0,T;H^2(\Omega), H^2(\Omega))$ obtained from
Lemma \ref{traza2} for $\mathbf{h}^i_{\mathbf{u}} \in [L^2(0,T)]^{N_{CT}}$ defined by (\ref{eq:h2}), 
$ \mathbf{w}(0)=\mathbf{u}_0 - \boldsymbol{\zeta}_{\mathbf{h}_\mathbf{u}}(0)$, a.e. $\mathbf{x} \in \Omega$, 
and $\mathbf{w} \in \widetilde{\mathbf{W}}_3$ the solution of the following variational formulation:
\begin{equation} \label{eq:system3c}
\begin{array}{r}
\displaystyle \int_{\Omega} \frac{\partial \mathbf{w}}{\partial t} \cdot \boldsymbol{\eta} \, d\mathbf{x} + 
\int_{\Omega} \nabla \mathbf{w} \mathbf{v} \cdot  \boldsymbol{\eta} \, d\mathbf{x} +
\Lambda_{\mu} \int_{\Omega}  \nabla \mathbf{w}:\nabla  \boldsymbol{\eta} \, d\mathbf{x} \\ 
\displaystyle = \int_{\Omega} \mathbf{A}(\theta,\boldsymbol{\zeta}_{\mathbf{h}_{\mathbf{u}}}+\mathbf{w}) \cdot \boldsymbol{\eta} \, d \mathbf{x} 
+\int_{\Omega} \mathbf{H}_{\mathbf{u}} \cdot  \boldsymbol{\eta} \, d \mathbf{x} \\ 
\displaystyle +\int_{\Gamma_S \cup \Gamma_N \cup \Gamma_C} \mathbf{g}_{\mathbf{u}} \cdot \boldsymbol{\eta} \, d \gamma, 
\quad \mbox{a.e.} \ t \in (0,T), \quad \forall \boldsymbol{\eta} \in \widetilde{\mathbf{X}}_3,
\end{array}
\end{equation}
where $ \Lambda_{\mu}={\rm diag}(\mu^1,\ldots,\mu^5) \in M_{5 \times 5}(\mathbb{R})$ is a diagonal matrix with diffusion coefficients, and:
\begin{equation}
\begin{array}{rcl}
\displaystyle H_{\mathbf{u}}^i &=& \displaystyle -\frac{\partial \zeta_{\mathbf{h}^i_{\mathbf{u}}}}{\partial t}
- \mathbf{v} \cdot \nabla \zeta_{\mathbf{h}^i_{\mathbf{u}}}+\nabla \cdot (\mu^i \nabla 
\zeta_{\mathbf{h}^i_{\mathbf{u}}}) \in L^2(0,T;L^2(\Omega)), \\
\displaystyle g_{\mathbf{u}}^i &=& \displaystyle -\mu^i \frac{\partial \zeta_{\mathbf{h}^i_{\mathbf{u}}}}{\partial \mathbf{n}} \in 
L^2(0,T;L^2(\Gamma_S \cup \Gamma_N \cup \Gamma_C)), \quad i=1,\ldots,5.
\end{array}
\end{equation} \qed
\end{itemize}
\end{definition}

\begin{remark} We have the following dependence scheme between the elements of state system:
{\color{red}\begin{equation} \nonumber %\label{eq:diagram}
\begin{array}{ccccccccccc}
\mathbf{g} & \ \rightarrow & \big( \mathbf{v} & & \longleftrightarrow & & \theta \big) & &\rightarrow && \mathbf{u}.
\end{array}
\end{equation}}
Therefore, we can separate the mathematical analysis of systems (\ref{eq:system1})-(\ref{eq:system2}) from system (\ref{eq:system3}). 
The coupled system (\ref{eq:system1})-(\ref{eq:system2}) has been fully analyzed by the authors in \cite{fran7} and \cite{fran8}. 
Thus, following the results there, we can assure that, for each control $\mathbf{g} \in [H^1(0,T)]^{N_{CT}}$, there exists a solution 
$(\mathbf{v},\theta) \in \widetilde{\mathbf{W}}_1\times \widetilde{W}_2$ of the thermo-hydrodynamic system (\ref{eq:system1})-(\ref{eq:system2}).
We must remark here that, due to the complexity of this nonlinear system, we cannot obtain a uniqueness result for the thermo-hydrodynamic solution
$(\mathbf{v},\theta)$ under our general hypotheses. However, this property will not be necessary in our approach, and previous existence result will be sufficient 
for our argumentation.
So, we can focus now all our attention in analyzing the solution $\mathbf{u}$ of the eutrophication system (\ref{eq:system3}) 
or, equivalently, in studying the solution $\mathbf{w}$ of the modified system (\ref{eq:system3b}). 
\qed
\end{remark}

Thus, in order to analyze the existence of a solution $\mathbf{u}$ by means of a fixed point technique, we consider the operator:
\begin{equation}\label{puntofijo1}
\begin{array}{l}
\displaystyle 
\mathbf{M}_{\mathbf{u}}: (\mathbf{u}^*,\mathbf{h}^*_{\mathbf{u}}) \in 
[L^{2}(0,T;L^2(\Omega))]^5\times [L^2(0,T)]^{5 \times N_{CT}} \longrightarrow  \\ 
\qquad \mathbf{M}_{\mathbf{u}}(\mathbf{u}^*,\mathbf{h}^*_{\mathbf{u}}) = 
(\mathbf{u},\mathbf{h}_{\mathbf{u}}) \in [L^{2}(0,T;L^2(\Omega))]^5\times [L^2(0,T)]^{5 \times N_{CT}},
\end{array}
\end{equation}
where $\mathbf{u}^*=({u^1}^*,\ldots,{u^5}^*)$, with ${u^i}^* \in L^2(0,T;L^2(\Omega))$, for $i=1,\ldots,5$, 
$\mathbf{h}^*_{\mathbf{u}}= ({\mathbf{h}^1_{\mathbf{u}}}^*,\ldots, {\mathbf{h}^5_{\mathbf{u}}}^*)$, with ${\mathbf{h}^i_{\mathbf{u}}}^* 
\in [L^2(0,T)]^{N_{CT}}$, for $i=1,\ldots,5$,  
$\mathbf{u}=(u^1,\ldots,u^5) \in \mathbf{W}_3$, 
$\mathbf{h}_{\mathbf{u}} =(\mathbf{h}^1_{\mathbf{u}},\ldots, \mathbf{h}^5_{\mathbf{u}}) \in [L^2(0,T)]^{5 \times N_{CT}}$, such that:
\begin{itemize}
\item $\zeta_{{\mathbf{h}^{i*}_{\mathbf{u}}}} \in W^{1,2,2}(0,T;H^{2}(\Omega),H^{2}(\Omega))$,
for $i=1,\ldots,5$, is defined by Lemma \ref{traza2}.
\item $\mathbf{u} \in {\mathbf{W}}_3$ is the solution, in the sense of Definition \ref{definition1} with the obvious 
modifications, of the following decoupled problem with resolution order $i=3 \rightarrow 2 \rightarrow 4 \rightarrow 1 \rightarrow 5$:
\begin{equation} \label{eq:system3d1}
\left\{\begin{array}{l}
\displaystyle \frac{\partial u^i}{\partial t} + \mathbf{v} \cdot \nabla u^i - \nabla \cdot (\mu^i \nabla u^i) = 
\widehat{A}^i(\mathbf{x},t,\theta,\mathbf{u}^*,\mathbf{u}) \quad \mbox{in} \; \Omega \times (0,T), \\ 
\displaystyle u^i= \zeta_{{\mathbf{h}^{i*}_{\mathbf{u}}}} \quad  \mbox{on}\; \Gamma_T \times (0,T),  \\
\displaystyle \mu^i \frac{\partial u^i}{\partial \mathbf{n}}=0 \quad \mbox{on}\; 
(\Gamma_S \cup \Gamma_N \cup \Gamma_C )\times(0,T),  \\ 
\displaystyle u^i(0)=u^{0,i} \quad \mbox{in}\; \Omega,  \qquad i=1,\ldots,5,
\end{array}\right.
\end{equation}
where the Caratheodory function $\widehat{\mathbf{A}}=(\widehat{A}^i):\Omega \times (0,T) \times \mathbb{R}^6 \times \mathbb{R}^6
\rightarrow \mathbb{R}^5$ is defined by:
\begin{eqnarray*} \label{eq:defA2} \hspace{-.7cm}
\widehat{\mathbf{A}} (\mathbf{x},t,\theta,\mathbf{u}^*,\mathbf{u})
= \left[\begin{array}{l}
\displaystyle
-C_{nc} L(\mathbf{x},t,\theta) \frac{ {u^1}^* }{K_N+|{u^1}^*|} u^2 +C_{nc} K_r u^2 + C_{nc} K_{rd} D(\theta) u^4 \\
\displaystyle 
L(\mathbf{x},t,\theta) \frac{ {u^1}^* }{K_N+|{u^1}^*|} u^2-K_r u^2 - K_{mf}u^2- K_z\frac{{u^2}^*}{K_F+|{u^2}^*|} u^3 \\
\displaystyle
C_{fz}K_z \frac{ {u^2}^*}{K_F+|{u^2}^*|} u^3-K_{mz}u^3  \\
\displaystyle
K_{mf}u^2+K_{mz}u^3-K_{rd}D(\theta)u^4  \\
\displaystyle 
C_{oc}  L(\mathbf{x},t,\theta) \frac{ {u^1}^*}{K_N+|{u^1}^*|} u^2-C_{oc} K_r u^2 - C_{oc}K_{rd}D(\theta)u^4
\end{array} \right]
\end{eqnarray*}
\item $\mathbf{h}^i_{\mathbf{u}} \in [L^2(0,T)]^{N_{CT}}$, for $i=1,\ldots,5$, is such that:
\begin{equation}
h^{i,k}_{\mathbf{u}}(s)=\frac{1}{\mu(C^k)} \int_{C^k} u^i(s) \, d \gamma, \quad k=1,2,\ldots,N_{CT}. 
\end{equation}
\end{itemize}

{\color{red} \begin{remark} \label{obs:pbtipo} All the five equations of the decoupled system 
(\ref{eq:system3d1}) can be expressed in the way of the following generic equation:
\begin{equation}\label{eq:tipo}
\left\{\begin{array}{l}
\displaystyle
\frac{\partial w}{\partial t} + \mathbf{v} \cdot \nabla w - \nabla \cdot (\mu \nabla w)=k_1 w+k_2 \quad \mbox{in}\; \Omega \times (0,T), \\ 
\displaystyle
\frac{\partial w}{\partial \mathbf{n}}=k_3 \quad \mbox{on}\; \Gamma_1 \times (0,T), \\
\displaystyle 
w=0 \quad \mbox{on}\; \Gamma_2 \times (0,T), \\ 
w(0)=w^0 \quad \mbox{in}\; \Omega,
\end{array}
\right.
\end{equation}
where $\Gamma_1=\Gamma_S \cup \Gamma_C \cup \Gamma_N$, $\Gamma_2=\Gamma_T$, $w^0 \in H^1(\Omega)$, 
$k_1 \in L^4(0,T;L^3(\Omega))$, $k_2 \in L^2(0,T;L^{3/2}(\Omega))$ and $k_3 \in L^2(0,T;L^2(\Gamma_1))$. 

In one hand, $k_3 \in L^2(0,T;L^2(\Gamma_1))$ thanks to Lemma \ref{traza2} and Theorem 3.1 of \cite{fursikov1} for 
the trace operator $\gamma_1= \partial^1 / \partial \mathbf{n}^1$, 
as well $\|\gamma_1(\zeta_{{\mathbf{h}^{i*}_{\mathbf{u}}}})\|_{L^2(0,T;L^2(\Gamma))} \leq C_3 \|{\mathbf{h}^{i*}_{\mathbf{u}}}\|_{[L^2(0,T)]^{N_{CT}}}$, 
$\forall i=1,\ldots,5$. 

In the other hand, for the coefficients $k_1$ and $k_2$, we need to study the particular case for each one of the five species. 
We have the following lemma.
\end{remark}

\begin{lemma} We have the following estimates for the coefficients $ k_1 $ and $ k_2 $ associated to each species:
\begin{itemize}

\item Species $u^3$:
\begin{equation}
\begin{array}{l}
\displaystyle
\|k_1\|_{L^{\infty}(0,T;L^{\infty}(\Omega))} \leq \displaystyle C_1, \\ 
\displaystyle
\|k_2\|_{L^{2}(0,T;L^{2}(\Omega))} \leq
\displaystyle C_2(\|{\mathbf{h}_{\mathbf{u}}^{3*}}\|_{[L^2(0,T)]^{N_{CT}}}\\
\displaystyle \qquad +\|\mathbf{v}\|_{L^2(0,T;[L^3(\Omega)]^3)}\|{\mathbf{h}_{\mathbf{u}}^{3*}}\|_{[L^2(0,T)]^{N_{CT}}} ).
\end{array}
\end{equation}

\item Species $u^2$:
\begin{equation}
\begin{array}{l}
\displaystyle
\|k_1\|_{L^4(0,T;L^3(\Omega))} \leq C_1(1+\|\theta\|_{L^4(0,T;L^3(\Omega))}), \\ 
\displaystyle
\|k_2\|_{L^2(0,T;L^{2}(\Omega))} \leq C_2 (\|u^3\|_{L^2(0,T;L^{2}(\Omega))}\\ 
\displaystyle \qquad +  \|\theta\|_{L^2(0,T;L^2(\Omega))} \|{\mathbf{h}_{\mathbf{u}}^{2*}}\|_{[L^2(0,T)]^{N_{CT}}}
 \\
\displaystyle
\qquad +
\|\mathbf{v}\|_{L^2(0,T;[L^3(\Omega)]^3)}\|{\mathbf{h}_{\mathbf{u}}^{2*}}\|_{[L^2(0,T)]^{N_{CT}}}
+\|{\mathbf{h}_{\mathbf{u}}^{2*}}\|_{[L^2(0,T)]^{N_{CT}}}).
\end{array}
\end{equation}

\item Species $u^4$:
\begin{equation}
\begin{array}{l}
\displaystyle
\|k_1\|_{L^4(0,T;L^3(\Omega))} \leq C_1(1+\|\theta\|_{L^4(0,T;L^3(\Omega))}), \\ 
\displaystyle
\|k_2\|_{L^2(0,T;L^{2}(\Omega))} \leq C_2 (\|u^2\|_{L^2(0,T;L^{2}(\Omega))}+ \|u^3\|_{L^2(0,T;L^{2}(\Omega))} \\ 
\displaystyle \qquad  + \|\theta\|_{L^2(0,T;L^2(\Omega))} \|{\mathbf{h}_{\mathbf{u}}^{4*}}\|_{[L^2(0,T)]^{N_{CT}}} \\
\displaystyle \qquad +\|\mathbf{v}\|_{L^2(0,T;[L^3(\Omega)]^3)}\|{\mathbf{h}_{\mathbf{u}}^{4*}}\|_{[L^2(0,T)]^{N_{CT}}} 
+\|{\mathbf{h}_{\mathbf{u}}^{4*}}\|_{[L^2(0,T)]^{N_{CT}}}).
\end{array}
\end{equation}

\item Species $u^1$, $k_1=0$ and:
\begin{equation}
\begin{array}{l}
\displaystyle 
\|k_2 \|_{L^2(0,T;L^{3/2}(\Omega)} \leq C_2 (\|u^2\|_{L^2(0,T;L^{3/2}(\Omega))}+ \|u^4\|_{L^2(0,T;L^{3/2}(\Omega))}\\ 
\displaystyle
\qquad  +\|u^4\|_{L^4(0,T;L^{3}(\Omega))} \|\theta\|_{L^4(0,T;L^{3}(\Omega))} \\ 
\displaystyle
\qquad +\|u^2\|_{L^4(0,T;L^{3}(\Omega))} \|\theta\|_{L^4(0,T;L^{3}(\Omega))}\\
\displaystyle \qquad + 
\|\mathbf{v}\|_{L^2(0,T;[L^2(\Omega)]^3)}\|{\mathbf{h}_{\mathbf{u}}^{1*}}\|_{[L^2(0,T)]^{N_{CT}}} 
+\|{\mathbf{h}_{\mathbf{u}}^{1*}}\|_{[L^2(0,T)]^{N_{CT}}}) .
\end{array}
\end{equation}

\item Species $u^5$, $k_1=0$ and:
\begin{equation}
\begin{array}{l}
\displaystyle 
\|k_2 \|_{L^2(0,T;L^{3/2}(\Omega)} \leq C_2 ( \|u^2\|_{L^2(0,T;L^{3/2}(\Omega))} + \|u^4\|_{L^2(0,T;L^{3/2}(\Omega))} \\
\displaystyle 
\qquad  +\|u^4\|_{L^4(0,T;L^{3}(\Omega))} \|\theta\|_{L^4(0,T;L^{3}(\Omega))} \\ 
\displaystyle
\qquad +\|u^2\|_{L^4(0,T;L^{3}(\Omega))} \|\theta\|_{L^4(0,T;L^{3}(\Omega))}\\
\displaystyle
\qquad +
\|\mathbf{v}\|_{L^2(0,T;[L^2(\Omega)]^3)}\|{\mathbf{h}_{\mathbf{u}}^{1*}}\|_{[L^2(0,T)]^{N_{CT}}} 
+ \|{\mathbf{h}_{\mathbf{u}}^{5*}}\|_{[L^2(0,T)]^{N_{CT}}}).
\end{array}
\end{equation}
Where $C_1$ and $C_2$ are positive constants that depend on coefficients and data associated to problem 
(\ref{eq:system3}).
\end{itemize}

\end{lemma}

\begin{proof} We will follow the same order of resolution of the decoupled problem:
\begin{itemize}
\item Equation for $u^3$:
\begin{equation} \nonumber \hspace{-.3cm}
\begin{array}{l}
\displaystyle 
k_1 = C_{fz} K_z \frac{{u^{2}}^*}{K_F+|{u^{2}}^*|} - K_{mz}, \\ 
\displaystyle 
k_2 = -\frac{\partial \zeta_{{\mathbf{h}_{\mathbf{u}}^{3*}}}}{\partial t}
- \mathbf{v} \cdot \nabla \zeta_{{\mathbf{h}_{\mathbf{u}}^{3*}}}+\nabla \cdot (\mu^3 \nabla \zeta_{{\mathbf{h}_{\mathbf{u}}^{3*}}}) 
-K_{mz} \zeta_{{\mathbf{h}_{\mathbf{u}}^{3*}}}+
C_{fz} K_z \frac{{u^{2}}^*}{K_F+|{u^{2}}^*|} \zeta_{{\mathbf{h}_{\mathbf{u}}^{3*}}}.
\end{array}
\end{equation}
So, we have that $k_1 \in L^{\infty}(0,T;L^{\infty}(\Omega))$ and $k_2 \in L^{2}(0,T;L^{2}(\Omega))$. 
We also have the following estimates:
\begin{equation}
\begin{array}{l}
\displaystyle
\|k_1\|_{L^{\infty}(0,T;L^{\infty}(\Omega))} \leq \displaystyle C_1, \\ 
\displaystyle
\|k_2\|_{L^{2}(0,T;L^{2}(\Omega))} \leq
\displaystyle C_2(\|{\mathbf{h}_{\mathbf{u}}^{3*}}\|_{[L^2(0,T)]^{N_{CT}}}\\
\displaystyle \qquad +\|\mathbf{v}\|_{L^2(0,T;[L^3(\Omega)]^3)}\|{\mathbf{h}_{\mathbf{u}}^{3*}}\|_{[L^2(0,T)]^{N_{CT}}} ),
\end{array}
\end{equation}
where we have used $\| \zeta_{{\mathbf{h}_{\mathbf{u}}^{3*}}}\|_{W^{1,2,2}(0,T;H^2(\Omega),H^2(\Omega))} 
\leq C(u^{0,3}) \|{\mathbf{h}_{\mathbf{u}}^{3*}}\|_{[L^2(0,T)]^{N_{CT}}}$, in particular, 
$\| \nabla \zeta_{{\mathbf{h}_{\mathbf{u}}^{3*}}}\|_{L^{\infty}(0,T; L^{6}(\Omega))} \leq C(u^{0,3})  
\|{\mathbf{h}_{\mathbf{u}}^{3*}}\|_{[L^2(0,T)]^{N_{CT}}}$
\item Equation for $u^2$:
\begin{equation} \nonumber
\begin{array}{l}
\displaystyle 
k_1 = L(\mathbf{x},t,\theta) \frac{{u^1}^*}{K_N+|{u^1}^*|} - (K_{mf}+K_r), \\ 
\displaystyle
k_2 = - K_z \frac{ {u^2}^*}{K_F+|{u^2}^*|} u^3 -\frac{\partial \zeta_{{\mathbf{h}_{\mathbf{u}}^{2*}}}}{\partial t} 
- \mathbf{v} \cdot \nabla \zeta_{{\mathbf{h}_{\mathbf{u}}^{2*}}} +\nabla \cdot (\mu^2 \nabla \zeta_{{\mathbf{h}_{\mathbf{u}}^{2*}}}) \\ 
\displaystyle
\qquad -(K_{mf}+Kr) \zeta_{{\mathbf{h}_{\mathbf{u}}^{2*}}}
+L(\mathbf{x},t,\theta) \frac{{u^1}^*}{K_N+|{u^1}^*|} \zeta_{{\mathbf{h}_{\mathbf{u}}^{2*}}}.
\end{array}
\end{equation}
In this case, the regularity of the term $k_1$ is imposed by the regularity of the term $L(\mathbf{x},t,\theta)$. 
In particular, we have $W_2 \subset L^2(0,T;L^6(\Omega)) \cap L^{\infty}(0,T; L^2(\Omega))$ $ \subset L^{4}(0,T;L^3(\Omega))$ 
and then, $k_1 \in L^4(0,T;L^3(\Omega))$. 
In the other hand, it is clear that $k_2 \in L^2(0,T;L^{2}(\Omega))$. Finally, we have the following estimates:
\begin{equation}
\begin{array}{l}
\displaystyle
\|k_1\|_{L^4(0,T;L^3(\Omega))} \leq C_1(1+\|\theta\|_{L^4(0,T;L^3(\Omega))}), \\ 
\displaystyle
\|k_2\|_{L^2(0,T;L^{2}(\Omega))} \leq C_2 (\|u^3\|_{L^2(0,T;L^{2}(\Omega))}\\ 
\displaystyle \qquad +  \|\theta\|_{L^2(0,T;L^2(\Omega))} \|{\mathbf{h}_{\mathbf{u}}^{2*}}\|_{[L^2(0,T)]^{N_{CT}}}
 \\
\displaystyle
\qquad +
\|\mathbf{v}\|_{L^2(0,T;[L^3(\Omega)]^3)}\|{\mathbf{h}_{\mathbf{u}}^{2*}}\|_{[L^2(0,T)]^{N_{CT}}}
+\|{\mathbf{h}_{\mathbf{u}}^{2*}}\|_{[L^2(0,T)]^{N_{CT}}}),
\end{array}
\end{equation}
where we have used $\| \zeta_{{\mathbf{h}_{\mathbf{u}}^{2*}}}\|_{W^{1,2,2}(0,T;H^2(\Omega),H^2(\Omega))} 
\leq C(u^{0,2}) \|{\mathbf{h}_{\mathbf{u}}^{2*}}\|_{[L^2(0,T)]^{N_{CT}}}$, 
in particular, $\| \zeta_{{\mathbf{h}_{\mathbf{u}}^{2*}}}\|_{L^{\infty}(0,T; L^{\infty}(\Omega))},\,
\| \nabla \zeta_{{\mathbf{h}_{\mathbf{u}}^{2*}}}\|_{L^{\infty}(0,T; L^{6}(\Omega))}
 \leq C(u^{0,2})  
\|{\mathbf{h}_{\mathbf{u}}^{2*}}\|_{[L^2(0,T)]^{N_{CT}}}$.
\item Equation for $u^4$:
\begin{equation} \nonumber
\begin{array}{l}
\displaystyle 
k_1 = - K_{rd} D(\theta), \\ 
\displaystyle
k_2 = K_{mf} u^2 + K_{mz}u^3- K_{rd} D(\theta)\zeta_{{\mathbf{h}_{\mathbf{u}}^{4*}}} 
-\frac{\partial \zeta_{{\mathbf{h}_{\mathbf{u}}^{4*}}}}{\partial t} - \mathbf{v} \cdot \nabla \zeta_{{\mathbf{h}_{\mathbf{u}}^{4*}}}
+\nabla \cdot (\mu^4 \nabla \zeta_{{\mathbf{h}_{\mathbf{u}}^{4*}}}).
\end{array}
\end{equation}
The regularity of the term $k_1$ is given by the regularity of the water temperature. So, $k_1 \in L^4(0,T;L^3(\Omega))$. 
In the case of $k_2$ we are in the same situation as above 
and then $k_2 \in L^2(0,T;L^2(\Omega))$. We also have the 
following estimates:
\begin{equation}
\begin{array}{l}
\displaystyle
\|k_1\|_{L^4(0,T;L^3(\Omega))} \leq C_1(1+\|\theta\|_{L^4(0,T;L^3(\Omega))}), \\ 
\displaystyle
\|k_2\|_{L^2(0,T;L^{2}(\Omega))} \leq C_2 (\|u^2\|_{L^2(0,T;L^{2}(\Omega))}+ \|u^3\|_{L^2(0,T;L^{2}(\Omega))} \\ 
\displaystyle \qquad  + \|\theta\|_{L^2(0,T;L^2(\Omega))} \|{\mathbf{h}_{\mathbf{u}}^{4*}}\|_{[L^2(0,T)]^{N_{CT}}} \\
\displaystyle \qquad +\|\mathbf{v}\|_{L^2(0,T;[L^3(\Omega)]^3)}\|{\mathbf{h}_{\mathbf{u}}^{4*}}\|_{[L^2(0,T)]^{N_{CT}}} 
+\|{\mathbf{h}_{\mathbf{u}}^{4*}}\|_{[L^2(0,T)]^{N_{CT}}}).
\end{array}
\end{equation}
\item Equation for $u_1$:
\begin{equation} \nonumber
\begin{array}{l}
\displaystyle 
k_1 = 0, \\ 
\displaystyle
k_2 = C_{nc} K_r u^2 + C_{nc} K_{rd} D(\theta) u^4 - C_{nc} L(\mathbf{x},t,\theta)  \frac{{u^1}^*}{K_N+|{u^1}^*|} u^2 \\
\displaystyle  
\qquad -\frac{\partial \zeta_{{\mathbf{h}_{\mathbf{u}}^{1*}}}}{\partial t} - \mathbf{v} \cdot \nabla \zeta_{{\mathbf{h}_{\mathbf{u}}^{1*}}}
+\nabla \cdot (\mu^1 \nabla \zeta_{{\mathbf{h}_{\mathbf{u}}^{1*}}}).
\end{array}
\end{equation}
In the term $k_2$, the most restrictive regularity is determined by the product of two functions ($u^4$ and $\theta$) lying in the space $L^4(0,T;L^3(\Omega))$. So, $k_2 \in L^2(0,T;L^{3/2}(\Omega))$. Besides, we have the following estimate:
\begin{equation}
\begin{array}{l}
\displaystyle 
\|k_2 \|_{L^2(0,T;L^{3/2}(\Omega)} \leq C_2 (\|u^2\|_{L^2(0,T;L^{3/2}(\Omega))}+ \|u^4\|_{L^2(0,T;L^{3/2}(\Omega))}\\ 
\displaystyle
\qquad  +\|u^4\|_{L^4(0,T;L^{3}(\Omega))} \|\theta\|_{L^4(0,T;L^{3}(\Omega))} \\ 
\displaystyle
\qquad +\|u^2\|_{L^4(0,T;L^{3}(\Omega))} \|\theta\|_{L^4(0,T;L^{3}(\Omega))}\\
\displaystyle \qquad + 
\|\mathbf{v}\|_{L^2(0,T;[L^2(\Omega)]^3)}\|{\mathbf{h}_{\mathbf{u}}^{1*}}\|_{[L^2(0,T)]^{N_{CT}}} 
+\|{\mathbf{h}_{\mathbf{u}}^{1*}}\|_{[L^2(0,T)]^{N_{CT}}}) .
\end{array}
\end{equation}
\item Equation for $u_5$:
\begin{equation}
\begin{array}{l} \nonumber
\displaystyle 
k_1 = 0, \\ 
\displaystyle 
k_2= -C_{oc} K_r u^2 - C_{oc}K_{rd}D(\theta)u^4 +C_{oc}  L(\mathbf{x},t,\theta) \frac{ {u^1}^*}{K_N+|{u^1}^*|} u^2 \\ 
\displaystyle 
\qquad -\frac{\partial \zeta_{{\mathbf{h}_{\mathbf{u}}^{5*}}}}{\partial t} - \mathbf{v} \cdot \nabla \zeta_{{\mathbf{h}_{\mathbf{u}}^{5*}}}
+\nabla \cdot (\mu^5 \nabla \zeta_{{\mathbf{h}_{\mathbf{u}}^{5*}}}).
\end{array}
\end{equation}
We are in the same situation as above, in particular, $k_2 \in L^2(0,T;L^{3/2}(\Omega))$, 
and we also have the following estimate:
\begin{equation}
\begin{array}{l}
\displaystyle 
\|k_2 \|_{L^2(0,T;L^{3/2}(\Omega)} \leq C_2 ( \|u^2\|_{L^2(0,T;L^{3/2}(\Omega))} + \|u^4\|_{L^2(0,T;L^{3/2}(\Omega))} \\
\displaystyle 
\qquad  +\|u^4\|_{L^4(0,T;L^{3}(\Omega))} \|\theta\|_{L^4(0,T;L^{3}(\Omega))} \\ 
\displaystyle
\qquad +\|u^2\|_{L^4(0,T;L^{3}(\Omega))} \|\theta\|_{L^4(0,T;L^{3}(\Omega))}\\
\displaystyle
\qquad +
\|\mathbf{v}\|_{L^2(0,T;[L^2(\Omega)]^3)}\|{\mathbf{h}_{\mathbf{u}}^{1*}}\|_{[L^2(0,T)]^{N_{CT}}} 
+ \|{\mathbf{h}_{\mathbf{u}}^{5*}}\|_{[L^2(0,T)]^{N_{CT}}}).
\end{array}
\end{equation} \qed
\end{itemize}
\end{proof}}

Now we can state the following existence result for the generic equation (\ref{eq:tipo}). 
The proof of this result can be done using techniques analogous to those presented in \cite{fran1}:

\begin{theorem} \label{teo:pbtipo} For any given elements $\mathbf{v}\in \mathbf{W}_3$, 
$w_0 \in H^1(\Omega)$, $k_1 \in L^4(0,T;L^3(\Omega))$, 
$k_2 \in L^2(0,T;L^{3/2}(\Omega))$ and
$k_3 \in L^2(0,T;L^2(\Gamma_1))$, there exists a unique 
element $w \in W^{1,2,2}(0,T;H^1_{0,\Gamma_2}(\Omega),
{H^1_{0,\Gamma_2}(\Omega)}') 
\cap L^{\infty}(0,T;L^2(\Omega))$, with $w(0)=w^0$ a.e. 
$x \in \Omega$, that satisfies the following variational formulation:
\begin{equation} \label{eq:soltipo}
\begin{array}{r}
\displaystyle 
\int_{\Omega} \frac{\partial w}{\partial t} \eta \,d \mathbf{x} +\int_{\Omega} \mathbf{v}\cdot \nabla w \eta \, dx+
 \int_{\Omega} \mu \nabla w \cdot \nabla \eta \, d \mathbf{x} =
\int_{\Omega} k_1 w \eta \,d \mathbf{x}   \\ 
\displaystyle +\int_{\Omega} k_2 \eta \, d \mathbf{x}
+\int_{\Gamma_1} 
k_3 \eta \, d \gamma, \quad {\rm a.e.} \, t \in (0,T),\quad \forall \eta \in H^1_{0,\Gamma_2}(\Omega),
\end{array}
\end{equation}
where $ H^1_{0,\Gamma_2}(\Omega)=\{\eta \in H^1(\Omega):\; \eta_{|_{\Gamma_2}}=0\}$. The 
solution also verifies the following estimate:
\begin{equation} \label{eq:acotatipo} \hspace{-.1cm}
\begin{array}{r}
\|w\|_{W^{1,2,2}(0,T;H^1_{0,\Gamma_0}(\Omega),
{H^1_{0,\Gamma_0}(\Omega)}')} \leq C(\|k_1\|_{L^4(0,T;L^3(\Omega))},\|w^0\|_{H^1(\Omega)},
\|\mathbf{v}\|_{\mathbf{W}_1}) \\ 
\displaystyle
\times \bigg[ 1+ \|k_2\|_{L^2(0,T;L^{3/2}(\Omega)}+ \|k_3\|_{L^2(0,T;L^{2}(\Gamma_1))} \bigg],
\end{array}
\end{equation}
where $C$ is a positive constant depending on $k_1$, $w^0$ and $\mathbf{v}$.   \qed
\end{theorem}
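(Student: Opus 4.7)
The plan is to prove existence via a standard Faedo--Galerkin scheme combined with \emph{a priori} estimates and a Grönwall argument, and to obtain uniqueness from the linearity of the equation together with the same energy estimate applied to the difference of two solutions.

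First I would choose a Hilbertian basis $\{\eta_j\}_{j\geq 1}$ of $H^1_{0,\Gamma_2}(\Omega)$ (for instance eigenfunctions of the Laplacian with the mixed Dirichlet/Neumann conditions), set $V_m=\mathrm{span}\{\eta_1,\ldots,\eta_m\}$, and seek approximate solutions $w_m(t)=\sum_j c^m_j(t)\eta_j$ satisfying the Galerkin formulation associated to~(\ref{eq:soltipo}). Because the problem is \emph{linear} in $w$ with coefficient $k_1\in L^4(0,T;L^3(\Omega))$ and forcing terms $k_2$, $k_3$ of the stated regularity, the Galerkin system is an ODE with $L^1(0,T)$ coefficients, so Carathéodory's theorem provides a unique $w_m$ defined on all of $[0,T]$ once a uniform $L^2$ bound is available.

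Next I would derive that uniform bound by testing with $w_m$ itself. Thanks to $\nabla\cdot\mathbf v=0$, integration by parts turns the convection term into a boundary integral $\tfrac12\int_{\partial\Omega}(\mathbf v\cdot\mathbf n)w_m^2\,d\gamma$, which is controlled on $\Gamma_1$ by the trace embedding $H^1(\Omega)\hookrightarrow L^4(\partial\Omega)$ together with $\mathbf v\in\mathbf W_1$, and which vanishes on $\Gamma_2=\Gamma_T$ since $w_m$ is there zero. For the reaction term I would write
\begin{equation*}
\Bigl|\int_\Omega k_1 w_m^2\,d\mathbf x\Bigr|\leq \|k_1\|_{L^3}\|w_m\|_{L^3}^2
\leq C\|k_1\|_{L^3}\|w_m\|_{L^2}\|w_m\|_{H^1}
\leq \varepsilon\|\nabla w_m\|_{L^2}^2+C_\varepsilon\|k_1\|_{L^3}^2\|w_m\|_{L^2}^2,
\end{equation*}
using the 3D interpolation $\|w\|_{L^3}\leq C\|w\|_{L^2}^{1/2}\|w\|_{H^1}^{1/2}$. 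The forcing terms are handled analogously: $\int k_2 w_m\leq \varepsilon\|w_m\|_{H^1}^2+C_\varepsilon\|k_2\|_{L^{3/2}}^2$ via $H^1\hookrightarrow L^3$, and $\int_{\Gamma_1}k_3 w_m\leq \varepsilon\|w_m\|_{H^1}^2+C_\varepsilon\|k_3\|_{L^2(\Gamma_1)}^2$ via the trace inequality. Absorbing the $\varepsilon$-terms into the elliptic contribution and invoking Grönwall's lemma with the $L^1(0,T)$ weight $\|k_1(\cdot)\|_{L^3}^2$ (which belongs to $L^1$ since $k_1\in L^4(0,T;L^3(\Omega))$) yields the $L^\infty(0,T;L^2(\Omega))\cap L^2(0,T;H^1_{0,\Gamma_2}(\Omega))$ bound with precisely the constant structure displayed in~(\ref{eq:acotatipo}).

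The estimate for $\partial_t w_m$ in $L^2(0,T;H^1_{0,\Gamma_2}(\Omega)')$ follows directly from the equation: testing against $\eta\in H^1_{0,\Gamma_2}(\Omega)$ with $\|\eta\|_{H^1}\leq 1$, each of the terms $\mathbf v\cdot\nabla w_m$, $\mu\nabla w_m$, $k_1 w_m$, $k_2$, $k_3$ is estimated by Hölder and the already-established bounds. With these uniform estimates I would extract a subsequence converging weakly in $L^2(0,T;H^1_{0,\Gamma_2}(\Omega))$, weakly-$*$ in $L^\infty(0,T;L^2(\Omega))$, and whose time derivative converges weakly in $L^2(0,T;H^1_{0,\Gamma_2}(\Omega)')$; a Lions--Aubin argument provides strong $L^2(0,T;L^2(\Omega))$ convergence, which suffices to pass to the limit in every term (all of which are linear in $w$), and the initial condition is recovered through the continuous embedding of $W^{1,2,2}$ into $\mathcal C([0,T];L^2(\Omega))$. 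Uniqueness is immediate: the difference $w=w_1-w_2$ of two solutions satisfies the same equation with $k_2\equiv 0$, $k_3\equiv 0$, $w^0=0$, so the energy estimate and Grönwall force $w\equiv 0$. I expect the main technical nuisance to be the boundary convection term, since $\mathbf v\cdot\mathbf n$ does not vanish on $\Gamma_1$; however, the $\mathbf W_1$-regularity of $\mathbf v$ combined with the $L^4(\partial\Omega)$ trace inequality in 3D is precisely what keeps this term absorbable.
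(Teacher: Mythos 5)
Your Faedo--Galerkin/energy-estimate/Gr\"onwall argument is correct and is essentially the approach the paper intends: the paper gives no proof of this theorem, only the remark that it ``can be done using techniques analogous to those presented in \cite{fran1}'', and those techniques are precisely the Galerkin scheme with the interpolation $\|w\|_{L^3(\Omega)}\leq C\|w\|_{L^2(\Omega)}^{1/2}\|w\|_{H^1(\Omega)}^{1/2}$, the absorption of the $\varepsilon$-terms into the diffusion, and the $L^1(0,T)$ Gr\"onwall weight $\|k_1(\cdot)\|_{L^3(\Omega)}^2$ that you describe, followed by weak compactness and the linearity-based uniqueness. The only point worth flagging is that you correctly reinstate the convection term $\int_\Omega \mathbf v\cdot\nabla w\,\eta\,d\mathbf x$, which is present in the generic equation (\ref{eq:tipo}) (and explains the dependence of the constant on $\|\mathbf v\|_{\mathbf W_1}$) but was dropped from the displayed formulation (\ref{eq:soltipo}); your handling of the resulting boundary integral on $\Gamma_1$, where $\mathbf v\cdot\mathbf n$ is supported on $\Gamma_C$ and is bounded in terms of $g(t)$, is the right way to keep that term absorbable.
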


Thus, thanks to Remark \ref{obs:pbtipo} and Theorem \ref{teo:pbtipo}, we have the following estimates:

\begin{lemma} \label{remark4} A solution $\mathbf{u}$ of the uncoupled system (\ref{eq:system3d1}) verifies the following:
\begin{itemize}
\item Estimates for $u^3$ and $\mathbf{h}^3_{\mathbf{u}}$:
\begin{eqnarray}
&& \displaystyle
\|u^3\|_{W^{1,2,2}(0,T;H^1(\Omega), H^1(\Omega)')} \leq C_3(\|\mathbf{v}\|_{\mathbf{W}_1}) 
\bigg[1+\|{\mathbf{h}^3_{\mathbf{u}}}^*\|_{[L^2(0,T)]^{N_{CT}}} \bigg], \label{eq:estimate3a} \\
&& \displaystyle
\|\mathbf{h}^3_{\mathbf{u}}\|_{[L^2(0,T)]^{N_{CT}}} \leq C_8(\|\mathbf{v}\|_{\mathbf{W}_1}) 
\bigg[1+\|{\mathbf{h}^3_{\mathbf{u}}}^*\|_{[L^2(0,T)]^{N_{CT}}} \bigg], \label{eq:estimate3b}
\end{eqnarray}
where the constants $C_3$ and $C_8$ also depend on the initial condition $\|u^{0,3}\|_{H^1(\Omega)}$.
\item Estimates for $u^2$ and $\mathbf{h}^2_{\mathbf{u}}$:
\begin{eqnarray} 
&& \displaystyle 
\|u^2\|_{W^{1,2,2}(0,T;H^1(\Omega),H^1(\Omega)')} \leq C_2(\|\mathbf{v}\|_{\mathbf{W}_1}, \|\theta\|_{W_2}) \nonumber \\
&&\displaystyle
\qquad \times \bigg[1+\|{\mathbf{h}^2_{\mathbf{u}}}^*\|_{[L^2(0,T)]^{N_{CT}}} + \|{\mathbf{h}^3_{\mathbf{u}}}^*\|_{[L^2(0,T)]^{N_{CT}}}\bigg], \label{eq:estimate2a} \\
&&\displaystyle 
\|\mathbf{h}^2_{\mathbf{u}}\|_{[L^2(0,T)]^{N_{CT}}} \leq C_7(\|\mathbf{v}\|_{\mathbf{W}_1}, \|\theta\|_{W_2}) \nonumber \\
&&\displaystyle
\qquad \times \bigg[1+\|{\mathbf{h}^2_{\mathbf{u}}}^*\|_{[L^2(0,T)]^{N_{CT}}} + \|{\mathbf{h}^3_{\mathbf{u}}}^*\|_{[L^2(0,T)]^{N_{CT}}}\bigg], \label{eq:estimate2b}
\end{eqnarray}
where constants $C_2$ and $C_7$ also depend on the initial conditions $\|u^{0,2}\|_{H^1(\Omega)}$ and $\|u^{0,3}\|_{H^1(\Omega)}$.
\item Estimates for $u^4$ and $\mathbf{h}^4_{\mathbf{u}}$:
\begin{eqnarray}
&& \hspace{-.3cm} \displaystyle
\|u^4\|_{W^{1,2,2}(0,T;H^1(\Omega),H^1(\Omega)')} \leq C_4(\|\mathbf{v}\|_{\mathbf{W}_1}, \|\theta\|_{W_2}) \nonumber \\
&&\displaystyle
\times \bigg[1+\|{\mathbf{h}^2_{\mathbf{u}}}^*\|_{[L^2(0,T)]^{N_{CT}}} +
\|{\mathbf{h}^3_{\mathbf{u}}}^*\|_{[L^2(0,T)]^{N_{CT}}} + \|{\mathbf{h}^4_{\mathbf{u}}}^*\|_{[L^2(0,T)]^{N_{CT}}} \bigg], \label{eq:estimate4a} \\
&& \hspace{-.3cm} \displaystyle
\|\mathbf{h}^4_{\mathbf{u}}\|_{[L^2(0,T)]^{N_{CT}}} \leq C_9(\|\mathbf{v}\|_{\mathbf{W}_1}, \|\theta\|_{W_2}) \nonumber \\
&&\displaystyle
\times \bigg[1+\|{\mathbf{h}^2_{\mathbf{u}}}^*\|_{[L^2(0,T)]^{N_{CT}}} +
\|{\mathbf{h}^3_{\mathbf{u}}}^*\|_{[L^2(0,T)]^{N_{CT}}}+\|{\mathbf{h}^4_{\mathbf{u}}}^*\|_{[L^2(0,T)]^{N_{CT}}} \bigg], \label{eq:estimate4b}
\end{eqnarray}
where the constants $C_4$ and $C_9$ also depend on the initial conditions $\|u^{0,2}\|_{H^1(\Omega)}$, $\|u^{0,3}\|_{H^1(\Omega)}$ 
and $\|u^{0,4}\|_{H^1(\Omega)}$.
\item Estimates for $u^1$ and $\mathbf{h}^1_{\mathbf{u}}$:
\begin{eqnarray}
&& \displaystyle
\|u^1\|_{W^{1,2,2}(0,T;H^1(\Omega),H^1(\Omega)')} \leq C_1(\|\mathbf{v}\|_{\mathbf{W}_1}, \|\theta\|_{W_2}) 
\bigg[1+\|{\mathbf{h}^1_{\mathbf{u}}}^*\|_{[L^2(0,T)]^{N_{CT}}} \nonumber \\
&& \displaystyle 
\quad + \|{\mathbf{h}^2_{\mathbf{u}}}^*\|_{[L^2(0,T)]^{N_{CT}}} + \|{\mathbf{h}^3_{\mathbf{u}}}^*\|_{[L^2(0,T)]^{N_{CT}}} 
+\|{\mathbf{h}^4_{\mathbf{u}}}^*\|_{[L^2(0,T)]^{N_{CT}}} \bigg], \label{eq:estimate1a} \\
&& \displaystyle 
\|\mathbf{h}^1_{\mathbf{u}}\|_{[L^2(0,T)]^{N_{CT}}} \leq C_6(\|\mathbf{v}\|_{\mathbf{W}_1}, \|\theta\|_{W_2})
\bigg[1+\|{\mathbf{h}^1_{\mathbf{u}}}^*\|_{[L^2(0,T)]^{N_{CT}}}\nonumber \\
&& \displaystyle 
\quad + \|{\mathbf{h}^2_{\mathbf{u}}}^*\|_{[L^2(0,T)]^{N_{CT}}} + \|{\mathbf{h}^3_{\mathbf{u}}}^*\|_{[L^2(0,T)]^{N_{CT}}} 
+\|{\mathbf{h}^4_{\mathbf{u}}}^*\|_{[L^2(0,T)]^{N_{CT}}} \bigg], \label{eq:estimate1b}
\end{eqnarray}
where constants $C_1$ and $C_6$ also depend on the initial conditions $\|u^{0,1}\|_{H^1(\Omega)}$, 
$\|u^{0,2}\|_{H^1(\Omega)}$, $\|u^{0,3}\|_{H^1(\Omega)}$ and $\|u^{0,4}\|_{H^1(\Omega)}$. 
\item Estimates for $u^5$ and $\mathbf{h}^5_{\mathbf{u}}$:
\begin{eqnarray}
&& \displaystyle
\|u^5\|_{W^{1,2,2}(0,T;H^1(\Omega),H^1(\Omega)')} \leq C_5(\|\mathbf{v}\|_{\mathbf{W}_1}, \|\theta\|_{W_2})
\bigg[1+\|{\mathbf{h}^2_{\mathbf{u}}}^*\|_{[L^2(0,T)]^{N_{CT}}} \nonumber \\
&& \displaystyle 
\quad + \|{\mathbf{h}^3_{\mathbf{u}}}^*\|_{[L^2(0,T)]^{N_{CT}}} + \|{\mathbf{h}^4_{\mathbf{u}}}^*\|_{[L^2(0,T)]^{N_{CT}}} 
+\|{\mathbf{h}^5_{\mathbf{u}}}^*\|_{[L^2(0,T)]^{N_{CT}}} \bigg],  \label{eq:estimate5a} \\
&& \displaystyle 
\|\mathbf{h}^5_{\mathbf{u}}\|_{[L^2(0,T)]^{N_{CT}}} \leq C_{10}(\|\mathbf{v}\|_{\mathbf{W}_1}, \|\theta\|_{W_2})
\bigg[1+\|{\mathbf{h}^2_{\mathbf{u}}}^*\|_{[L^2(0,T)]^{N_{CT}}} \nonumber \\
&& \displaystyle 
\quad + \|{\mathbf{h}^3_{\mathbf{u}}}^*\|_{[L^2(0,T)]^{N_{CT}}} + \|{\mathbf{h}^4_{\mathbf{u}}}^*\|_{[L^2(0,T)]^{N_{CT}}} 
+\|{\mathbf{h}^5_{\mathbf{u}}}^*\|_{[L^2(0,T)]^{N_{CT}}} \bigg], \label{eq:estimate5b}
\end{eqnarray}
where the constants $C_5$ and $C_{10}$ also depend on the initial conditions $\|u^{0,2}\|_{H^1(\Omega)}$, 
$\|u^{0,3}\|_{H^1(\Omega)}$, $\|u^{0,4}\|_{H^1(\Omega)}$ and $\|u^{0,5}\|_{H^1(\Omega)}$.     \qed
\end{itemize}
\end{lemma}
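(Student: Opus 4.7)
The plan is to prove the lemma by applying Theorem \ref{teo:pbtipo} sequentially to each of the five equations of (\ref{eq:system3d1}), following the resolution order $i = 3 \to 2 \to 4 \to 1 \to 5$ that was chosen precisely so that, at each step, the coefficients $k_1, k_2, k_3$ appearing in the generic problem (\ref{eq:tipo}) for species $i$ depend only on data, on previously solved species, and on the starred quantities $\mathbf{u}^*, \mathbf{h}^*_{\mathbf{u}}$. The regularity class of these coefficients, together with the explicit bounds in terms of $\|\mathbf{h}^{i*}_{\mathbf{u}}\|_{[L^2(0,T)]^{N_{CT}}}$, has already been established in Remark \ref{obs:pbtipo}.

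I would begin with the zooplankton equation ($i=3$), where $k_1 \in L^\infty(0,T;L^\infty(\Omega))$ is uniformly bounded and $\|k_2\|_{L^2(L^2)} \leq C\,\|\mathbf{h}^{3*}_{\mathbf{u}}\|_{[L^2(0,T)]^{N_{CT}}}$, while $\|k_3\|_{L^2(L^2)} \leq C\,\|\mathbf{h}^{3*}_{\mathbf{u}}\|_{[L^2(0,T)]^{N_{CT}}}$ thanks to the second part of Remark \ref{obs:pbtipo} and Lemma \ref{traza2}. Plugging these into (\ref{eq:acotatipo}) yields (\ref{eq:estimate3a}) after recovering $u^3 = w^3 + \zeta_{\mathbf{h}^{3*}_{\mathbf{u}}}$ and using the continuous extension bound from Lemma \ref{traza2}. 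For (\ref{eq:estimate3b}), a trace inequality on each collector $C^k \subset \partial\Omega$ gives
\begin{equation}
\|h^{3,k}_{\mathbf{u}}\|_{L^2(0,T)} \leq \frac{1}{\mu(C^k)^{1/2}}\,\|u^3\|_{L^2(0,T;L^2(C^k))} \leq C\,\|u^3\|_{L^2(0,T;H^1(\Omega))},
\end{equation}
which combined with (\ref{eq:estimate3a}) yields the desired bound on $\mathbf{h}^3_{\mathbf{u}}$.

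The same scheme is then iterated. For $u^2$, Remark \ref{obs:pbtipo} gives $k_1 \in L^4(L^3)$ with norm controlled by $\|\theta\|_{W_2}$, and $k_2 \in L^2(L^2)$ whose norm involves $\|u^3\|_{L^2(L^2)}$ and $\|\mathbf{h}^{2*}_{\mathbf{u}}\|$; using (\ref{eq:estimate3a}) to absorb $\|u^3\|$ produces the extra term in $\|\mathbf{h}^{3*}_{\mathbf{u}}\|$ that appears in (\ref{eq:estimate2a})--(\ref{eq:estimate2b}). The same procedure, with the previously obtained bounds inserted, gives the estimates for $u^4, u^1, u^5$ in turn; the trace inequality reproduces each $\mathbf{h}^i_{\mathbf{u}}$ estimate.

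The main difficulty lies in the equations for the nutrient $u^1$ and the dissolved oxygen $u^5$, where $k_2$ only belongs to $L^2(0,T;L^{3/2}(\Omega))$ because it contains products of two $W_2$-type functions (e.g.\ $u^4\theta$ and $u^2\theta$). Here it is essential that Theorem \ref{teo:pbtipo} is stated precisely with $k_2 \in L^2(L^{3/2})$ (not $L^2(L^2)$); one then uses the Hölder-type bound $\|u^j\theta\|_{L^2(L^{3/2})} \leq \|u^j\|_{L^4(L^3)}\|\theta\|_{L^4(L^3)}$ together with the continuous embedding $W_2 \hookrightarrow L^4(0,T;L^3(\Omega))$ (and the analogous embedding for $W^{1,2,2}(0,T;H^1(\Omega),H^1(\Omega)')$) to bring the right-hand side of (\ref{eq:acotatipo}) into the form asserted in (\ref{eq:estimate1a})--(\ref{eq:estimate5b}). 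Cascading the previous estimates for $u^2, u^3, u^4$ then yields the stated dependence on all four starred quantities $\mathbf{h}^{i*}_{\mathbf{u}}$, $i=1,\ldots,4$ (resp.\ $i=2,\ldots,5$), completing the proof.
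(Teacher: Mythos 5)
Your proposal is correct and follows exactly the route the paper intends: the lemma is obtained by feeding the coefficient bounds of Remark \ref{obs:pbtipo} into the generic estimate (\ref{eq:acotatipo}) of Theorem \ref{teo:pbtipo}, species by species in the order $3 \to 2 \to 4 \to 1 \to 5$, recovering $u^i = w^i + \zeta_{\mathbf{h}^{i*}_{\mathbf{u}}}$ via Lemma \ref{traza2} and deducing each $\mathbf{h}^i_{\mathbf{u}}$ bound from the corresponding $u^i$ bound by a trace inequality on the collectors. The cascading of previously obtained estimates to produce the dependence on the various $\|\mathbf{h}^{j*}_{\mathbf{u}}\|$ is likewise the argument the paper relies on.
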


\begin{remark}
We must to note here that all above estimates do not depend on the variable $\mathbf{u}^*$, since the dependence on 
$\mathbf{u}^*$ appears within terms of the form:
\begin{displaymath}
\frac{{u^k}^*}{K+|{u^k}^*|},
\end{displaymath}
with $K>0$, and those terms are bounded a.e. $(\mathbf{x},t)\in \Omega \times (0,T)$ by a constant independent on $\mathbf{u}^*$. \qed
\end{remark}

Now, we will prove the main result of this Section:

\begin{theorem}[Existence of solution for the eutrophication system] \label{existencesystem3} 
If there exist coefficients and data such that:
\begin{equation}
\begin{array}{rcl}
C_6(\|\mathbf{v}\|_{\mathbf{W}_1}, \|\theta\|_{W_2})& < & 1, \\
C_7(\|\mathbf{v}\|_{\mathbf{W}_1}, \|\theta\|_{W_2})& < & 1, \\
C_8(\|\mathbf{v}\|_{\mathbf{W}_1})  & < & 1, \\
C_9(\|\mathbf{v}\|_{\mathbf{W}_1}, \|\theta\|_{W_2})& < & 1, \\
C_{10}(\|\mathbf{v}\|_{\mathbf{W}_1}, \|\theta\|_{W_2})& < & 1,\\
\end{array}
\end{equation} 
for all $\mathbf{g} \in \mathcal{U}_{ad}$, then there will exist positive constants $\widetilde{C}_i$, $i=1,\ldots,10$, such that the operator 
$M_{\mathbf{u}}:\mathbf{B}_{\mathbf{u}} \rightarrow \mathbf{B}_{\mathbf{u}}$ defined by (\ref{puntofijo1}) 
has a fixed point, which is solution of the state system (\ref{eq:system1}), (\ref{eq:system2}) and (\ref{eq:system3}) in the sense of 
Definition \ref{definition1}, where:
\begin{equation} \label{eq:defBpfu}
\begin{array}{rcl}
\mathbf{B}_{\mathbf{u}}&=&\Big\{ (\mathbf{u},\mathbf{h}_{\mathbf{u}}) \in [L^{2}(0,T;L^2(\Omega))]^5\times [L^2(0,T)]^{5 \times N_{CT}} \, : \\ 
&& \|u^i\|_{L^2(0,T;L^2(\Omega))} \leq \widetilde{C}_i, \; \forall i=1,\ldots,5, \\ 
&& \|\mathbf{h}^i_{\mathbf{u}}\|_{[L^2(0,T)]^{N_{CT}}} \leq \widetilde{C}_{5+i},\; \forall i=1,\ldots,5 \Big\}.
\end{array}
\end{equation} 
\end{theorem}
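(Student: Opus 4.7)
The plan is to apply Schauder's fixed point theorem to the operator $M_{\mathbf{u}}$ defined in (\ref{puntofijo1}), restricted to the closed convex set $\mathbf{B}_{\mathbf{u}}$ of (\ref{eq:defBpfu}); the three ingredients to check are self-mapping, relative compactness of the image, and continuity. For self-mapping I would exploit the triangular structure of the estimates in Lemma~\ref{remark4}, which mirrors the resolution order $3 \to 2 \to 4 \to 1 \to 5$. Starting from (\ref{eq:estimate3b}), since $C_8<1$ I can take $\widetilde{C}_8 := C_8/(1-C_8)$, so that $C_8(1+\widetilde{C}_8) \le \widetilde{C}_8$. Plugging into (\ref{eq:estimate2b}) with $C_7<1$ produces $\widetilde{C}_7 := C_7(1+\widetilde{C}_8)/(1-C_7)$, and one proceeds sequentially to define $\widetilde{C}_9$, $\widetilde{C}_6$, $\widetilde{C}_{10}$ using the remaining smallness hypotheses. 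The radii $\widetilde{C}_1,\ldots,\widetilde{C}_5$ are then read off directly from the companion bounds (\ref{eq:estimate3a}), (\ref{eq:estimate2a}), (\ref{eq:estimate4a}), (\ref{eq:estimate1a}), (\ref{eq:estimate5a}) via the continuous embedding $W^{1,2,2}(0,T;H^1(\Omega),H^1(\Omega)') \hookrightarrow L^2(0,T;L^2(\Omega))$.

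For compactness I would invoke Aubin--Lions: the uniform bound of each $u^i$ in $W^{1,2,2}(0,T;H^1(\Omega),H^1(\Omega)')$ supplied by Lemma~\ref{remark4} gives precompactness of the $\mathbf{u}$-component of the image in $[L^2(0,T;L^2(\Omega))]^5$; compactness of the trace component $\mathbf{h}_{\mathbf{u}}$ in $[L^2(0,T)]^{5\times N_{CT}}$ follows from the compact embedding $W^{1,2,2}(0,T;H^1(\Omega),H^1(\Omega)') \hookrightarrow L^2(0,T;H^s(\Omega))$ for $1/2<s<1$, together with continuity of the trace onto each $C^k$ and the integration in (\ref{eq:h2}). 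For continuity I would pick a strongly convergent sequence $(\mathbf{u}^*_n,\mathbf{h}^*_{\mathbf{u},n}) \to (\mathbf{u}^*,\mathbf{h}^*_{\mathbf{u}})$ in $[L^2(0,T;L^2(\Omega))]^5\times[L^2(0,T)]^{5\times N_{CT}}$ and show that the associated solutions converge. The Michaelis--Menten saturation factors $u^{k*}_n/(K+|u^{k*}_n|)$ are bounded by $1$ and converge a.e.\ along a subsequence, so dominated convergence handles the reaction side; the compactness of $R_{\mathbf{h}}$ (Lemma~\ref{traza2}) handles the lifts $\zeta_{\mathbf{h}^{i*}_{\mathbf{u},n}}$; the a~priori bounds from Theorem~\ref{teo:pbtipo} yield weak convergence of each $u^i_n$ in $W^{1,2,2}(0,T;H^1(\Omega),H^1(\Omega)')$, which Aubin--Lions upgrades to strong convergence in $L^2(0,T;L^2(\Omega))$ sufficient for passing to the limit in the linear variational equations of (\ref{eq:system3d1}).

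The main obstacle I anticipate is passing to the limit in the products of different species inside $\widehat{\mathbf{A}}$, specifically the terms of type $L(\mathbf{x},t,\theta)\,u^2$, $D(\theta)\,u^4$ and $u^2\,u^3$: these involve factors that are only weakly convergent in $\mathbf{W}_3$ a priori, so the continuity argument must be carried out iteratively along the resolution order $3\to 2 \to 4 \to 1 \to 5$, upgrading each species to strong convergence (via Aubin--Lions applied to its own equation) before it is used as a coefficient in the next one. Once a fixed point $(\mathbf{u},\mathbf{h}_{\mathbf{u}}) \in \mathbf{B}_{\mathbf{u}}$ is produced, substituting $\mathbf{u}^* = \mathbf{u}$ and $\mathbf{h}^*_{\mathbf{u}} = \mathbf{h}_{\mathbf{u}}$ in (\ref{eq:system3d1}) collapses $\widehat{\mathbf{A}}$ to $\mathbf{A}$ and reproduces the boundary data (\ref{eq:h2}), so the pair is a solution of (\ref{eq:system3}) in the sense of Definition~\ref{definition1}; combined with the existence of $(\mathbf{v},\theta)$ from \cite{fran7,fran8}, this yields the full state system.
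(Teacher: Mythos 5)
Your proposal follows essentially the same route as the paper: Schauder's fixed point theorem on the closed convex set $\mathbf{B}_{\mathbf{u}}$, with the radii $\widetilde{C}_6,\ldots,\widetilde{C}_{10}$ built recursively from the triangular estimates of Lemma~\ref{remark4} under the smallness hypotheses (your recursive formulas, e.g.\ $\widetilde{C}_7 = C_7(1+\widetilde{C}_8)/(1-C_7)$, expand to exactly the paper's explicit constants $C_7/((1-C_7)(1-C_8))$, etc.), and compactness/continuity obtained from the compact embedding of $\mathbf{W}_3\times[H^1(0,T)]^{5\times N_{CT}}$ together with the boundedness of the Michaelis--Menten factors and Lemma~\ref{traza2}. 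The argument is correct and matches the paper's proof in all essentials.
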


\begin{proof} In order to apply the Schauder fixed point Theorem (see, for instance, Theorem 9.5 of \cite{conway1}), we will 
prove that the operator $M_{\mathbf{u}}$ is compact and that there exist positive constants $\widetilde{C}_i$, $i=1,\ldots,10$, such 
that the operator $M_{\mathbf{u}}$ maps elements from the set $\mathbf{B}_{\mathbf{u}}$ (which is closed and convex) into itself.

\begin{itemize}
\item The operator $M_{\mathbf{u}}$ is compact in the sense that 
it is continuous and $\overline{M_{\mathbf{u}}(A)}$ is compact whenever 
$A$ is a bounded subset of $[L^2(0,T;L^2(\Omega)]^5 \times [L^2(0,T)]^{5 \times N_{CT}}$:

\

In fact, given a convergent sequence $\{(\mathbf{u}^*_n,\mathbf{h}_n^*)\}_{n \in \mathbb{N}}\subset [L^2(0,T;L^2(\Omega)]^5 \times [L^2(0,T)]^{5 \times N_{CT}}$ 
such that $\mathbf{u}^*_n \rightarrow \mathbf{u}^*$ in $[L^2(0,T;L^2(\Omega))]^5$ and $\mathbf{h}_n^* \rightarrow \mathbf{h}_{\mathbf{u}}^*$ in $[L^2(0,T)]^{5 \times N_{CT}}$, 
we have that $(\mathbf{u}_n,\mathbf{h}_n) = M_{\mathbf{u}}(\mathbf{u}^*_n,\mathbf{h}_n^*)
\in \mathbf{W}_3 \times [H^1(0,T)]^{5 \times N_{CT}}$ is such that $\mathbf{u}_n=\mathbf{w}_n+\boldsymbol{\zeta}_{
\mathbf{h}_{n}^*}$, with $\mathbf{w}_n \in \widetilde{\mathbf{W}}_3$ the solution of the following variational formulation:
\begin{equation} \label{eq:system3d}
\begin{array}{r}
\displaystyle 
\int_{\Omega} \frac{\partial \mathbf{w}_n}{\partial t} \cdot \boldsymbol{\eta} \, d\mathbf{x} + 
\int_{\Omega} \nabla \mathbf{w}_n \mathbf{v} \cdot  \boldsymbol{\eta} \, d\mathbf{x} +
\Lambda_{\mu} \int_{\Omega}  \nabla \mathbf{w}_n:\nabla  \boldsymbol{\eta} \, d\mathbf{x} \\
\displaystyle 
= \int_{\Omega} \widehat{\mathbf{A}}(\theta,\mathbf{u}^*_n,\boldsymbol{\zeta}_{\mathbf{h}_n^*}+
\mathbf{w}_n) \cdot \boldsymbol{\eta} \, d \mathbf{x} +\int_{\Omega} \mathbf{H}_{n} \cdot  \boldsymbol{\eta} \, d \mathbf{x} \\ 
\displaystyle 
+\int_{\Gamma_S \cup \Gamma_N \cup \Gamma_C} \mathbf{g}_{n} \cdot \boldsymbol{\eta} \, d \gamma, 
\quad \mbox{a.e.} \ t \in (0,T), \quad \forall \boldsymbol{\eta} \in \widetilde{\mathbf{X}}_3,
\end{array}
\end{equation}
where:
\begin{equation}
\begin{array}{l}
\displaystyle H_{n}^i = \displaystyle -\frac{\partial \zeta_{{\mathbf{h}^*_n}^i}}{\partial t}
- \mathbf{v} \cdot \nabla \zeta_{{\mathbf{h}^*_n}^i}+\nabla \cdot (\mu^i \nabla \zeta_{{\mathbf{h}^*_n}^i}) \in L^2(0,T;L^2(\Omega)), \\
\displaystyle 
g_{n}^i = -\mu^i \frac{\partial \zeta_{{\mathbf{h}^*_{n}}^i}}{\partial \mathbf{n}} \in 
L^2(0,T;L^2(\Gamma_S \cup \Gamma_N \cup \Gamma_C)), \\ 
\displaystyle
h_n^{i,k} = \frac{1}{\mu(C^k)} \int_{C^k} u_n^i \, d \gamma \in H^1(0,T), \quad i=1,\ldots,5, \quad k=1,\ldots,N_{CT} .
\end{array}
\end{equation}
Using Lemma \ref{traza2}, we know that 
$\boldsymbol{\zeta}_{\mathbf{h}_{n}^*} \rightarrow \boldsymbol{\zeta}_{\mathbf{h}^*}$ in $W^{1,2,2}(0,T;[H^2(\Omega)]^5,[H^2(\Omega)]^5)$, 
$\mathbf{H}_{n} \rightarrow \mathbf{H}_{\mathbf{u}}$ in $[L^2(0,T;[L^2(\Omega))]^5$, 
and $\mathbf{g}_n \rightarrow \mathbf{g}_{\mathbf{u}}$ in $[L^2(0,T)]^{5\times N_{CT}}$. 
Thus, taking subsequences if necessary, we have that $\mathbf{u}_n^* \rightarrow \mathbf{u}^*$ a.e. $(x,t) \in \Omega \times (0,T)$, 
$\frac{\partial \mathbf{w}_n}{\partial t} \rightharpoonup \frac{\partial\mathbf{w}}{\partial t}$ weakly in $L^2(0,T;\widetilde{\mathbf{X}}_3')$, 
$\mathbf{w}_n \rightharpoonup \mathbf{w}$ weakly in $L^2(0,T;\widetilde{\mathbf{X}}_3)$, 
and $\mathbf{w}_n \rightarrow \mathbf{w}$ strongly in $[L^{10/3-\epsilon}(0,T,L^{10/3-\epsilon}(\Omega))]^5$, for all $\epsilon>0$. 
It is straightforward to prove, using previous convergences, that we can pass to the limit in variational formulation (\ref{eq:system3d}) 
obtaining that $M_{\mathbf{u}} (\mathbf{u}_n^*,\mathbf{h}^*_n) \rightarrow M_{\mathbf{u}}(\mathbf{u}^*,\mathbf{h}^*)$ in $[L^2(0,T;L^2(\Omega))]^5 
\times [L^2(0,T)]^{5 \times N_{CT}}$, with $(\mathbf{u},\mathbf{h}_{\mathbf{u}})=M_{\mathbf{u}}(\mathbf{u}^*,\mathbf{h}^*)$. 
We also derive that $\mathbf{u}=\mathbf{w}+\boldsymbol{\zeta}_{\mathbf{h}_{\mathbf{u}}^*} \in \mathbf{W}_3$, with $\mathbf{w} \in 
\widetilde{\mathbf{W}}_3$ the solution of the following variational formulation:
\begin{equation} \label{eq:system3e}
\begin{array}{r}
\displaystyle 
\int_{\Omega} \frac{\partial \mathbf{w}}{\partial t} \cdot \boldsymbol{\eta} \, d\mathbf{x} + 
\int_{\Omega} \nabla \mathbf{w} \mathbf{v} \cdot  \boldsymbol{\eta} \, d\mathbf{x} +
\Lambda_{\mu} \int_{\Omega}  \nabla \mathbf{w}:\nabla  \boldsymbol{\eta} \, d\mathbf{x} \\
\displaystyle 
= \int_{\Omega} \widehat{\mathbf{A}}(\theta,\mathbf{u}^*,\boldsymbol{\zeta}_{\mathbf{h}_{\mathbf{u}}^*}+
\mathbf{w}) \cdot \boldsymbol{\eta} \, d \mathbf{x} +\int_{\Omega} \mathbf{H} \cdot  \boldsymbol{\eta} \, d \mathbf{x} \\
\displaystyle 
+\int_{\Gamma_S \cup \Gamma_N \cup \Gamma_C} \mathbf{g}\cdot \boldsymbol{\eta} \, d \gamma, 
\quad \mbox{a.e.} \ t \in (0,T), \quad \forall \boldsymbol{\eta} \in \widetilde{\mathbf{X}}_3,
\end{array}
\end{equation}
where:
\begin{equation}
\begin{array}{l}
\displaystyle H^i = -\frac{\partial \zeta_{{\mathbf{h}^*_{\mathbf{u}}}^i}}{\partial t} - \mathbf{v} \cdot \nabla \zeta_{{\mathbf{h}^*_{\mathbf{u}}}^i}+
\nabla \cdot (\mu^i \nabla \zeta_{{\mathbf{h}^*_{\mathbf{u}}}^i}) \in L^2(0,T;L^2(\Omega)), \\
\displaystyle 
g_{n}^i = -\mu^i \frac{\partial \zeta_{{\mathbf{h}^*_{\mathbf{u}}}^i}}{\partial \mathbf{n}} \in 
L^2(0,T;L^2(\Gamma_S \cup \Gamma_N \cup \Gamma_C)), \quad i=1,\ldots,5.
\end{array} 
\end{equation}
Finally, the compactness of operator $M_{\mathbf{u}}$ is a direct consequence of the compact embedding of 
space $\mathbf{W}_3 \times [H^1(0,T)]^{5 \times N_{CT}}$ into space $[L^2(0,T;L^2(\Omega))]^5 \times [L^2(0,T)]^{5 \times N_{CT}}$.

\item There exist positive constants $\widetilde{C}_i$, $i=1,\ldots,10$, such that the operator $M_{\mathbf{u}}$ applies elements from the set 
$\mathbf{B}_{\mathbf{u}}$ into itself:

\

We only need to prove that, if we define the following constants:
\begin{equation} \hspace{-.6cm} 
\begin{array}{ll}
\widetilde{C}_{1} = \displaystyle \frac{C_1}{(1-C_6)(1-C_7)(1-C_8)(1-C_9)},
& \widetilde{C}_{6} = \displaystyle \frac{C_6}{(1-C_6)(1-C_7)(1-C_8)(1-C_9)}, \\
\widetilde{C}_{2} = \displaystyle \frac{C_2}{(1-C_7)(1-C_8)},
& \widetilde{C}_{7} = \displaystyle \frac{C_7}{(1-C_7)(1-C_8)}, \\
\widetilde{C}_{3} = \displaystyle \frac{C_3}{(1-C_8)},
& \widetilde{C}_{8} = \displaystyle \frac{C_8}{(1-C_8)}, \\
\widetilde{C}_{4} =  \displaystyle \frac{C_4}{(1-C_7)(1-C_8)(1-C_9)},
& \widetilde{C}_{9} = \displaystyle \frac{C_9}{(1-C_7)(1-C_8)(1-C_9)}, \\
\widetilde{C}_{5} = \displaystyle \frac{C_5}{(1-C_7)(1-C_8)(1-C_9)(1-C_{10})},
& \widetilde{C}_{10} = \displaystyle \frac{C_{10}}{(1-C_7)(1-C_8)(1-C_9)(1-C_{10})},
\end{array} \nonumber
\end{equation}
then $(\mathbf{u},\mathbf{h}_{\mathbf{u}})=M_{\mathbf{u}}(\mathbf{u}^*,\mathbf{h}_{\mathbf{u}}^*) \in \mathbf{B}_{\mathbf{u}}$, 
for all $(\mathbf{u}^*,\mathbf{h}_{\mathbf{u}}^*) \in \mathbf{B}_{\mathbf{u}}$. Indeed, given an element 
$(\mathbf{u}^*,\mathbf{h}_{\mathbf{u}}^*) \in \mathbf{B}_{\mathbf{u}}$, we have that, thanks to the estimates 
(\ref{eq:estimate3a})-(\ref{eq:estimate5b}), we can easily obtain by simple (but tedious) algebraic computations that 
$(\mathbf{u},\mathbf{h}_{\mathbf{u}})=M_{\mathbf{u}}(\mathbf{u}^*,\mathbf{h}_{\mathbf{u}}^*)$ satisfies the following estimates:
\begin{equation}
\begin{array}{l} \nonumber
\displaystyle
\|u^1\|_{W^{1,2,2}(0,T;H^1(\Omega),H^1(\Omega)')} \leq C_1 \bigg[1+ \frac{C_6}{(1-C_6)(1-C_7)(1-C_8)(1-C_9)} \\
\displaystyle
\quad +\frac{C_7}{(1-C_7)(1-C_8)} + \frac{C_8}{(1-C_8)}+ \frac{C_9}{(1-C_7)(1-C_8)(1-C_9)} \bigg] = \widetilde{C}_1, \\ 
\displaystyle
\|\mathbf{h}_{\mathbf{u}}^1\|_{[L^5(0,T)]^{N_{CT}}} \leq C_6 \bigg[1+\frac{C_6}{(1-C_6)(1-C_7)(1-C_8)(1-C_9)} \\
\displaystyle
\quad + \frac{C_7}{(1-C_7)(1-C_8)} + \frac{C_8}{(1-C_8)}+ \frac{C_9}{(1-C_7)(1-C_8)(1-C_9)}\bigg] = \widetilde{C}_6, \\ 
\displaystyle
\|u^2\|_{W^{1,2,2}(0,T;H^1(\Omega),H^1(\Omega)')} \leq C_2 \bigg[1+
\frac{C_7}{(1-C_7)(1-C_8)}+ \frac{C_8}{(1-C_8)} \bigg] = \widetilde{C}_2,  \\ 
\displaystyle
\|\mathbf{h}^2_{\mathbf{u}}\|_{[L^2(0,T)]^{N_{CT}}} \leq C_7 \bigg[1+ \frac{C_7}{(1-C_7)(1-C_8)}+\frac{C_8}{(1-C_8)} \bigg] = \widetilde{C}_7, \\
\displaystyle
\|u^3\|_{W^{1,2,2}(0,T;H^1(\Omega),H^1(\Omega)')} \leq C_3 \bigg[1+\frac{C_8}{(1-C_8)} \bigg] = \widetilde{C}_3, \\ 
\displaystyle
\|\mathbf{h}^3_{\mathbf{u}}\|_{[L^2(0,T)]^{N_{CT}}} \leq C_8 \bigg[1+\frac{C_8}{(1-C_8)} \bigg] = \widetilde{C}_8, \\
\displaystyle
\|u^4\|_{W^{1,2,2}(0,T;H^1(\Omega),H^1(\Omega)')} \leq C_4 \bigg[1+ \frac{C_7}{(1-C_7)(1-C_8)} \\
\displaystyle
\quad + \frac{C_8}{(1-C_8)} +\frac{C_9}{(1-C_7)(1-C_8)(1-C_9)} \bigg]=\widetilde{C}_4,  \\ 
\displaystyle
\|\mathbf{h}^4_{\mathbf{u}}\|_{[L^2(0,T)]^{N_{CT}}} \leq C_9 \bigg[1+ \frac{C_7}{(1-C_7)(1-C_8)} \\
\displaystyle
\quad + \frac{C_8}{(1-C_8)} + \frac{C_9}{(1-C_7)(1-C_8)(1-C_9)} \bigg]=\widetilde{C}_9, \\
\displaystyle
\|u^5\|_{W^{1,2,2}(0,T;H^1(\Omega),H^1(\Omega)')} \leq C_5 \bigg[
\frac{C_7}{(1-C_7)(1-C_8)}+ \frac{C_8}{(1-C_8)} \\
\displaystyle
\quad + \frac{C_9}{(1-C_7)(1-C_8)(1-C_9)} + \frac{C_{10}}{(1-C_7)(1-C_8)(1-C_9)(1-C_{10})} \bigg] = \widetilde{C}_5, 
\end{array}
\end{equation}
\begin{equation} \nonumber
\begin{array}{l}
\displaystyle
\|\mathbf{h}^5_{\mathbf{u}}\|_{[L^2(0,T)]^{N_{CT}}} \leq C_{10} \bigg[
\frac{C_7}{(1-C_7)(1-C_8)}+\frac{C_8}{(1-C_8)} \\
\displaystyle
\quad + \frac{C_9}{(1-C_7)(1-C_8)(1-C_9)} + \frac{C_{10}}{(1-C_7)(1-C_8)(1-C_9)(1-C_{10})} \bigg]=\widetilde{C}_{10}.
\end{array}
\end{equation}
Thus, $M_{\mathbf{u}}(\mathbf{u}^*,\mathbf{h}_{\mathbf{u}}^*) \in \mathbf{B}_{\mathbf{u}}$
\end{itemize}
Then, as a direct consequence of the Schauder Theorem, we obtain the existence of a fixed point 
$(\mathbf{u},\mathbf{h}_{\mathbf{u}}) \in \mathbf{B}_{\mathbf{u}}$, which is, from the construction of operator $M_{\mathbf{u}}$,
a solution of problem (\ref{eq:system3}).   \qed
\end{proof}

% Mathematical analysis of the control problem

\section{Mathematical analysis of the optimal control problem}

In this section we will prove the existence of solution for the optimal control problem $(\mathcal{P})$. 
It is important to remark here that, since we have not demonstrated the uniqueness of solution for 
the state systems (\ref{eq:system1}), (\ref{eq:system2}) and (\ref{eq:system3}), we will treat the problem as a 
multistate control problem (cf. \cite{fran9}). Thus, we define the set:
\begin{equation}
\begin{array}{rcl}
\mathcal{U}&=&\displaystyle
\big\{ (\mathbf{v},\theta,\mathbf{u},\mathbf{g}) \in  L^3(0,T;\mathbf{X}_1) \times 
L^2(0,T;X_2) \times L^2(0,T;\mathbf{X}_3) \times \mathcal{U}_{ad} \, : \\ 
&& \displaystyle
(\mathbf{v},\theta,\mathbf{u}) \; \text{is a solution of (\ref{eq:system1}), (\ref{eq:system2}) and (\ref{eq:system3}) associated to} \; \mathbf{g} , \\ 
&& \displaystyle 
{\rm verifying} \; \frac{1}{\mu(\Omega_C)} \int_{\Omega_C} u^5(t) \, d\mathbf{x} \in [\lambda^m,\lambda^M], \ \forall t \in [0,T] \big\},
\end{array}
\end{equation}
where the set of admissible controls $\mathcal{U}_{ad}$ is bounded, convex and closed (in particular, $\mathcal{U}_{ad}$ is weakly closed). 
We observe that the constraints in the sets $\mathcal{U}$ and $\mathcal{U}_{ad}$ are 
well defined since $g^k \in H^1(0,T) \subset \subset\mathcal{C}([0,T])$, $k=1,\ldots,N_{CT}$,
and ${u}^i \in \mathcal{C}([0,T]; L^2(\Omega))$, $i=1,\ldots,5$. Then, we prove the following property for the set $\mathcal{U}$.

\begin{lemma} The set $\mathcal{U}$ is weakly closed. 
\end{lemma}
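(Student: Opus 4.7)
The plan is to take an arbitrary sequence $\{(\mathbf{v}_n,\theta_n,\mathbf{u}_n,\mathbf{g}_n)\}_{n\in\mathbb{N}}\subset\mathcal{U}$ weakly convergent to some limit $(\mathbf{v},\theta,\mathbf{u},\mathbf{g})$ in the ambient space $L^3(0,T;\mathbf{X}_1)\times L^2(0,T;X_2)\times L^2(0,T;\mathbf{X}_3)\times[H^1(0,T)]^{N_{CT}}$, and verify that the limit lies in $\mathcal{U}$. First, since $\mathcal{U}_{ad}$ is bounded, convex and closed, hence weakly closed, we immediately obtain $\mathbf{g}\in\mathcal{U}_{ad}$. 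For the rest, I will extract a priori bounds: Theorem \ref{existencesystem3} together with Lemma \ref{remark4} provides, for each $n$, uniform (in $n$) bounds on $\mathbf{v}_n$ in $\mathbf{W}_1$, on $\theta_n$ in $W_2$, and on $\mathbf{u}_n$ in $\mathbf{W}_3$, since all constants ultimately depend only on data and on $\|\mathbf{g}_n\|_{[H^1(0,T)]^{N_{CT}}}\leq c\sqrt{N_{CT}}$.

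Once uniform bounds in $\mathbf{W}_1\times W_2\times\mathbf{W}_3$ are in place, I would extract (relabelled) subsequences with $\mathbf{v}_n\rightharpoonup\mathbf{v}$, $\theta_n\rightharpoonup\theta$, $\mathbf{u}_n\rightharpoonup\mathbf{u}$ weakly in their respective Sobolev--Bochner spaces and, by compactness of the embeddings (Aubin--Lions/Lions--Temam) associated to $\widetilde{\mathbf{W}}_1$, $\widetilde{W}_2$, $\widetilde{\mathbf{W}}_3$, strong convergence in $[L^2(0,T;[L^2(\Omega)]^3)]$, $L^2(0,T;L^2(\Omega))$, and $[L^2(0,T;L^2(\Omega))]^5$, plus a.e.\ pointwise convergence along a further subsequence. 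In parallel, applying Lemma \ref{traza1} and Lemma \ref{traza2} to the weakly/strongly convergent controls and boundary traces gives $\boldsymbol{\zeta}_{\mathbf{g}_n}\to\boldsymbol{\zeta}_{\mathbf{g}}$ in $W^{1,2,2}(0,T;[H_\sigma^2(\Omega)]^3,[H_\sigma^2(\Omega)]^3)$ (by linearity and continuity of $R_{\mathbf{v}}$) and analogous convergence of $\zeta_{\mathbf{h}_{\theta_n}}$, $\zeta_{\mathbf{h}^i_{\mathbf{u}_n}}$ (exploiting compactness of $R_{\mathbf{h}}$ together with continuity of the trace-averaging map $u\mapsto\mathbf{h}$).

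The technical core is then to pass to the limit in the three variational formulations (\ref{eq:system1c}), (\ref{eq:system2c}), (\ref{eq:system3c}) in order to conclude that $(\mathbf{v},\theta,\mathbf{u})$ solves the state systems associated to $\mathbf{g}$ in the sense of Definition \ref{definition1}. The linear and constant-in-time terms are trivial under weak convergence; the delicate points are the nonlinear ones: the convective terms $\nabla(\boldsymbol{\zeta}_{\mathbf{g}_n}+\mathbf{z}_n)\mathbf{z}_n$ and $\mathbf{v}_n\cdot\nabla\theta_n$, $\mathbf{v}_n\cdot\nabla u^i_n$ (handled via the strong $L^2(0,T;L^2(\Omega))$ convergence of $\mathbf{v}_n$ combined with the weak convergence of $\nabla\mathbf{z}_n,\nabla\theta_n,\nabla u^i_n$), the Smagorinsky term with its $3/2$-growth (treated by monotonicity/Minty-type arguments applied to the potential $D$), the Stefan--Boltzmann term $|\xi_n+\zeta_{\mathbf{h}_{\theta_n}}|^3(\xi_n+\zeta_{\mathbf{h}_{\theta_n}})$ on $\Gamma_S$ (passed to the limit via a.e.\ convergence on $\Gamma_S\times(0,T)$, equi-integrability in $L^{5/4}$ and Vitali's theorem, using that $\theta_n\in L^5(0,T;L^5(\Gamma_S))$ uniformly), and the Michaelis--Menten reaction term $\mathbf{A}$, whose rational factors $u^{i}_n/(K+|u^{i}_n|)$ are uniformly bounded and converge a.e., so dominated convergence applies once the remaining factors converge in $L^2$.

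Finally, I would show that the state constraint survives the passage to the limit. Since $\mathbf{u}_n\in\mathbf{W}_3\hookrightarrow\mathcal{C}([0,T];[L^2(\Omega)]^5)$ with uniform bound, a subsequence of $u^5_n$ converges in $\mathcal{C}([0,T];L^2(\Omega)_{\text{weak}})$, so for every fixed $t\in[0,T]$ the mean $\tfrac{1}{\mu(\Omega_C)}\int_{\Omega_C}u^5_n(t)\,d\mathbf{x}$ converges to $\tfrac{1}{\mu(\Omega_C)}\int_{\Omega_C}u^5(t)\,d\mathbf{x}$, and since the former lies in the closed interval $[\lambda^m,\lambda^M]$, so does the latter. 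Combining the three items, the limit tuple belongs to $\mathcal{U}$, establishing weak closedness. The main obstacle I anticipate is handling the quartic radiative boundary term together with the Smagorinsky nonlinearity simultaneously, as both require upgrading weak to a.e.\ convergence and careful control on boundary traces; the rest is a systematic compactness-and-limit-passage argument already available in the framework established by the preceding lemmas.
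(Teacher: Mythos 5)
Your proposal follows essentially the same route as the paper: decompose the states via the lifting operators of Lemmas \ref{traza1} and \ref{traza2}, derive uniform bounds from the boundedness of $\{\mathbf{g}_n\}$ in $[H^1(0,T)]^{N_{CT}}$ together with Lemma \ref{remark4} and the estimates of \cite{fran7,fran8}, extract weakly/strongly convergent subsequences, pass to the limit in the variational formulations with the Smagorinsky term handled by a monotonicity (Minty-type) argument, and finally transfer the pointwise-in-time state constraint to the limit. If anything, your justification of the last step --- uniform bounds in $\mathbf{W}_3$ giving convergence in $\mathcal{C}([0,T];L^2(\Omega)_{\mathrm{weak}})$ so that the mean over $\Omega_C$ converges for every fixed $t$ --- is stated more carefully than the paper's appeal to strong convergence in $[L^2(0,T;L^2(\Gamma_C))]^5$.
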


\begin{proof} Let us consider a sequence of elements
$\{(\mathbf{v}_n,\theta_n,\mathbf{u}_n,\mathbf{g}_n)\}_{n \in \mathbb{N}}\subset \mathcal{U}$ such that
$(\mathbf{v}_n,\theta_n,\mathbf{u}_n,\mathbf{g}_n) \rightharpoonup (\mathbf{v},\theta,\mathbf{u}, \mathbf{g})$ in
$L^3(0,T;\mathbf{X}_1) \times L^2(0,T;X_2) \times L^2(0,T;\mathbf{X}_3) \times \mathcal{U}_{ad}.$
In particular, the sequence $\{\mathbf{g}_n\}_{n \in \mathbb{N}}$ is bounded in $[H^1(0,T)]^{N_{CT}}$ and then, 
thanks to the estimates obtained in Lemma 7 of \cite{fran7}, in Theorem 9 of \cite{fran8}, and 
in above Lemmas \ref{traza1}, \ref{traza2} and \ref{remark4}, we have that the sequence 
$\{(\mathbf{z}_n, \boldsymbol{\zeta}_{\mathbf{g}_n}, \xi_n,\zeta_{\mathbf{h}_{\theta_n}} \mathbf{w}_n,
\boldsymbol{\zeta}_{\mathbf{h}_{\mathbf{u}_n}} )\}_{n \in \mathbb{N}} \subset
\widetilde{\mathbf{W}}_1 \times W^{1,2,2}(0,T;[H_{\sigma}^2(\Omega)]^3, [H_{\sigma}^2(\Omega)]^3) \times 
\widetilde{W}_2 \times W^{1,2,2}(0,T;H^2(\Omega), H^2(\Omega))\times 
\widetilde{\mathbf{W}}_3 \times W^{1,2,2}(0,T;[H^2(\Omega)]^5,[H^2(\Omega)]^5) $
induced by Definition \ref{definition1} is bounded, where, 
for all $n \in \mathbb{N}$, 
$\mathbf{v}_n=\mathbf{w}_n + \boldsymbol{\zeta}_{\mathbf{g}_n}$, 
$\theta_n=\xi_n +\zeta_{\mathbf{h}_{\theta_n}}$ and 
$\mathbf{u}_n=\mathbf{w}_n+\boldsymbol{\zeta}_{
\mathbf{h}_{\mathbf{u}_n}}.$ 

Now, if we denote by 
$\mathbf{w}=\mathbf{v}-\boldsymbol{\zeta}_{\mathbf{g}}$, 
$\xi=\theta-\zeta_{\mathbf{h}_{\theta}}$ and 
$\mathbf{w}=\mathbf{u}-\boldsymbol{\zeta}_{\mathbf{h}_{\mathbf{u}}}$, 
we have (taking subsequences if necessary) the following convergences for the elements associated to the sequence of controls:
\begin{itemize}
\item $g_n(t) \rightarrow g(t)$ strongly for all $t \in [0,T]$ (so, in particular, $g_n(0)\rightarrow g(0)$ and, consequently, $g(0)=0$),
\item $\displaystyle \boldsymbol{\zeta}_{\mathbf{g}_n} \rightharpoonup \boldsymbol{\zeta}_{\mathbf{g}}$ weakly in 
$W^{1,2,2}(0,T;[H_{\sigma}^2(\Omega)]^3, [H_{\sigma}^2(\Omega)]^3)$,
\item $\displaystyle \zeta_{\mathbf{h}_{\theta_n}} \rightarrow \zeta_{\mathbf{h}_{\theta}}$ strongly in $W^{1,2,2}(0,T;H^2(\Omega), H^2(\Omega))$, 
\item $\displaystyle \boldsymbol{\zeta}_{\mathbf{h}_{\mathbf{u}_n}} \rightarrow \boldsymbol{\zeta}_{\mathbf{h}_{\mathbf{u}}}$ strongly 
in $W^{1,2,2}(0,T;[H^2(\Omega)]^5,[H^2(\Omega)]^5)$,
\end{itemize}
where the first convergence is a direct consequence of compactness of $H^1(0,T)$ in $\mathcal{C}([0,T])$ and the two last convergences 
are consequence of Lemma \ref{traza2}. 
In a similar way we also have, for the sequence $\{\mathbf{z}_n\}_{n \in \mathbb{N}}$, the following convergences:
\begin{itemize}
\item $\mathbf{z}_n \rightarrow \mathbf{z}$ strongly in $L^p(0,T;[L^q(\Omega)]^3)$ for all $1<p<\infty$ and $2\leq q <\infty$,
\item $\displaystyle \mathbf{z}_n \rightharpoonup \mathbf{z}$ weakly in $L^3(0,T;\widetilde{\mathbf{X}}_1)$,
\item $\displaystyle \frac{d \mathbf{z}_n}{dt} \rightharpoonup \frac{d \mathbf{z}}{dt}$ weakly in $L^2(0,T;[L^2(\Omega)]^3)$,
\item $\displaystyle \nabla \mathbf{z}_n \rightharpoonup^* \nabla \mathbf{z}$ weakly-$*$ in $L^{\infty}(0,T;[L^3(\Omega)]^3)$,
\item $\displaystyle \beta(e(\boldsymbol{\zeta}_{\mathbf{g}_n} +\mathbf{z}_n)) \, e(\boldsymbol{\zeta}_{\mathbf{g}_n}+
\mathbf{z}_n)\rightharpoonup \widehat{\beta}$, weakly in $L^{3/2}(0,T;\widetilde{\mathbf{X}}_1')$.
\end{itemize}
Moreover, for the sequence $\{\xi_n\}_{n \in \mathbb{N}}$ we have:
\begin{itemize}
\item $\displaystyle \xi_n \rightharpoonup \xi$ in $L^2(0,T;\widetilde{X}_2)$, 
\item $\displaystyle \xi_n \rightharpoonup^* \xi$ in $L^{\infty}(0,T;L^2(\Omega))$, 
\item $\displaystyle \xi_n \rightarrow \xi$ in $L^{10/3-\epsilon}(0,T;L^{10/3-\epsilon}(\Omega))$, for all $\epsilon >0$ small enough,
\item $\displaystyle \xi_n \rightarrow \xi$ in $L^2(0,T;L^2(\Gamma_C))$,
\item $\displaystyle \xi_n \rightarrow \xi$ in $L^4(0,T;L^4(\Gamma_S))$.
\end{itemize}
Finally, for the sequence $\{\mathbf{w}_n\}_{n \in 
\mathbb{N}}$, we have the following convergences:
\begin{itemize}
\item $\displaystyle \mathbf{w}_n \rightharpoonup \mathbf{w}$ weakly in $L^2(0,T;\widetilde{\mathbf{X}}_3)$, 
\item $\displaystyle \frac{d \mathbf{w}_n}{dt} \rightharpoonup \frac{d \mathbf{w}}{dt}$ weakly in $L^2(0,T;\widetilde{\mathbf{X}}_3')$, 
\item $\displaystyle \mathbf{w}_n \rightarrow \mathbf{w}$ strongly in $[L^{10/3-\epsilon}(0,T;L^{10/3-\epsilon}(\Omega))]^5$, for all $\epsilon >0$ small enough,
\item $\displaystyle \mathbf{w}_n \rightarrow \mathbf{w}$ strongly in $[L^2(0,T;L^2(\Gamma_C))]^5$.
\end{itemize}

So, we are able to pass to the limit in the corresponding variational formulations using the same arguments that we have employed 
for proving the compactness of operator $M_{\mathbf{u}}$, and in the Galerkin approximations for systems 
(\ref{eq:system1}) and (\ref{eq:system2}) (cf. \cite{fran7} and \cite{fran8}). 
The only difficulty here is to prove that 
$\widehat{\beta}=\beta(e(\boldsymbol{\zeta}_{\mathbf{g}} +\mathbf{z})) \, e(\boldsymbol{\zeta}_{\mathbf{g}}+\mathbf{z})$. 
However, by Lemma 4.2 of \cite{fran7} we have that
\begin{eqnarray*} \hspace{-.1cm}
\int_0^T \int_{\Omega}\left[ \beta(e(\boldsymbol{\zeta}_{\mathbf{g}_n}+\mathbf{z}_n))  
e(\boldsymbol{\zeta}_{\mathbf{g}_n}+\mathbf{z}_n)- \beta(e(\boldsymbol{\zeta}_{\mathbf{g}_n}+ \boldsymbol{\eta})) 
e(\boldsymbol{\zeta}_{\mathbf{g}_n}+\boldsymbol{\eta})\right] :  e(\mathbf{z}_n-\boldsymbol{\eta}) \, d \mathbf{x}\, dt \geq 0
\end{eqnarray*}
for all $\boldsymbol{\eta} \in L^3(0,T;\widetilde{\mathbf{X}}_1)$, and then, using similar 
techniques that we can find in the proof of Theorem 4.3 of \cite{fran7}, we can prove that
\begin{equation} \nonumber %\label{eq:ineq1}
\int_0^T \int_{\Omega} \left[ \widehat{\beta}- \beta(e(\boldsymbol{\zeta}_{\mathbf{g}}+ \boldsymbol{\eta})) 
e(\boldsymbol{\zeta}_{\mathbf{g}}+\boldsymbol{\eta})\right] :  e(\mathbf{z}-\boldsymbol{\eta})  \, d \mathbf{x}\, dt  \geq 0,
\end{equation}
for all $\boldsymbol{\eta}\in L^3(0,T;\widetilde{\mathbf{X}}_1)$.  Finally, choosing $\boldsymbol{\eta}=\mathbf{z} \pm \lambda \boldsymbol{\zeta}$, 
with $\boldsymbol{\zeta} \in  L^3(0,T;\widetilde{\mathbf{X}}_1)$ and $\lambda$ an arbitrary positive number, 
and multiplying both sides of the inequality by $\lambda^{-1}$, we obtain
\begin{equation} \nonumber
\begin{array}{l}
\displaystyle
\int_0^T \int_{\Omega} \left[ \widehat{\beta}- \beta(e(\boldsymbol{\zeta}_{\mathbf{g}}+ \mathbf{z}+\lambda \boldsymbol{\zeta})) 
e(\boldsymbol{\zeta}_{\mathbf{g}}+ \mathbf{z}+\lambda \boldsymbol{\zeta})\right] :  e(\boldsymbol{\zeta})  \, d \mathbf{x}\, dt  \leq  0 \\
\displaystyle
\int_0^T \int_{\Omega} \left[ \widehat{\beta}- \beta(e(\boldsymbol{\zeta}_{\mathbf{g}}+ \mathbf{z}-\lambda \boldsymbol{\zeta})) 
e(\boldsymbol{\zeta}_{\mathbf{g}}+ \mathbf{z}-\lambda \boldsymbol{\zeta})\right] :  e(\boldsymbol{\zeta})  \, d \mathbf{x}\, dt  \geq  0.
\end{array}
\end{equation}
Now, letting $\lambda$ tend to zero, we deduce that, for all $\boldsymbol{\zeta} \in  L^3(0,T;\widetilde{\mathbf{X}}_1)$:
\begin{equation}
\int_0^T \int_{\Omega} 
\left[ \widehat{\beta}-
 \beta(e(\boldsymbol{\zeta}_{\mathbf{g}}+
 \mathbf{z})) 
e(\boldsymbol{\zeta}_{\mathbf{g}}+
\mathbf{z})\right] :  
e(\boldsymbol{\zeta})  \, d \mathbf{x}\, dt  =0.
\end{equation}
Thus, $\widehat{\beta}= \beta(e(\boldsymbol{\zeta}_{\mathbf{g}}+ \mathbf{z})) e(\boldsymbol{\zeta}_{\mathbf{g}}+ \mathbf{z})$ 
a.e. $(\mathbf{x},t) \in \Omega \times ]0,T[$, and then, $(\mathbf{v},\theta,\mathbf{u})$ is a solution of the systems 
(\ref{eq:system1}), (\ref{eq:system2}) and (\ref{eq:system3}) associated to the control $\mathbf{g}$. 

Finally, by the strong convergence of $\{\mathbf{u}_n\}_{n \in \mathbb{N}}$ in $[L^2(0,T;L^2(\Gamma_C))]^5$, we have
\begin{equation}
\frac{1}{\mu(\Omega_C)} \int_{\Omega_C} u^5(t) \, 
d\mathbf{x} \in [\lambda^m,\lambda^M], \quad \forall t \in [0,T],
\end{equation}
and, consequently, the element $(\mathbf{v},\theta,\mathbf{u},\mathbf{g})\in \mathcal{U}$.      \qed
\end{proof}

\begin{theorem} [Existence of optimal solution]
The optimal control problem $(\mathcal{P})$ has, at least, a solution.
\end{theorem}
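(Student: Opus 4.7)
The proof will be a standard application of the direct method of the calculus of variations, made possible by the weak closedness lemma just proved. The plan is to pick a minimizing sequence in $\mathcal{U}$, use the a~priori bounds to extract weakly convergent subsequences of controls and associated states, pass to the limit inside $\mathcal{U}$ using the previous lemma, and finally conclude by weak lower semicontinuity of the cost functional $J$.

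More concretely, I would proceed as follows. First, I would assume $\mathcal{U}\neq\emptyset$ (this is the implicit standing hypothesis; otherwise the statement is vacuous), so that $m:=\inf\{J(\mathbf{g})\,:\,(\mathbf{v},\theta,\mathbf{u},\mathbf{g})\in\mathcal{U}\}\in[0,+\infty)$ is well defined since $J\geq 0$. Pick a minimizing sequence $\{(\mathbf{v}_n,\theta_n,\mathbf{u}_n,\mathbf{g}_n)\}_{n\in\mathbb{N}}\subset\mathcal{U}$ with $J(\mathbf{g}_n)\to m$. Because $\mathcal{U}_{ad}$ is bounded in $[H^1(0,T)]^{N_{CT}}$, the sequence $\{\mathbf{g}_n\}$ admits a subsequence (not relabelled) such that $\mathbf{g}_n\rightharpoonup\mathbf{g}^*$ weakly in $[H^1(0,T)]^{N_{CT}}$, with $\mathbf{g}^*\in\mathcal{U}_{ad}$ since $\mathcal{U}_{ad}$ is convex, closed and hence weakly closed. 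Next, invoking Lemmas \ref{traza1}, \ref{traza2}, \ref{remark4} and Theorem \ref{existencesystem3}, together with the estimates recalled from \cite{fran7,fran8} for the thermo-hydrodynamic subsystem, the associated state sequence $\{(\mathbf{v}_n,\theta_n,\mathbf{u}_n)\}$ is bounded in $\mathbf{W}_1\times W_2\times\mathbf{W}_3$. Extracting a further subsequence, we obtain weak limits $(\mathbf{v}^*,\theta^*,\mathbf{u}^*)\in L^3(0,T;\mathbf{X}_1)\times L^2(0,T;X_2)\times L^2(0,T;\mathbf{X}_3)$.

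The crucial step is the passage to the limit: by the weak closedness of $\mathcal{U}$ established in the preceding lemma, we deduce $(\mathbf{v}^*,\theta^*,\mathbf{u}^*,\mathbf{g}^*)\in\mathcal{U}$. In particular, $(\mathbf{v}^*,\theta^*,\mathbf{u}^*)$ solves the coupled state systems (\ref{eq:system1})--(\ref{eq:system3}) for the control $\mathbf{g}^*$, and the pointwise-in-time state constraint on the mean of $u^{5}$ over $\Omega_C$ is preserved in the limit.

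It only remains to verify that $J(\mathbf{g}^*)\leq m$. This follows because the cost
\begin{equation}
J(\mathbf{g})=\tfrac{1}{2}\sum_{k=1}^{N_{CT}}\|g^k\|_{H^1(0,T)}^{2}
\end{equation}
is convex and continuous (in fact, a squared Hilbert seminorm) on $[H^1(0,T)]^{N_{CT}}$, hence weakly lower semicontinuous, which yields $J(\mathbf{g}^*)\leq\liminf_{n\to\infty}J(\mathbf{g}_n)=m$. Combined with $\mathbf{g}^*\in\mathcal{U}_{ad}$ and $(\mathbf{v}^*,\theta^*,\mathbf{u}^*,\mathbf{g}^*)\in\mathcal{U}$, this gives that $\mathbf{g}^*$ realizes the minimum, concluding the proof. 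The only delicate point is the nonlinear limit passage needed inside the weak closedness lemma, but that work has already been done there (notably for the monotone Smagorinsky term through the $\widehat{\beta}$ identification argument), so at this stage the argument is essentially routine. I would not expect any genuinely new obstacle beyond checking that all convergences used above are compatible with the state constraint being preserved, which is immediate from the strong convergence $\mathbf{u}_n\to\mathbf{u}^*$ in $[L^2(0,T;L^2(\Omega_C))]^5$ inherited from compact embedding.
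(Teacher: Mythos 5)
Your proposal is correct and follows essentially the same route as the paper's own proof: a minimizing sequence, boundedness of controls and states, extraction of a weakly convergent subsequence, membership of the limit in $\mathcal{U}$ via the weak closedness lemma, and weak lower semicontinuity of the convex continuous functional $J$. The extra details you supply (non-vacuousness of $\mathcal{U}$ and the identification of $J$ as a squared $H^1$ norm) are harmless refinements of the same argument.
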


\begin{proof} Let us consider a minimizing sequence $\{(\mathbf{v}_n,\theta_n,\mathbf{u}_n,\mathbf{g}_n)\}_{n \in \mathbb{N}}\subset \mathcal{U}$. 
Then, $\{\mathbf{g}_n\}_{n \in \mathbb{N}}$ is bounded in $[H^1(0,T)]^{N_{CT}}$, which implies, thanks to the estimates 
(\ref{eq:estimate1a}), (\ref{eq:estimate2a}), (\ref{eq:estimate3a})and  (\ref{eq:estimate4a}), and to the Hypotheses of Theorem \ref{existencesystem3}, 
that the sequence $\{\mathbf{u}_n\}_{n \in \mathbb{N}}$ is bounded in $\mathbf{W}_3$. 
We also have, thanks to estimates obtained in \cite{fran7} and \cite{fran8} that the sequence $\{(\mathbf{v}_n,\theta_n)\}_{n \in \mathbb{N}}$
is also bounded in $\mathbf{W}_1 \times W_2$. 
Thus, we can take a subsequence of $\{(\mathbf{v}_n,\theta_n,\mathbf{u}_n,\mathbf{g}_n)\}_{n \in \mathbb{N}}\subset \mathcal{U}$, 
still denoted in the same way, such that $(\mathbf{v}_n,\theta_n,\mathbf{u}_n,\mathbf{g}_n) \rightharpoonup 
(\widetilde{\mathbf{v}},\widetilde{\theta}, \widetilde{\mathbf{u}}, \widetilde{\mathbf{g}})$ in 
$L^3(0,T;\mathbf{X}_1) \times L^2(0,T;X_2) \times L^2(0,T;\mathbf{X}_3) \times \mathcal{U}_{ad}.$
Moreover, from previous Lemma, we have that $(\widetilde{\mathbf{v}},\widetilde{\theta}, \widetilde{\mathbf{u}}, \widetilde{\mathbf{g}}) \in \mathcal{U}$. 

Finally, due to the continuity and the convexity of the cost functional $J$ (in particular, $J$ is weakly lower semicontinuous), we deduce that:
\begin{equation} \nonumber
J(\widetilde{\mathbf{g}}) \leq \liminf_{n \to \infty} J(\mathbf{g}_n)=
\inf_{(\mathbf{v},\theta,\mathbf{u},\mathbf{g}) \in \mathcal{U}} J(\mathbf{g}) \leq J(\widetilde{\mathbf{g}}).
\end{equation}
Thus, $\widetilde{\mathbf{g}} \in \mathcal{U}_{ad}$ is a solution of the optimal control problem $(\mathcal{P})$.   \qed
\end{proof}

\begin{remark} It is worthwhile remarking here that, using standard techniques in the spirit of those presented in below section,
it is possible to obtain a formal optimality system for the characterization of the optimal solutions of the control problem $(\mathcal{P})$.
However, since this is not the main aim of this paper, and for the sake of brevity, we will not present here this optimality system,
focusing our attention on the numerical computation of these optimal solutions. \qed
\end{remark}

% Numerical resolution of the control problem

\section{Numerical resolution of the control problem}

In this section we will present a numerical approximation for the optimal control problem $(\mathcal{P})$. 
So, we will discretize the state systems (\ref{eq:system1}), (\ref{eq:system2}) and (\ref{eq:system3}) using a standard finite element method, 
and we will compute the numerical approximation of the resulting nonlinear optimization problem (that appears after 
the full space-time discretization of the control problem) using an interior point algorithm. 
In this particular case, due to the specific relations between the dimensions of the control and the constraint variables, 
the numerical approximation of the Jacobian matrix of the constraints will be performed using the discretized adjoint system 
(row by row) instead of the linearized systems (column by column). In addition, the computation of each row of the Jacobian matrix 
will be parallelized.

\subsection{Space-time discretization}

Let us consider a regular partition $0=t_0<t_1<\ldots<t_N=T$ of the time interval $[0,T]$ such that
$t_{n+1}-t_n=\Delta t=\frac{1}{\alpha}$, $\forall n =0,\ldots,N-1$, and recall the material derivative of a generic scalar field 
$\phi$ defined as:
\begin{equation}
\frac{D \phi}{Dt}(\mathbf{x},t)= \frac{\partial}{\partial t} \phi(\mathbf{X}(\mathbf{x},t),t)=
\frac{\partial \phi}{\partial t}(\mathbf{x},t)+ \mathbf{v}(\mathbf{x},t) \cdot \nabla \phi(\mathbf{x},t),
\end{equation} 
where $\mathbf{X}$ represents the characteristic line, that is, verifies the equation:
\begin{equation}
\frac{\partial \mathbf{X}}{\partial t}(\mathbf{x},t)= \mathbf{v}(\mathbf{x},t).
\end{equation}
So, we can approximate the material derivative in the following way:
\begin{equation} \label{eq:materialderivative}
\frac{D \phi}{Dt}(t_{n+1}) \simeq \alpha \left(\phi^{n+1}-\phi^n \circ \mathbf{X}^n_{-}\right),
\end{equation}
where $\phi^n$ represents an approximation to $\phi(t_n)$, and $\mathbf{X}^n_{-}(\mathbf{x})=\mathbf{X}(\mathbf{x},t_{n+1};t_{n})$ 
(i.e., the position at time $t_{n}$ of a particle that at time $t_{n+1}$ was located at point $\mathbf{x}$) 
is the solution of the following trajectory equation:
\begin{equation}
\left\{
\begin{array}{l}
\displaystyle \frac{d \mathbf{X}}{d \tau}=\mathbf{v}(\mathbf{X}(\mathbf{x},t;\tau),\tau),  \\ 
\displaystyle \mathbf{X}(\mathbf{x},t;t)=\mathbf{x},
\end{array}
\right.
\end{equation}
approached by the Euler scheme, that is, we consider the following approximation 
(see further details, for instance, in \cite{cita1,cita2}):
\begin{equation}
(\phi^n \circ \mathbf{X}^n_{-})(\mathbf{x}) \simeq \phi^n(\mathbf{x}-\Delta t\,  \mathbf{v}^n(\mathbf{x}) ).
\end{equation}

For the space discretization, we take a family of meshes $\tau_h$ for the domain $\Omega$ with characteristic size $h$ 
and, associated to this family of meshes, we define the following finite element spaces (cf. Section 4.1 of \cite{girault1}):
\begin{itemize}
\item $\mathbf{V}_h$ ($\mathbb{P}_{1b}$ FEM space) for the water velocity $\mathbf{v}$:
\begin{equation}
\mathbf{V}_h=\{\mathbf{v} \in [\mathcal{C}(\overline{\Omega})]^3 \; : \; \mathbf{v}_{_{\tau}}\in [\mathbb{P}_{1b}(\tau)]^3,\; \forall \tau \in \tau_h,
\ \mathbf{v}_{|_{\partial \Omega \setminus (\Gamma_T \cup \Gamma_C)}}=\mathbf{0}\},
\end{equation}
and, for the test functions and the adjoint state, the subspace:
\begin{equation}
\mathbf{W}_h=\{\mathbf{w} \in \mathbf{V}_h \; : \; \mathbf{w}_{|_{\Gamma_T}}=\mathbf{0}\}.
\end{equation}
\item $M_h$ ($\mathbb{P}_1$ FEM space) for the water pressure $p$:
\begin{equation}
M_h=\{p \in \mathcal{C}(\overline{\Omega})\; :\; p_{|_{\tau}} \in \mathbb{P}_1(\tau),\; \forall \tau \in \tau_h\}.
\end{equation}
\item $K_h$ ($\mathbb{P}_1$ FEM space) for the water temperature $\theta$:
\begin{equation}
K_h=\{\theta \in \mathcal{C}(\overline{\Omega})\; :\; \theta_{|_{\tau}} \in \mathbb{P}_1(\tau),\; \forall \tau \in \tau_h\},
\end{equation}
and, for the test functions and the adjoint state, the subspace:
\begin{equation}
H_h=\{\theta \in K_h\; :\; \theta_{|_{\Gamma_T}}=0\}.
\end{equation}
\item $\mathbf{X}_h$ ($\mathbb{P}_1$ FEM space) for the concentration $\mathbf{u}$ of the species involved in eutrophication process:
\begin{equation}
\mathbf{X}_h=\{\mathbf{u} \in [\mathcal{C}(\overline{\Omega})]^5 \; :\; \mathbf{u}_{|_{\tau}} \in [\mathbb{P}_1(\tau)]^5,\; \forall \tau \in \tau_h\},
\end{equation}
and, for the test functions and the adjoint state, the subspace:
\begin{equation}
\mathbf{Z}_h=\{\mathbf{u} \in \mathbf{X}_h \; :\; \mathbf{u}_{|_{\Gamma_T}}=\mathbf{0}\}.
\end{equation}
\end{itemize}

With respect to the computational treatment of the problem, 
we have used the open code FreeFem++ \cite{HECHT1} for the space-time discretizations of the problem. 
We have also employed a penalty method (cf. Section 4.3 of \cite{girault1}) for computing the solution of the Stokes 
problems that appear after discretization. 
Finally, in order to reduce the CPU time necessary for computing the solution of the state systems, we have applied an explicit scheme 
(evaluations in previous time step) for the nonlinearities and the coupled terms of the discretized problem. 

So, we consider the following space-time discretization for the optimal control problem $(\mathcal{P})$ where,
for the sake of simplicity, we will use the same notations for the discrete problem as in the case of the continuous one:
\begin{enumerate}
\item \textit{Coupling of temperature/species in collectors and injectors}: 

We denote by $\theta^n\in K_h$ 
and $\mathbf{u}^n\in \mathbf{X}_h$, respectively, the water temperature and the species concentration at time step $n=0,\ldots,N$. 
Then, we consider the following approximation for functions $\gamma_{\theta}^k$, $k=1,\ldots,N_{CT}$, defined in (\ref{eq:gammatheta}),
(analogously for functions $\gamma_{u^i}^k$, $k=1,\ldots,N_{CT}$, $i=1,\ldots,5$, defined in (\ref{eq:gammaeutro})):
\begin{eqnarray*} \hspace{-.8cm}
\gamma_{\theta}^k(t)=\frac{1}{\mu(C^k)} \left[ \chi_{(-\infty,t_0)} \int_{C^k} \theta^0 d \gamma +
\sum_{n=1}^{N} \chi_{[t_{n-1},t_n)} \int_{C^k} \theta^{n-1} d \gamma + \chi_{[t_N,\infty)} \int_{C^k} \theta^N d \gamma \right]
\end{eqnarray*}
Moreover, if we assume the value $\epsilon=\frac{\Delta t}{2}$ in the definition (\ref{eq:rhoeps}) of function $\rho_{\epsilon}$ we have 
that the support of $\rho_{\Delta t/2}(t^n-\frac{\Delta t}{2}-s)$ is contained in $(t^n-\Delta t, t^n)=(t^{n-1},t^n)$, for all $n=1,\ldots, N$, and then:
\begin{equation} \nonumber
\begin{array}{rcl}
\displaystyle 
\phi_{\theta}^n(\mathbf{x})&=& \displaystyle \sum_{k=1}^{N_{CT}} \varphi^k(\mathbf{x}) 
\int_{-T}^T \rho_{\Delta t/2}(t^n-\frac{\Delta t}{2} -s) \, \gamma_{\theta}^k(s) \, ds  \\
&=&\displaystyle \sum_{k=1}^{N_{CT}} \varphi^k(\mathbf{x}) \int_{t_{n-1}}^{t_n}  \rho_{\Delta t/2}(t^n-\frac{\Delta t}{2} -s) \left[
\frac{1}{\mu(C^k)} \int_{C^k} \theta^{n-1} \, d \gamma\right] ds  \\
&=&\displaystyle \sum_{k=1}^{N_{CT}} \varphi^k(\mathbf{x}) \, \frac{1}{\mu(C^k)} \int_{C^k} \theta^{n-1} \, d \gamma.
\end{array}
\end{equation}
Finally, we approximate each element $\varphi^k$ by the indicator function of the injector $T^k$, $k=1,\ldots, 
N_{CT}$, and each element $\tilde{\varphi}^k$ by the indicator function of the collector $C^k$, $k=1,\ldots,N_{CT}$. 
Thus, the temperature in each injector at time step $t_n$ is the mean temperature in the corresponding collector at time step $t_{n-1}$ 
(analogously for the species of the eutrophication model). \\

\item \textit{Discretized control}:

{\color{red} We consider the following discretization of the admisible set (\ref{uad}) (we will also denote 
by ${\mathcal{U}}_{ad}$ the set of admissible discrete controls):
\begin{equation*}
\begin{array}{l}
\mathcal{U}_{ad}=\{ \mathbf{g} \in [\mathcal{C}([0,T])]^{N_{CT}}:\; \mathbf{g}(0)=\boldsymbol{0}, \\
\qquad \mathbf{g}_{|_{[t_n,t_{n+1}]}}\in [\mathbb{P}_1([t_n,t_{n+1}])]^{N_{CT}},\; \forall n=0,\ldots,N-1, \, \text{and}\\
\qquad g^{n,k}=g^k(t_n) \in [c_1,c_2], \; \forall k=1,\ldots,N_{CT},\; \forall n=1,\ldots,N\},
\end{array}
\end{equation*}
where $c_1\,,c_2>0$ technological bounds related to 
mechanical characteristics of pumps and they are chosen so that 
$\|g^k\|_{H^1(0,T)} \leq c$,  $\forall k=1,\ldots,N_{CT}$. So, if we consider 
the standard basis for the previous finite element space, we can consider 
the following discrete control:
\begin{equation} 
\mathbf{g}=(\underbrace{g^{1,1},g^{1,2},\ldots,g^{1,N_{CT}}}_{\mathbf{g}^1}, \, \ldots \, ,
\underbrace{g^{N,1},g^{N,2},\ldots,g^{N,N_{CT}}}_{\mathbf{g}^{N}}) \in \mathbb{R}^{N\times N_{CT}},
\end{equation}

}

\item \textit{Discretized cost functional:} 

{\color{red} In order to simplify the numerical resolution of the control problem, we will consider the following 
modification of the cost functional restriction to the previous admissible set:} 
\begin{equation}
{J}(\mathbf{g})=\frac{\sigma_1}{2}\sum_{n=1}^N \sum_{k=1}^{N_{CT}} (g^{n,k})^2 
+\frac{\sigma_2}{2} \sum_{n=1}^{N-1} \sum_{k=1}^{N_{CT}} (g^{n+1,k}-g^{n,k})^2,
\end{equation}
where $\sigma_1$ and $\sigma_2$ are positive weights that we will take into account in the numerical tests. \\

\item \textit{Discretized state constraints:} 

We consider the function:
\begin{equation} 
\mathbf{G} : \mathbf{g} \in {\mathcal{U}}_{ad} \longrightarrow \mathbf{G}(\mathbf{g})
= ({G}^1(\mathbf{g}),\ldots,{G}^N(\mathbf{g})) \in \mathbb{R}^{N} ,
\end{equation}
where, for each $n=1,\ldots,N$, 
\begin{equation} 
{G}^n(\mathbf{g})=\frac{1}{\mu(\Omega_C)} \int_{\Omega_C} {u}^{n+1,5}  \, d \mathbf{x},
\end{equation}
with $\mathbf{u}^{n+1} \in \mathbf{Z}_h$ the solution of the discretized eutrophication model.
Thus, we can express:
\begin{equation} 
{\mathcal{U}}=\{\mathbf{g}\in {\mathcal{U}}_{ad}\; :\; {G}^n(\mathbf{g}) \in [\lambda^m,\lambda^M], \ \forall n=1,\ldots,N\}.
\end{equation}
It is worthwhile remarking here that, due the type of time discretization considered for the material derivatives (\ref{eq:materialderivative}), 
the control $\mathbf{g}^N$ acts over the species and the temperature at time $t_{N+1}$. So, it will be necessary to compute 
one additional time step in the case of temperature and species in order to take into account this control. 
This fact can be more clearly noticed in the dependence scheme shown in Figure~\ref{dscheme}. \\

\begin{figure}[!ht]
\[
\xymatrix{  
& \mathbf{v}^0 \ar[dr] \ar[drr] \ar[d] & 
\theta^0 \ar[d] \ar[dr] \ar[dl] & 
\mathbf{u}^0 \ar[d] \\ 
\mathbf{g}^1 \ar[r] & 
\mathbf{v}^1 \ar[dr] \ar[drr] \ar[d] & 
\theta^1 \ar[d] \ar[dr] \ar[dl] & 
\mathbf{u}^1 \ar[d] \\ 
\mathbf{g}^2 \ar[r] & 
\mathbf{v}^2 \ar[dr] \ar[drr] \ar[d] & 
\theta^2 \ar[d] \ar[dr] \ar[dl] & 
\mathbf{u}^2  \ar[d] \ar[r]& 
G^1(\mathbf{g})=G^1(\mathbf{g}^1) \\ 
\mathbf{g}^3 \ar[r]
& \mathbf{v}^3 \ar@{.>}[d]& 
\theta^3 \ar@{.>}[d]& 
\mathbf{u}^3 \ar[r] \ar@{.>}[d]& 
G^2(\mathbf{g})=G^2 (\mathbf{g}^1,\mathbf{g}^2) \\
\mathbf{g}^N  \ar[r] & 
\mathbf{v}^N \ar[dr] \ar[drr]& 
\theta^N \ar[d] \ar[dr] & 
\mathbf{u}^N \ar[d] \ar[r]& 
G^{N-1}(\mathbf{g})=G^{N-1} (\mathbf{g}^1,\mathbf{g}^2,\ldots,\mathbf{g}^{N-1})\\ 
& &  \theta^{N+1} &\mathbf{u}^{N+1} \ar[r] & G^{N}(\mathbf{g})=
G^{N} (\mathbf{g}^1,\mathbf{g}^2,\ldots,\mathbf{g}^{N-1},\mathbf{g}^{N})}
\]
\caption{Dependence scheme for the discretized variables.}
\label{dscheme}
\end{figure}
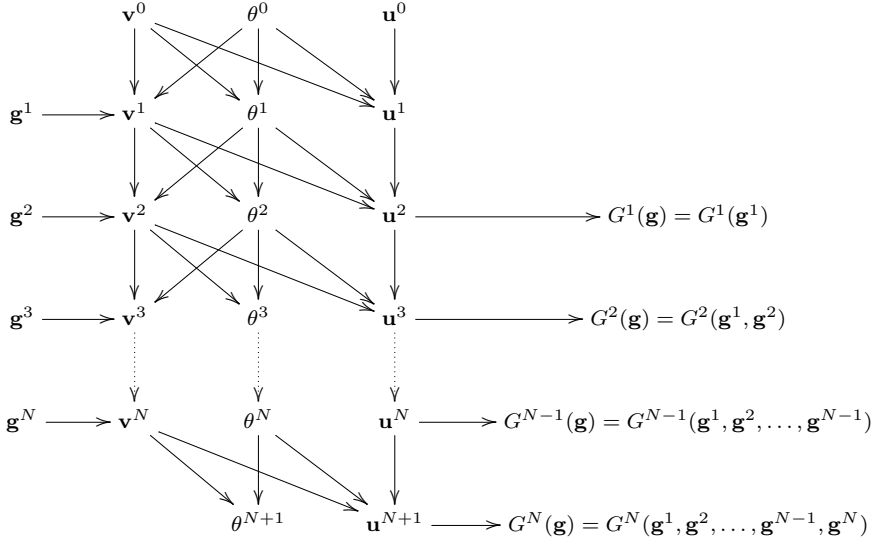

\item \textit{Water velocity and pressure:} 

Given $\mathbf{v}^0 \in \mathbf{W}_h$, 
the pair velocity/pressure $(\mathbf{v}^{n+1},p^{n+1}) \in \mathbf{V}_h \times M_h$, for each $n=0,1,\ldots,N-1$, with:
\begin{equation} 
\displaystyle
\mathbf{v}^{n+1}_{|{T^k}}=-\frac{g^{n+1,k}}{\mu(T^k)} \,\mathbf{n}, \quad
\mathbf{v}^{n+1}_{|{C^k}}=\frac{g^{n+1,k}}{\mu(C^k)} \,\mathbf{n}, \quad \forall k=1,\ldots,N_{CT},
\end{equation}
is the solution of the fully discretized system:
\begin{equation} \label{eq:system1f}
\begin{array}{r}
\displaystyle
\alpha \int_{\Omega} \mathbf{v}^{n+1} \cdot \boldsymbol{\eta} \, d \mathbf{x} +
\int_{\Omega} \beta(\mathbf{v}^n)  e(\mathbf{v}^{n+1}):e(\boldsymbol{\eta}) \, d \mathbf{x}  
-\int_{\Omega} p^{n+1} \nabla \cdot\boldsymbol{\eta}\, d \mathbf{x} \\
\displaystyle
-\int_{\Omega} \nabla \cdot \mathbf{v}^{n+1} q \, d \mathbf{x}
-\lambda \int_{\Omega} p^{n+1} q \, d \mathbf{x}
=\alpha \int_{\Omega} (\mathbf{v}^n \circ X^n_{-}) \cdot \boldsymbol{\eta}\, d \mathbf{x} \\
\displaystyle
+\int_{\Omega} \alpha_0(\theta^{n}-\theta^0) \mathbf{a}_g \cdot \boldsymbol{\eta}\, d \mathbf{x},
\quad \forall \boldsymbol{\eta} \in \mathbf{W}_h, \ \forall q \in M_h,
\end{array}
\end{equation}
where $\lambda>0$ is the penalty parameter and 
$\beta(\mathbf{v}^n)=2 \nu +2\nu_{tur}[e(\mathbf{v}^n):
e(\mathbf{v}^n)]^{1/2}$. \\

\item \textit{Water temperature:} 
Given $\theta^0 \in K_h$, the temperature $\theta^{n+1} \in K_h$, for each $n=0,\ldots,N$, with
\begin{equation}  
\theta^{n+1}_{|_{T^k}}=\frac{1}{\mu(C^k)} \int_{C^k} \theta^{n} \, d \gamma,\quad \forall k=1,\ldots, N_{CT},
\end{equation}
is the solution of the fully discretized system:
\begin{equation} \label{eq:system2f}
\begin{array}{r}
\displaystyle
\alpha \int_{\Omega} \theta^{n+1} \eta \, d \mathbf{x} + 
K \int_{\Omega} \nabla \theta^{n+1} \cdot \nabla \eta \, d \mathbf{x} +
b_1^N \int_{\Gamma_N} \theta^{n+1} \eta \, d \gamma \\ 
\displaystyle
+ b_1^S \int_{\Gamma_S} \theta^{n+1} \eta \, d \gamma
=\alpha \int_{\Omega} (\theta^n \circ X^n_{-}) \eta \, d\mathbf{x} +
b_1^N\int_{\Gamma_N} \theta_N^{n+1} \eta \, d \gamma \\ 
\displaystyle
+b_1^S \int_{\Gamma_S} \theta_S^{n+1} \eta\, d \gamma
+b_2^S \int_{\Gamma_S} ({T_r^{n}}^4 -|\theta^{n}|^3 \theta^{n}) \eta \, d \gamma, 
\quad \forall \eta \in H_h.
\end{array}
\end{equation}

\item \textit{Eutrophication species concentration:} 

Given $\mathbf{u}^0 \in 
\mathbf{X}_h$, the species concentration $\mathbf{u}^{n+1} \in \mathbf{X}_h$, for each $n=0,\ldots,N$, with:
\begin{equation}
\mathbf{u}^{n+1}_{|_{T^k}}= \frac{1}{\mu(C^k)} \int_{C^k} \mathbf{u}^n \,d\gamma, \quad \forall k=1,\ldots,N_{CT},
\end{equation}
is the solution of the fully discretized system:
\begin{equation} \label{eq:system3f}
\begin{array}{r}
\displaystyle 
\alpha \int_{\Omega} \mathbf{u}^{n+1} \cdot \boldsymbol{\eta} \, d \mathbf{x}
+ \int_{\Omega} \Lambda_{\mu} \nabla \mathbf{u}^{n+1} : \nabla \boldsymbol{\eta} \, d \mathbf{x} 
+\int_{\Omega} \mathbf{A}^n(\theta^n,\mathbf{u}^n) \mathbf{u}^{n+1} \cdot \boldsymbol{\eta} \, d \mathbf{x} \\ 
\displaystyle
=\alpha \int_{\Omega} \left(\mathbf{u}^n \circ X^n_- \right) \cdot 
\boldsymbol{\eta}\, d \mathbf{x} , \quad \forall \boldsymbol{\eta} \in \mathbf{Z}_h,
\end{array}
\end{equation}
where $\mathbf{A}^n(\theta^n,\mathbf{u}^n)\in \mathbb{R}^{5 \times 5}$ is the following matrix:
\begin{equation} 
\left[\begin{array}{ccccc}
0 & \displaystyle \frac{C_{nc} L^n(\theta^n) u^{n,1}}{K_N+|u^{n,1}|} - C_{nc} K_r & 0 & 0 & -C_{nc} K_{rd} D(\theta^n) \\ 
0 & \displaystyle K_r - \frac{L^n(\theta^n)u^{n,1}}{K_N+|u^{n,1}|}+K_{mf} & \displaystyle \frac{K_z u^{n,2}}{K_F+|u^{n,2}|} & 0 & 0 \\ 
0 & 0 & \displaystyle -\frac{C_{fz}K_z u^{n,2}}{K_F+|u^{n,2}|}+K_{mz} & 0 & 0 \\
0 & -K_{mf} & -K_{mz} & 0 & K_{rd}D(\theta^n) \\
0 & \displaystyle C_{oc}K_r- \frac{ C_{oc}  L(\theta^n)u^{n,1}}{K_N+|u^{n,1}|} & 0 & 0 & C_{oc}K_{rd}D(\theta^n) 
\end{array}\right]. \nonumber
\end{equation} 

\end{enumerate}

\subsection{Numerical resolution of the optimization problem}

Once developed above space-time discretization, as introduced in previous section, 
we obtain the following discrete optimization problem:
\begin{equation} \label{eq:discretecontrol}
({\mathcal{P}}) \qquad \min\{J(\mathbf{g}):\; \mathbf{g} \in \mathcal{U}\} \nonumber 
\end{equation}
In order to solve this nonlinear optimization problem, we will use the interior point algorithm IPOPT \cite{Biegler1} interfaced with 
the FreeFem++ code that we have developed. One of the requirements for using previous algorithm is the knowledge of functions that compute 
the gradient of the cost functional and the Jacobian matrix of the constraints. 

In the case of the cost functional, we have that its differential 
$\delta_{\mathbf{g}}J(\mathbf{g}) \in \mathcal{L}(\mathbb{R}^{N \times N_{CT}},\mathbb{R})$ is such that, 
for any $\delta \mathbf{g} = (\delta \mathbf{g}^1,\ldots, \delta \mathbf{g}^N) \in \mathbb{R}^{N \times N_{CT}}$:
\begin{equation} \label{eq:costediscreto}
\begin{array}{l}
\displaystyle
\delta_{\mathbf{g}}J(\mathbf{g}) (\delta \mathbf{g})= \sigma_1 \sum_{n=1}^N \sum_{k=1}^{N_{CT}} g^{n,k} \delta g^{n,k} \\
\displaystyle
\quad +\sigma_2 \sum_{n=1}^{N-1} \sum_{k=1}^{N_{CT}} (g^{n+1,n}-g^{n,k}) (\delta g^{n+1,n}-\delta g^{n,k}).
\end{array}
\end{equation}
Therefore, $[\nabla_{\mathbf{g}} J(\mathbf{g})]_i=
\delta_{\mathbf{g}} J(\mathbf{g}) (\mathbf{e}_i)$, where $\mathbf{e}_i$, $i=1,\ldots,N\times N_{CT}$, is the 
$i$-th vector of the canonical basis in $\mathbb{R}^{N \times N_{CT}}$. 

In the case of the Jacobian matrix of the constraints, we know that the differential associated to the application 
$\mathbf{G}:\mathcal{U}_{ad} \subset \mathbb{R}^{N \times N_{CT}} \rightarrow \mathbb{R}^N$ is such that 
$\delta_{\mathbf{g}} \mathbf{G}(\mathbf{g}) \in \mathcal{L}(\mathbb{R}^{N \times N_{CT}},\mathbb{R}^{N})$. 
So, given any element $\delta \mathbf{g} \in \mathbb{R}^{N \times N_{CT}}$, we have that
$\delta_{\mathbf{g}} \mathbf{G}(\mathbf{g}) (\delta \mathbf{g}) \in \mathbb{R}^N$, and the Jacobian matrix 
$J_{\mathbf{g}} \mathbf{G}(\mathbf{g}) \in  \mathcal{M}_{N \times (N_{CT} \times N)} $ is such 
that $[J_{\mathbf{g}} \mathbf{G}(\mathbf{g})]_{j,i}=\langle \delta_{\mathbf{g}} \mathbf{G}(\mathbf{g})(\mathbf{e}_i), \widetilde{\mathbf{e}}_j \rangle$, 
where $\widetilde{\mathbf{e}}_j$, $j=1,\ldots,N$, is the $j$-th vector of the canonical basis in $\mathbb{R}^{N}$. 
As above commented, for computing previous matrix we can use either the linearized state systems or the adjoint state systems. 
The choice of one method or another depends on the relation between the dimension of the space of controls 
($N \times N_{CT}$) and the dimension of the space where the application $\mathbf{G}$ takes values ($N$).
\begin{itemize}
\item When using the linearized systems, we would have to solve $N_{CT} \times N$ times these systems 
(in this case, we would compute the Jacobian matrix column by column):
\begin{equation} \nonumber
J_{\mathbf{g}} \mathbf{G}(\mathbf{g})=\left(\begin{array}{c|c|c|c|c}
\delta_{\mathbf{g}} \mathbf{G}(\mathbf{g})(\mathbf{e}_1) & \delta_{\mathbf{g}} \mathbf{G}(\mathbf{g})(\mathbf{e}_2) &\cdots & 
\delta_{\mathbf{g}} \mathbf{G}(\mathbf{g})(\mathbf{e}_{N_{CT}-1}) & \delta_{\mathbf{g}} \mathbf{G}(\mathbf{g})(\mathbf{e}_{N_{CT}}) 
\end{array} \right),
\end{equation}
where $\delta_{\mathbf{g}} \mathbf{G}(\mathbf{g})(\mathbf{e}_k) \in \mathcal{M}_{N \times 1}(\mathbb{R})$, for $k=1,\ldots,N_{CT}$. 
\item When employing the adjoint state systems, we would have to solve $N $ times these systems 
(now, we would compute the Jacobian row by row): 
\begin{equation} \nonumber
J_{\mathbf{g}} \mathbf{G}(\mathbf{g})(\delta \mathbf{g})=\left(\begin{array}{c}
\nabla_{\mathbf{g}} G^1(\mathbf{g}) \\ \hline 
\nabla_{\mathbf{g}} G^2(\mathbf{g}) \\ \hline
\vdots \\ \hline
\nabla_{\mathbf{g}} G^{N-1}(\mathbf{g})\\ \hline 
\nabla_{\mathbf{g}} G^N(\mathbf{g})
\end{array} \right),
\end{equation}
where $\nabla_{\mathbf{g}} G^i (\mathbf{g}) \in \mathbb{R}^{N_{CT} }$ is such that 
$[\nabla_{\mathbf{g}} G^j (\mathbf{g})]_i=\delta_{\mathbf{g}} G^j(\mathbf{g}) (\mathbf{e}_i)$, 
$j=1,\ldots,N$, $i=1,\ldots,N\times N_{CT}$. 
\end{itemize}
In our case, $N_{CT}>1$. So, it is more advantageous to employ the adjoint state systems and computing the Jacobian matrix 
row by row ($j=1,\ldots,N$). 
However, in order to obtain a computational expression for the Jacobian matrix using the adjoint systems it will be 
necessary deriving first a theoretical expression using the linearized systems and then applying a transposition procedure.

\begin{lemma}[Computing the Jacobian matrix using linearized systems]
Within the framework introduced in this Section, we have the following expression for the Jacobian matrix of the 
constraints using the linearized equations: Given an element $\delta \mathbf{g} \in \mathbb{R}^{N \times N_{CT}}$, then
\begin{equation} \nonumber
\delta_{\mathbf{g}} \mathbf{G}(\mathbf{g}) (\delta \mathbf{g})= 
\left(\begin{array}{c}
\displaystyle \frac{1}{\mu(\Omega_C)} \int_{\Omega_C} \delta u^{2,5} \, d\mathbf{x}  \\
\vdots \\
\displaystyle \frac{1}{\mu(\Omega_C)} \int_{\Omega_C} \delta u^{N+1,5} \, d\mathbf{x} \vspace{0.1cm} \\
\end{array}\right),
\end{equation} 
where $\{(\delta \mathbf{v}^n,\delta p^n)\}_{n=0}^{N} \subset \mathbf{V}_h \times M_h$, 
$\{\delta \theta^n\}_{n=0}^{N+1} \subset K_h$ and $\{\delta \mathbf{u}^n\}_{n=0}^{N+1} \subset \mathbf{X}_h$ 
are, respectively, the solutions of the linearized hydrodynamic model, the linearized thermic model and the linearized eutrophication model,
defined as:
\begin{itemize}
\item \textit{Linearized system for water velocity and pressure:} Given $(\delta \mathbf{v}^0,\delta p^0)=(\mathbf{0},0)$, 
for each $n=0,\ldots,N-1$, $(\delta \mathbf{v}^n,\delta p^n) \in \mathbf{V}_h \times M_h$, with
\begin{equation} 
\begin{array}{r}
\displaystyle
\delta \mathbf{v}^{n+1}_{|{T^k}}=-\frac{\delta g^{n+1,k}}{\mu(T^k)} \,\mathbf{n}, \quad
\delta \mathbf{v}^{n+1}_{|{C^k}}=\frac{\delta g^{n+1,k}}{\mu(C^k)} \,\mathbf{n}, \quad \forall k=1,\ldots,N_{CT},
\end{array}
\end{equation}
is the solution of:
\begin{equation} \label{eq:system1g}
\begin{array}{r}
\displaystyle 
\alpha \int_{\Omega} \delta \mathbf{v}^{n+1} \cdot \boldsymbol{\eta}\, d \mathbf{x}
+ \int_{\Omega} \beta(\mathbf{v}^n) e(\delta \mathbf{v}^{n+1}):e(\boldsymbol{\eta}) \, d \mathbf{x} 
- \int_{\Omega} \delta p^{n+1} \nabla \cdot \boldsymbol{\eta} \, d \mathbf{x} \\ 
\displaystyle
- \int_{\Omega} \nabla \cdot \delta \mathbf{v}^{n+1} q \, d \mathbf{x}
- \lambda \int_{\Omega} \delta p^{n+1} q \, d \mathbf{x}
= \alpha \int_{\Omega} (\delta \mathbf{v}^n \circ X^n_-) \cdot \boldsymbol{\eta}\, d \mathbf{x} \\ 
\displaystyle
- \int_{\Omega} (\nabla \mathbf{v}^n \circ X^n_-) \delta \mathbf{v}^{n} \cdot \boldsymbol{\eta}\, d \mathbf{x}
+\int_{\Omega} \alpha_0 \, \delta\theta^{n}\mathbf{a}_g \cdot \boldsymbol{\eta}\, d \mathbf{x} \\ 
\displaystyle
- \int_{\Omega} \gamma(\mathbf{v}^n) \, e(\mathbf{v}^n):e(\delta \mathbf{v}^n) \, e(\mathbf{v}^{n+1}):
e(\boldsymbol{\eta}) \, d \mathbf{x}, \quad \forall \boldsymbol{\eta} \in \mathbf{W}_h, \ \forall q \in M_h,
\end{array}
\end{equation}
where $\gamma(\mathbf{v}^n)=2\nu_{tur}[e(\mathbf{v}^n): e(\mathbf{v}^n)]^{-1/2}$.

\item \textit{Linearized system for water temperature:} Given $\delta \theta^0=0$, for 
each $n=0,\ldots,N$, $\delta \theta^{n+1} \in K_h$, with:
\begin{equation}  
\delta \theta^{n+1}_{|_{T^k}}=\frac{1}{\mu(C^k)} \int_{C^k} \delta\theta^{n} \, d \gamma, \quad \forall k=1,\ldots, N_{CT},
\end{equation}
is the solution of:
\begin{equation} \label{eq:system2g}
\begin{array}{r}
\displaystyle
\alpha \int_{\Omega} \delta \theta^{n+1} \eta \, d \mathbf{x} 
+ K \int_{\Omega} \nabla \delta \theta^{n+1} \cdot \nabla \eta \, d \mathbf{x} 
+ b_1^N \int_{\Gamma_N} \delta \theta^{n+1} \eta \, d \gamma \\ 
\displaystyle
+ b_1^S \int_{\Gamma_S} \delta \theta^{n+1} \eta \, d \gamma
= \alpha \int_{\Omega} (\delta \theta^n \circ X^n_-) \eta \, d\mathbf{x} \\ 
\displaystyle
- \int_{\Omega} (\nabla \theta^n \circ X^n_-) \cdot \delta \mathbf{v}^n \eta \, d \mathbf{x}
- 4 b_2^S\int_{\Gamma_S} |\theta^n|^3 \delta \theta^n \eta \, d \gamma, \quad \forall \eta \in H_h.
\end{array}
\end{equation}

\item \textit{Linearized system for eutrophication model:} Given $\delta \mathbf{u}^0=\mathbf{0}$, 
for each $n=0,\ldots,N$, $\delta \mathbf{u}^{n+1} \in \mathbf{X}_h$, with:
\begin{equation}
\delta \mathbf{u}^{n+1}_{|_{T^k}}= \frac{1}{\mu(C^k)} \int_{C^k} \delta \mathbf{u}^n \,d\gamma,  \quad \forall k=1,\ldots,N_{CT},
\end{equation}
is the solution of:
\begin{equation} \label{eq:system3g}
\begin{array}{r}
\displaystyle  
\alpha \int_{\Omega} \delta \mathbf{u}^{n+1} \cdot \boldsymbol{\eta} \, d \mathbf{x}
+ \int_{\Omega} \Lambda_{\mu}\nabla \delta \mathbf{u}^{n+1} : \nabla \boldsymbol{\eta} \, d \mathbf{x} \\ 
\displaystyle
+ \int_{\Omega} \mathbf{A}^n(\theta^n,\mathbf{u}^n) \delta \mathbf{u}^{n+1} \cdot \boldsymbol{\eta} \, d \mathbf{x} 
= \alpha \int_{\Omega} (\delta \mathbf{u}^n \circ X^n_-) \cdot \boldsymbol{\eta} \, d \mathbf{x} \\ 
\displaystyle
- \int_{\Omega}(\nabla \mathbf{u}^n \circ X^n_-) \delta \mathbf{v}^n \cdot \boldsymbol{\eta}\, d \mathbf{x} 
- \int_{\Omega} \delta_{\theta} \mathbf{A}^n(\theta^n,\mathbf{u}^n) (\delta \theta^n) \mathbf{u}^{n+1} \cdot \boldsymbol{\eta}\, d \mathbf{x} \\ 
\displaystyle
- \int_{\Omega} \delta_{\mathbf{u}} \mathbf{A}^n(\theta^n,\mathbf{u}^n)(\delta \mathbf{u}^n) 
\mathbf{u}^{n+1} \cdot \boldsymbol{\eta}\, d \mathbf{x},\quad \forall \boldsymbol{\eta} \in \mathbf{Z}_h.
\end{array}
\end{equation}
\end{itemize}
\end{lemma}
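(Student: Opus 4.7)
The plan is to view the map $\mathbf{g}\mapsto \mathbf{G}(\mathbf{g})$ as the composition of three successive state maps (velocity/pressure, temperature, species), differentiate each of them in the sense of Gâteaux in the direction $\delta\mathbf{g}$, and read off the resulting system. Since $G^n(\mathbf{g})=\frac{1}{\mu(\Omega_C)}\int_{\Omega_C} u^{n+1,5}\,d\mathbf{x}$ is linear in $\mathbf{u}^{n+1}$, once the chain rule is set up we will have $[\delta_{\mathbf{g}}\mathbf{G}(\mathbf{g})(\delta\mathbf{g})]_n = \frac{1}{\mu(\Omega_C)}\int_{\Omega_C} \delta u^{n+1,5}\,d\mathbf{x}$, which is exactly the claimed expression.

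First I would differentiate the discretized Navier–Stokes–Smagorinsky system (\ref{eq:system1f}) with respect to $\mathbf{g}$. Using $\delta\mathbf{v}^{n+1}|_{T^k}=-\frac{\delta g^{n+1,k}}{\mu(T^k)}\mathbf{n}$ and $\delta\mathbf{v}^{n+1}|_{C^k}=\frac{\delta g^{n+1,k}}{\mu(C^k)}\mathbf{n}$ on the boundary, and $\delta\mathbf{v}^0=\mathbf{0}$, $\delta p^0=0$, the linearization of the term $\beta(\mathbf{v}^n)\epsilon(\mathbf{v}^{n+1})$ produces two contributions: the frozen term $\beta(\mathbf{v}^n)\epsilon(\delta\mathbf{v}^{n+1})$ on the left-hand side, and the lagged derivative of $\beta$ in the direction $\delta\mathbf{v}^n$, which yields $\gamma(\mathbf{v}^n)\,\epsilon(\mathbf{v}^n){:}\epsilon(\delta\mathbf{v}^n)\,\epsilon(\mathbf{v}^{n+1})$ on the right-hand side. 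For the advection term written as $\mathbf{v}^n\circ X_-^n \simeq \mathbf{v}^n(\mathbf{x}-\Delta t\,\mathbf{v}^n(\mathbf{x}))$, differentiation yields two pieces: $\delta\mathbf{v}^n\circ X_-^n$ and $-(\nabla\mathbf{v}^n\circ X_-^n)\delta\mathbf{v}^n$ (absorbing the $\Delta t = 1/\alpha$ factor into $\alpha$). The buoyancy term contributes $\alpha_0\,\delta\theta^n\,\mathbf{a}_g$. Collecting these and keeping the penalty term $\lambda\int_\Omega \delta p^{n+1} q$ gives exactly (\ref{eq:system1g}).

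Next I would linearize (\ref{eq:system2f}). The only nonlinearities are the Lagrangian advection, treated as above and producing $\delta\theta^n\circ X_-^n$ together with $-(\nabla\theta^n\circ X_-^n)\cdot\delta\mathbf{v}^n$, and the radiation boundary term $|\theta^n|^3\theta^n$, whose differential is $4|\theta^n|^3\delta\theta^n$; the factor $b_2^S$ together with the sign flip from moving the term to the right-hand side explains the $-4b_2^S\int_{\Gamma_S}|\theta^n|^3\delta\theta^n\eta$ contribution in (\ref{eq:system2g}). For the species system (\ref{eq:system3f}), the reaction matrix $\mathbf{A}^n(\theta^n,\mathbf{u}^n)\mathbf{u}^{n+1}$ is bilinear up to the Michaelis–Menten fractions $u^{n,1}/(K_N+|u^{n,1}|)$ and $u^{n,2}/(K_F+|u^{n,2}|)$ which are smooth, so one obtains the natural linearization $\mathbf{A}^n(\theta^n,\mathbf{u}^n)\delta\mathbf{u}^{n+1}$ on the left, and $\delta_\theta\mathbf{A}^n(\delta\theta^n)\mathbf{u}^{n+1}+\delta_{\mathbf{u}}\mathbf{A}^n(\delta\mathbf{u}^n)\mathbf{u}^{n+1}$ on the right, together with the same two advection contributions and the Dirichlet trace transfer from collectors to injectors. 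This gives (\ref{eq:system3g}).

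Finally, the full chain rule amounts to propagating $\delta\mathbf{g}\mapsto\delta\mathbf{v}\mapsto(\delta\mathbf{v},\delta\theta)\mapsto\delta\mathbf{u}$ through the dependence scheme of Figure~\ref{dscheme}. The main technical point is to justify that each discrete state map is indeed Gâteaux differentiable with the claimed derivative; this reduces to checking that the Stokes saddle-point operator on the left of (\ref{eq:system1g}) and the elliptic operators on the left of (\ref{eq:system2g})–(\ref{eq:system3g}) are invertible, which holds by coercivity (and the penalty $\lambda>0$ in the Stokes step) under the standing hypotheses, together with differentiability of the composition $\phi^n\mapsto\phi^n\circ X_-^n$ through the Euler trajectory approximation. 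I expect the Smagorinsky term to be the only delicate step, since $\gamma(\mathbf{v}^n)$ blows up when $\epsilon(\mathbf{v}^n)=0$; however, it always appears in the product $\gamma(\mathbf{v}^n)\epsilon(\mathbf{v}^n){:}\epsilon(\delta\mathbf{v}^n)$, which equals $2\nu_{tur}[\epsilon(\mathbf{v}^n){:}\epsilon(\mathbf{v}^n)]^{-1/2}\epsilon(\mathbf{v}^n){:}\epsilon(\delta\mathbf{v}^n)$ and is thus bounded by $2\nu_{tur}|\epsilon(\delta\mathbf{v}^n)|$ almost everywhere, so the linearization is meaningful on the set where $\epsilon(\mathbf{v}^n)\neq 0$ and extends by continuity elsewhere. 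Gathering the $N$ resulting expressions componentwise for $n=1,\ldots,N$ yields the stated formula.
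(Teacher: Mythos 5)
Your proposal is correct and follows essentially the same route as the paper: a term-by-term G\^ateaux differentiation of the discretized systems, with the chain rule applied to the semi-Lagrangian composition $\phi^n\circ X_-^n\simeq\phi^n(\mathbf{x}-\Delta t\,\mathbf{v}^n(\mathbf{x}))$ producing the pair $\delta\phi^n\circ X_-^n$ and $-(\nabla\phi^n\circ X_-^n)\delta\mathbf{v}^n$ (with $\alpha\Delta t=1$ absorbing the factor), and the differentiability of $x\mapsto x|x|^3$ handling the radiation term, which are exactly the two points the paper's (much terser) proof singles out. Your additional remarks on the degeneracy of $\gamma(\mathbf{v}^n)$ at $\epsilon(\mathbf{v}^n)=0$ and on the invertibility of the discrete operators go beyond what the paper records but do not change the argument.
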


\begin{proof}
The proof is straightforward, where the only drawback is related to the computation of terms of the type
$\delta_{\mathbf{g}}\left( \boldsymbol{\varphi}(\mathbf{g},\mathbf{x}-\Delta t\, \mathbf{v}(\mathbf{g},\mathbf{x}))\right)(\delta g)$,
where $\boldsymbol{\varphi}(\mathbf{g},\mathbf{x})$ and $\mathbf{v}(\mathbf{g},\mathbf{x})$
are vector functions smooth enough (the scalar case would be analogous). Nevertheless, using the chain rule, we can easily obtain that:
\begin{equation} \nonumber
\begin{array}{l}
\displaystyle
\delta_{\mathbf{g}}\left( \boldsymbol{\varphi}(\mathbf{g},\mathbf{x}-\Delta t\, \mathbf{v}(\mathbf{g},\mathbf{x}))\right)(\delta g) \\
\displaystyle 
\qquad = \delta_{\mathbf{g}}  \boldsymbol{\varphi}(\mathbf{g},\mathbf{x}-\Delta t\, \mathbf{v}(\mathbf{g},\mathbf{x}))(\delta g) 
- \Delta t\, \delta_{\mathbf{x}} \boldsymbol{\varphi}(\mathbf{g},\mathbf{x}-\Delta t\, \mathbf{v}(\mathbf{g},\mathbf{x})) 
(\delta_{\mathbf{g}}\mathbf{v}(\mathbf{g},\mathbf{x}) (\delta \mathbf{g}) ) \\ 
\displaystyle
\qquad \equiv (\delta \boldsymbol{\varphi} \circ X_ -) - \Delta t\, (\nabla \boldsymbol{\varphi} \circ X_-) \delta \mathbf{v}.
\end{array}
\end{equation}
(We must note here that, in our specific formulation, we deal with the function $b:x \in \mathbb{R} \rightarrow b(x)=x \, |x|^3$, 
that is differentiable in $\mathbb{R}$, with $b'(x)=4\, |x|^3$).   \qed
\end{proof}

\begin{lemma}[Computing the Jacobian matrix using the adjoint equations] 
Within the framework introduced in this Section, we have the following expression for the Jacobian matrix of the 
constraints using the adjoint systems: 
For each row $k=1,\ldots,N$, the matrices $\{\nabla_{\mathbf{g}^n} G^k (\mathbf{g})\}_{n=1}^k \subset 
\mathcal{M}_{1 \times N_{CT}}(\mathbb{R})$ can be computed using the following expressions:
\begin{itemize}
\item If $n \in \{1,\ldots,k\}\setminus\{N\}$, 
\begin{equation} \nonumber
\begin{array}{rcl}
\displaystyle 
\delta_{\mathbf{g}^n} G^k(\mathbf{g}) (\delta \mathbf{g}^n) &=&
\displaystyle \sum_{i=1}^{N_{CT}} \frac{\delta g^{n,i}}{\mu(T^i)} \int_{T^i} \beta(\mathbf{v}^{n-1}) e(\mathbf{w}^{n-1}) 
\mathbf{n} \cdot \mathbf{n} - q^{n-1}\, d \gamma \\ 
&+& \displaystyle \sum_{i=1}^{N_{CT}} \frac{\delta g^{n,i}}{\mu(C^i)} \int_{C^i} q^{n-1}-
\beta(\mathbf{v}^{n-1})e(\mathbf{w}^{n-1}) \mathbf{n} \cdot \mathbf{n}  \, d \gamma \\ 
&+& \displaystyle \sum_{i=1}^{N_{CT}} \frac{\delta g^{n,i}}{\mu(T^i)} \int_{T^i} \gamma(\mathbf{v}^n)
e(\mathbf{v}^{n+1}): e(\mathbf{w}^{n}) e(\mathbf{v}^n)\mathbf{n} \cdot \mathbf{n} \, d \gamma \\ 
&-& \displaystyle \sum_{i=1}^{N_{CT}} \frac{\delta g^{n,i}}{\mu(C^i)} \int_{C^i} \gamma(\mathbf{v}^n)
e(\mathbf{v}^{n+1}): e(\mathbf{w}^{n}) e(\mathbf{v}^n)\mathbf{n} \cdot \mathbf{n} \, d \gamma.
\end{array}
\end{equation}
\item If $n=N$,
\begin{equation} \nonumber
\begin{array}{rcl}
\displaystyle 
\delta_{\mathbf{g}^n} G^k(\mathbf{g}) (\delta \mathbf{g}^n) &=& \displaystyle \sum_{i=1}^{N_{CT}} \frac{\delta g^{n,i}}{\mu(T^i)}
\int_{T^i} \beta(\mathbf{v}^{n-1}) e(\mathbf{w}^{n-1}) \mathbf{n} \cdot \mathbf{n} - q^{n-1}\, d \gamma \\ 
&+& \displaystyle \sum_{i=1}^{N_{CT}} \frac{\delta g^{n,i}}{\mu(C^i)} \int_{C^i} q^{n-1}-
\beta(\mathbf{v}^{n-1})e(\mathbf{w}^{n-1}) \mathbf{n} \cdot \mathbf{n}  \, d \gamma,
\end{array}
\end{equation} 
\end{itemize}
where if we introduce, for each row $k=1,\ldots,N$, the following vector (defined from the usual Kronecker delta $\delta_{ij}$
and the indicator function of subset $\Omega_C$): 
\begin{equation} \nonumber
\mathbf{H}^{n+1}_k=\left( 0 , 0 , 0 , 0 ,  \frac{1}{\mu(\Omega_C)} \, \chi_{\Omega_C} \, \delta_{kn} \right) 
\in \mathcal{M}_{1\times 5}(\mathbb{R}), \quad n=0,\ldots,N,
\end{equation} 
then the adjoint states associated to the eutrophication system $\{\mathbf{z}^n\}_{n=0}^{N+1} \subset \mathbf{Z}_h$, 
to the hydrodynamic system $\{(\mathbf{w}^n,q^n)\}_{n=0}^N \subset \mathbf{W}_h \times M_h$, and to the 
temperature system $\{\xi^n\}_{n=0}^{N+1}\subset H_h$ are, respectively, the solution of the following systems: 

\begin{itemize}
\item \textit{Adjoint system for eutrophication model}:
\begin{itemize}
\item For $n=N+1$, $\mathbf{z}^n=\mathbf{0}$.
\item For $n=N$, $\mathbf{z}^n \in \mathbf{Z}_h$ is such that:
\begin{equation} \label{eq:system3i-1} \hspace{-1.1cm}
\left\{\begin{array}{l}
\displaystyle
\alpha \mathbf{z}^n - \nabla \cdot (\Lambda_{\mu} \nabla \mathbf{z}^n)+ \mathbf{A}^n(\theta^n,\mathbf{u}^n)^T \mathbf{z}^n =
\alpha (\mathbf{z}^{n+1} \circ  \mathbf{X}_+^{n+1}) + \mathbf{H}^{n+1}_k \ \mbox{in} \; \Omega, \\ 
\displaystyle
\mathbf{z}^n=\mathbf{0} \quad \mbox{on} \; \Gamma_T,  \\ 
\displaystyle
\Lambda_{\mu} \nabla \mathbf{z}^n\mathbf{n}=\mathbf{0} \quad \mbox{on} \; \partial \Omega \setminus(\Gamma_T \cup \Gamma_C) \\ 
\displaystyle
\Lambda_{\mu} \nabla \mathbf{z}^n\mathbf{n} = -\frac{1}{\mu(C^k)} \int_{T^k} \Lambda_{\mu} \nabla 
\mathbf{z}^{n+1} \mathbf{n} \, d \gamma' \quad \mbox{on} \; C^k, \ k=1,\ldots,N_{CT},
\end{array} \right.
\end{equation}
where $\mathbf{X}_+^{n+1}(\mathbf{x})=\mathbf{x}+\Delta t \, \mathbf{v}^{n+1}$.
\item For $n=N-1,\ldots,0$, $\mathbf{z}^n \in \mathbf{Z}_h$ is such that:
\begin{equation} \label{eq:system3i-2} \hspace{-.6cm}
\left\{\begin{array}{l}
\displaystyle
\alpha \mathbf{z}^n - \nabla \cdot (\Lambda_{\mu} \nabla \mathbf{z}^n) + \mathbf{A}^n(\theta^n,\mathbf{u}^n)^T \mathbf{z}^n =
\alpha (\mathbf{z}^{n+1} \circ  \mathbf{X}_+^{n+1}) + \mathbf{H}^{n+1}_k \\ 
\quad \displaystyle
-\sum_{l=1}^{5} [\nabla_{\mathbf{u}} A_l^{n+1}(\theta^{n+1},\mathbf{u}^{n+1})]^T \mathbf{u}^{n+2} z^{n+1,l} \quad \mbox{in} \;  \Omega, \\ 
\displaystyle
\mathbf{z}^n=\mathbf{0} \quad \mbox{on} \; \Gamma_T, \\ 
\displaystyle
\Lambda_{\mu} \nabla \mathbf{z}^n\mathbf{n}=\mathbf{0} \quad \mbox{on} \; \partial \Omega \setminus(\Gamma_T \cup \Gamma_C), \\ 
\displaystyle
\Lambda_{\mu} \nabla \mathbf{z}^n\mathbf{n} = -\frac{1}{\mu(C^k)} \int_{T^k} \Lambda_{\mu} \nabla 
\mathbf{z}^{n+1} \mathbf{n} \, d \gamma' \quad \mbox{on} \; C^k, \ k=1,\ldots,N_{CT},
\end{array} \right.
\end{equation}
\end{itemize}

\item \textit{Adjoint system for water temperature:} 
\begin{itemize}
\item For $n=N+1$, $\xi^n=0$.
\item For $n=N$, $\xi^n \in H_h$ is such that:
\begin{equation} \label{eq:system2i-1} \hspace{-.3cm}
\left\{\begin{array}{l}
\displaystyle 
\alpha \xi^n -\nabla \cdot (K \nabla \xi^n) = \alpha (\xi^{n+1}\circ X_+^{n+1}) \quad \mbox{in} \; \Omega, \\
\displaystyle
\xi^n=0 \quad \mbox{on} \; \Gamma_T, \\
\displaystyle
K \nabla \xi^n \cdot \mathbf{n}= -b_1^S \xi^n-4b_2^S|\theta^{n+1}|^3 \xi^{n+1} \quad \mbox{on} \; \Gamma_S, \\
\displaystyle
K \nabla \xi^n \cdot \mathbf{n}= -b_1^N \xi^n \quad \mbox{on} \; \Gamma_N, \\
\displaystyle
K \nabla \xi^n \cdot \mathbf{n}=-\frac{K}{\mu(C^k)} \int_{T^k} \nabla \xi^{n+1}\cdot \mathbf{n}\, d \gamma' \quad \mbox{on} \; C^k,\ k=1,\ldots,N_{CT}.
\end{array}\right.
\end{equation}
\item For $n=N-1,\ldots,0$, $\xi^n \in H_h$ is such that:
\begin{equation} \label{eq:system2i-2} \hspace{-.3cm}
\left\{\begin{array}{l}
\displaystyle 
\alpha \xi^n -\nabla \cdot (K \nabla \xi^n) = \alpha (\xi^{n+1}\circ X_+^{n+1}) \\
\displaystyle
\quad -\frac{d}{d\theta} \mathbf{A}^{n+1} (\theta^{n+1},\mathbf{u}^{n+1}) \mathbf{u}^{n+2} \cdot \mathbf{z}^{n+1}
+\alpha_0 \mathbf{a}_{g} \cdot \mathbf{w}^{n+1} \quad \mbox{in} \; \Omega, \\
\displaystyle
\xi^n=0 \quad \mbox{on} \; \Gamma_T, \\
\displaystyle
K \nabla \xi^n \cdot \mathbf{n}= -b_1^S \xi^n-4b_2^S|\theta^{n+1}|^3 \xi^{n+1} \quad \mbox{on} \; \Gamma_S, \\
\displaystyle
K \nabla \xi^n \cdot \mathbf{n}= -b_1^N \xi^n \quad \mbox{on} \; \Gamma_N, \\
\displaystyle
K \nabla \xi^n \cdot \mathbf{n}=-\frac{K}{\mu(C^k)} \int_{T^k} \nabla \xi^{n+1}\cdot \mathbf{n}\, d \gamma' \quad \mbox{on} \; C^k,\ k=1,\ldots,N_{CT}.
\end{array} \right.
\end{equation}
\end{itemize}

\item \textit{Adjoint system for water velocity and pressure:}
\begin{itemize}
\item For $n=N$, $(\mathbf{w}^n,q^0)=(\mathbf{0},0)$.
\item For $n=N-1$, $(\mathbf{w}^n,q^n)\in \mathbf{W}_h\times M_h$ 
is such that:
\begin{equation}\label{eq:system1i-1}
\left\{\begin{array}{l}
\displaystyle 
\alpha \mathbf{w}^n - {\rm div} (\beta(\mathbf{v}^n)e(\mathbf{w}^n))+\nabla q^n=\alpha(\mathbf{w}^{n+1} \circ X_+^{n+1}) \\
\displaystyle
\quad - (\nabla \mathbf{v}^{n+1} \circ X_-^{n+1})^T\mathbf{w}^{n+1} -(\nabla \mathbf{u}^{n+1} \circ X_-^{n+1})^T \mathbf{z}^{n+1} \\
\displaystyle
\quad -(\nabla \theta^{n+1} \circ X_-^{n+1})^T \xi^{n+1} \quad \mbox{in} \; \Omega, \\ 
\displaystyle
\nabla \cdot \mathbf{w}^n+\lambda q^n=0 \quad \mbox{in} \; \Omega, \\ 
\displaystyle
\mathbf{w}^n=0 \quad \mbox{on} \; \partial \Omega.
\end{array} \right.
\end{equation}
\item For $n=N-2,\ldots,0$, $(\mathbf{w}^n,q^n)\in \mathbf{W}_h\times M_h$ is such that:
\begin{equation}\label{eq:system1i-2}
\left\{\begin{array}{l}
\displaystyle 
\alpha \mathbf{w}^n - {\rm div} (\beta(\mathbf{v}^n)e(\mathbf{w}^n))+\nabla q^n \\
\displaystyle
\quad =\alpha(\mathbf{w}^{n+1} \circ X_+^{n+1}) - (\nabla \mathbf{v}^{n+1} \circ X_-^{n+1})^T\mathbf{w}^{n+1}  \\
\displaystyle
\quad -(\nabla \mathbf{u}^{n+1} \circ X_-^{n+1})^T \mathbf{z}^{n+1} - (\nabla \theta^{n+1} \circ X_-^{n+1})^T \xi^{n+1} \\
\displaystyle
\quad +\nabla \cdot (\gamma(\mathbf{v}^{n+1}) e(\mathbf{v}^{n+2}):
e(\mathbf{w}^{n+1}) e(\mathbf{v}^{n+1})) \quad \mbox{in} \; \Omega, \\ 
\displaystyle
\nabla \cdot \mathbf{w}^n+\lambda q^n=0 \quad \mbox{in} \; \Omega, \\ 
\displaystyle
\mathbf{w}^n=0 \quad \mbox{on} \; \partial \Omega.
\end{array} \right.
\end{equation}
\end{itemize}
\end{itemize}
\end{lemma}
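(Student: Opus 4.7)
The plan is a standard adjoint-state computation, grounded in the identity supplied by the preceding lemma. I first rewrite the target quantity as a summation that can be paired with adjoint variables: since $\mathbf{H}^{n+1}_k$ vanishes for $n\neq k$, one has
\begin{equation*}
\delta_{\mathbf{g}} G^k(\mathbf{g})(\delta \mathbf{g}) \;=\; \frac{1}{\mu(\Omega_C)}\int_{\Omega_C}\delta u^{k+1,5}\,d\mathbf{x}\;=\;\sum_{n=0}^{N}\int_{\Omega}\mathbf{H}^{n+1}_k\cdot\delta\mathbf{u}^{n+1}\,d\mathbf{x}.
\end{equation*}
The strategy is then to transfer these $\delta\mathbf{u}^{n+1}$ factors onto adjoint test functions by exploiting the linearized systems (\ref{eq:system1g})--(\ref{eq:system3g}), and to identify the adjoint systems by demanding that every interior contribution involving $\delta\mathbf{u}^{n+1}$, $\delta\theta^{n+1}$, $\delta\mathbf{v}^{n+1}$ cancels, leaving only boundary data from $\delta\mathbf{v}^{n+1}$ on $\Gamma_T\cup\Gamma_C$, which are prescribed by $\delta\mathbf{g}$.

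The central computation proceeds in three chained dualities. First, I test (\ref{eq:system3g}) against $\mathbf{z}^n\in\mathbf{Z}_h$ and sum over $n=0,\dots,N$. A discrete summation by parts in time transforms $\alpha(\delta\mathbf{u}^{n+1}-\delta\mathbf{u}^n\circ X^n_-)\cdot\mathbf{z}^n$ into $\alpha\bigl(\mathbf{z}^n-\mathbf{z}^{n+1}\circ X^{n+1}_+\bigr)\cdot\delta\mathbf{u}^{n+1}$, while spatial integration by parts converts $\Lambda_\mu\nabla\delta\mathbf{u}^{n+1}:\nabla\mathbf{z}^n$ and the linearized reaction terms into $\bigl[-\nabla\cdot(\Lambda_\mu\nabla\mathbf{z}^n)+\mathbf{A}^n(\theta^n,\mathbf{u}^n)^T\mathbf{z}^n\bigr]\cdot\delta\mathbf{u}^{n+1}$. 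The nonhomogeneous Dirichlet coupling $\delta\mathbf{u}^{n+1}_{|T^k}=\frac{1}{\mu(C^k)}\int_{C^k}\delta\mathbf{u}^n\,d\gamma$ is accommodated by imposing on $\mathbf{z}^n$ the Neumann condition involving $\int_{T^k}\Lambda_\mu\nabla\mathbf{z}^{n+1}\mathbf{n}\,d\gamma'$ on $C^k$, which is precisely what mirrors the averaging operator under duality. Choosing $\mathbf{z}^n$ as the solution of (\ref{eq:system3i-1})--(\ref{eq:system3i-2}) makes the net coefficient of $\delta\mathbf{u}^{n+1}$ equal to $\mathbf{H}^{n+1}_k$ plus a correction term coming from the $\delta_{\mathbf{u}}\mathbf{A}^n$ contribution at the next time level, and the sum collapses up to source residuals in $\delta\theta^n$ and $\delta\mathbf{v}^n$.

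Second, those residual terms in $\delta\theta^n$ (from $\delta_\theta\mathbf{A}^n(\theta^n,\mathbf{u}^n)(\delta\theta^n)\mathbf{u}^{n+1}\cdot\mathbf{z}^n$) are absorbed by a completely analogous duality performed on (\ref{eq:system2g}) using $\xi^n\in H_h$; the source $-\frac{d}{d\theta}\mathbf{A}^{n+1}(\theta^{n+1},\mathbf{u}^{n+1})\mathbf{u}^{n+2}\cdot\mathbf{z}^{n+1}$ appearing in (\ref{eq:system2i-2}) is designed precisely to cancel that residue, while the nonlinear radiative boundary term on $\Gamma_S$ generates the $-4b_2^S|\theta^{n+1}|^3\xi^{n+1}$ Robin-like contribution. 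After this step the $\delta\theta^n$-contributions vanish and only $\delta\mathbf{v}^n$ residuals remain, which are then eliminated by the dual pairing of (\ref{eq:system1g}) with $(\mathbf{w}^n,q^n)\in\mathbf{W}_h\times M_h$; here the Buoyancy term $\alpha_0\mathbf{a}_g\cdot\mathbf{w}^{n+1}$ in (\ref{eq:system2i-2}) closes the temperature-velocity loop, and the nonlinear Smagorinsky term produces the additional divergence source $\nabla\cdot(\gamma(\mathbf{v}^{n+1})\epsilon(\mathbf{v}^{n+2}){:}\epsilon(\mathbf{w}^{n+1})\epsilon(\mathbf{v}^{n+1}))$ that appears in (\ref{eq:system1i-2}).

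After all interior cancellations, the only surviving contributions are the boundary integrals on $\Gamma_T\cup\Gamma_C$ coming from the integration by parts of the hydrodynamic linearized system (where $\mathbf{w}^n$ does not vanish and $\delta\mathbf{v}^{n+1}$ is nonzero). Substituting $\delta\mathbf{v}^{n+1}_{|T^k}=-\frac{\delta g^{n+1,k}}{\mu(T^k)}\mathbf{n}$ and $\delta\mathbf{v}^{n+1}_{|C^k}=\frac{\delta g^{n+1,k}}{\mu(C^k)}\mathbf{n}$ yields exactly the stated formulae, with the $\gamma(\mathbf{v}^n)$-contributions absent at $n=N$ because the adjoint index $n+1$ would exceed the terminal time $N$. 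The main obstacle is not any single calculation but the careful tracking of indices under the time-shift by characteristics: one must verify the discrete adjoint identity $\sum_n\int_\Omega(\varphi^n\circ X^n_-)\psi^n\,d\mathbf{x}=\sum_n\int_\Omega\varphi^{n+1}(\psi^{n+1}\circ X^{n+1}_+)\,d\mathbf{x}$ up to Jacobian corrections negligible at the discretization order considered, and consistently match the coupling $T^k\leftrightarrow C^k$ between forward Dirichlet and adjoint Neumann conditions across consecutive time steps, paying particular attention to the terminal step $n=N$ where the downstream source terms in (\ref{eq:system3i-1}), (\ref{eq:system2i-1}) and (\ref{eq:system1i-1}) must be truncated.
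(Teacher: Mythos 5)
Your proposal follows essentially the same route as the paper: testing the linearized systems (\ref{eq:system1g}), (\ref{eq:system2g}) and (\ref{eq:system3g}) against the adjoint states, summing over $n$ with the terminal conditions $\mathbf{w}^N=\mathbf{0}$, $\xi^{N+1}=0$, $\mathbf{z}^{N+1}=\mathbf{0}$, transposing the time-shift along characteristics and the $T^k\leftrightarrow C^k$ coupling, and letting the interior cancellations leave only the boundary integrals in $\delta g^{n+1,k}$, with the final identification $\sum_{n=0}^{N}\int_{\Omega}\mathbf{H}^{n+1}_k\cdot\delta\mathbf{u}^{n+1}\,d\mathbf{x}=\frac{1}{\mu(\Omega_C)}\int_{\Omega_C}\delta u^{k+1,5}\,d\mathbf{x}$. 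Your explicit attention to the truncation of the $\gamma(\mathbf{v}^n)$ terms at $n=N$ and to the discrete adjoint identity for the characteristic shifts is consistent with (and slightly more careful than) the paper's ``straightforward computations.''
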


\begin{remark} 
In order to simplify the proof of above Lemma, we have established the adjoint systems 
(\ref{eq:system1i-1})-(\ref{eq:system1i-2}), (\ref{eq:system2i-1})-(\ref{eq:system2i-2}) and (\ref{eq:system3i-1})-(\ref{eq:system3i-2}) 
in a strong formulation (contrary to the case of the linearized systems (\ref{eq:system1g}), (\ref{eq:system2g}) and (\ref{eq:system3g}), 
where we have proposed a variational formulation). It is also clear that these adjoint systems easily admits a variational formulation, 
but we have chosen to formulate them in a strong form for a better understanding of the demonstration.  \qed
\end{remark}

\begin{proof} 
Let us consider as a test functions in the linearized systems (\ref{eq:system1g}), (\ref{eq:system2g}) and (\ref{eq:system3g}), respectively, 
the $n$-th component of the sequences $\{(\mathbf{w}^n,q^n)\}_{n=0}^N \subset \mathbf{W}_h\times M_h$, 
$\{\xi^n\}_{n=0}^{N+1} \subset H_h$ and $\{\mathbf{z}^n\}_{n=0}^N \subset \mathbf{Z}_h$, 
such that $\mathbf{w}^N=\mathbf{0}$, $q^N=0$, $\xi^{N+1}=0$ and $\mathbf{z}^{N+1}=\mathbf{0}$, and let us sum in $n$ from $0$ to $N$. 
Then, after some straightforward computations, taking into account the final conditions for the adjoint systems and the initial 
conditions for the linearized ones, we have:
\begin{itemize}
\item For eutrophication model:
\begin{equation} \label{eq:system3i} \hspace{-.7cm}
\begin{array}{l}
\displaystyle
\sum_{n=0}^{N} \bigg[ \alpha \int_{\Omega} \mathbf{z}^n \cdot \delta \mathbf{u}^{n+1} \, d \mathbf{x}
- \int_{\Omega} \nabla \cdot (\Lambda_{\mu} \nabla \mathbf{z}^{n}) \cdot \delta \mathbf{u}^{n+1} \, d \mathbf{x} \\ 
\displaystyle 
\quad +\int_{\Omega} \mathbf{A}^n(\theta^n,\mathbf{u}^n)^T \mathbf{z}^{n} \cdot \delta \mathbf{u}^{n+1} \, d \mathbf{x} \bigg] 
= \sum_{n=0}^{N} \bigg[ \alpha \int_{\Omega} (\mathbf{z}^{n+1} \circ X_+^{n+1}) \cdot \delta \mathbf{u}^{n+1} \, d \mathbf{x} \\ 
\displaystyle 
\quad -\int_{\Omega} (\nabla \mathbf{u}^{n+1} \circ X^{n+1}_-)^T \mathbf{z}^{n+1} \cdot \delta \mathbf{v}^{n+1} \, d \mathbf{x} \\ 
\displaystyle 
\quad -\sum_{k=1}^{N_{CT}} \int_{C^k}\left( \Lambda_{\mu} \nabla \mathbf{z}^n \mathbf{n}+\frac{1}{\mu(C^k)} \int_{T^k} \Lambda_{\mu}
\nabla \mathbf{z}^{n+1} \mathbf{n} \, d \gamma' \right) \cdot \delta \mathbf{u}^{n+1} \, d \gamma \bigg] \\ 
\displaystyle 
\quad -\sum_{n=0}^{N-1} \bigg[ \int_{\Omega} \left(\frac{d}{d \theta} \mathbf{A}^{n+1}(\theta^{n+1},\mathbf{u}^{n+1}) \mathbf{u}^{n+2} \cdot 
\mathbf{z}^{n+1}\right) \delta \theta^{n+1}\, d \mathbf{x}  \\ 
\displaystyle 
\quad +\int_{\Omega} \left( \sum_{l=1}^5 [\nabla_{\mathbf{u}} A_l^{n+1}(\theta^{n+1},\mathbf{u}^{n+1})]^T
\mathbf{u}^{n+2} z^{n+1,l}\right) \cdot \delta \mathbf{u}^{n+1} \, d \mathbf{x} \bigg],
\end{array}
\end{equation}
with $\mathbf{X}_+^{n+1}(\mathbf{x})=\mathbf{x}+\Delta t \, \mathbf{v}^{n+1}$, and where we are assuming 
$\delta \mathbf{v}^{N+1}=\mathbf{0}$ in order to simplify the notation.

\item For water temperature:
\begin{equation} \label{eq:system2i} 
\begin{array}{l}
\displaystyle 
\sum_{n=0}^{N} \bigg[\alpha \int_{\Omega} \xi^n \delta \theta^{n+1} \, d \mathbf{x} 
-\int_{\Omega} \nabla \cdot (K \nabla \xi^n) \delta \theta^{n+1} \, d \mathbf{x} \\ 
\displaystyle 
\quad +\int_{\Gamma_N} \left(b_1^N \xi^n+K \nabla \xi^n \cdot \mathbf{n} \right)\delta \theta^{n+1} \, d \gamma \\ 
\displaystyle 
\quad +\int_{\Gamma_S} \left(b_1^S \xi^n + 4b_2^S|\theta^{n+1}|^3 \xi^{n+1} + K \nabla \xi^n \cdot \mathbf{n}
\right) \delta \theta^{n+1} \, d \gamma \bigg] \\ 
\displaystyle 
\quad = \sum_{n=0}^N \bigg[ \alpha \int_{\Omega} (\xi^{n+1}\circ X_+^{n+1}) \delta \theta^{n+1} \, d \mathbf{x} \\ 
\displaystyle 
\quad -\int_{\Omega} (\nabla \theta^{n+1} \circ X^{n+1}_-)^T \xi^{n+1} \cdot \delta \mathbf{v}^{n+1} \, d \mathbf{x} \\ 
\displaystyle 
\quad -\sum_{k=1}^{N_{CT}} \int_{C^k} \left( K \nabla \xi^{n} \cdot \mathbf{n} + \frac{1}{\mu(C^k)} \int_{T^k} K \nabla \xi^{n+1} \cdot 
\mathbf{n} \, d \gamma' \right) \delta \theta^{n+1} \, d \gamma \bigg],
\end{array}
\end{equation}
where, for the sake of simplicity, we have also assumed $\delta \mathbf{v}^{N+1}=\mathbf{0}$.

\item For water velocity:
\begin{equation} \label{eq:system1i} \hspace{-.8cm}
\begin{array}{l}
\displaystyle
\sum_{n=0}^{N-1} \bigg[ \alpha \int_{\Omega} \mathbf{w}^n \cdot \delta \mathbf{v}^{n+1} \, d \mathbf{x}
- \int_{\Omega} \text{div}  \left( \beta(\mathbf{v}^n) e(\mathbf{w}^n) \right) \cdot \delta \mathbf{v}^{n+1} \, d \mathbf{x} \\ 
\displaystyle 
\quad - \int_{\Omega} \nabla \cdot \mathbf{w}^n \delta p^{n+1} \, d \mathbf{x}
+ \int_{\Omega} \nabla q^n \cdot \delta \mathbf{v}^{n+1} \, d \mathbf{x} 
-\lambda \int_{\Omega} q^n \delta p^{n+1} \, d \mathbf{x} \bigg] \\ 
\displaystyle 
\quad = \sum_{n=0}^N \bigg[ \int_{\Omega} \alpha_0 \mathbf{a}_{g} \mathbf{w}^{n+1} \delta \theta^{n+1} \, d \mathbf{x} \bigg] 
+\sum_{n=0}^{N-1} \bigg[ \alpha \int_{\Omega} (\mathbf{w}^{n+1} \circ X^{n+1}_+) \cdot \delta \mathbf{v}^{n+1} \, d \mathbf{x} \\ 
\displaystyle
\quad - \int_{\Omega} (\nabla \mathbf{v}^{n+1} \circ X^{n+1}_-)^T \mathbf{w}^{n+1} \cdot \delta \mathbf{v}^{n+1} \, d \mathbf{x} \bigg] \\ 
\displaystyle 
\quad +\sum_{n=0}^{N-2} \bigg[ \int_{\Omega}\text{div}  (\gamma(\mathbf{v}^{n+1}) e(\mathbf{v}^{n+2}):e(\mathbf{w}^{n+1})
e(\mathbf{v}^{n+1}) ) \cdot \delta \mathbf{v}^{n+1} \, d \mathbf{x} \bigg] \\ 
\displaystyle 
\quad +\sum_{n=0}^{N-1} \sum_{k=1}^{N_{CT}}  \delta g^{n+1,k} \bigg[ \frac{1}{\mu(T^k)}
\int_{T^k} \left( \beta(\mathbf{v}^n)e(\mathbf{w}^n) \mathbf{n} \cdot \mathbf{n} -q^n\right) \, d \gamma \\ 
\displaystyle 
\quad -\frac{1}{\mu(C^k)} \int_{C^k} \left( \beta(\mathbf{v}^n) e(\mathbf{w}^n) 
\mathbf{n} \cdot \mathbf{n} -q^n\right) \, d \gamma \bigg] \\ 
\displaystyle 
\quad +\sum_{n=0}^{N-2} \sum_{k=1}^{N_{CT}}  \delta g^{n+1,k} \bigg[ \frac{1}{\mu(T^k)} \int_{T^k} \gamma(\mathbf{v}^{n+1}) 
e(\mathbf{v}^{n+2}):e(\mathbf{w}^{n+1}) e(\mathbf{v}^{n+1}) \mathbf{n} \cdot \mathbf{n} \, d \gamma \\ 
\displaystyle 
\quad -\frac{1}{\mu(C^k)} \int_{C^k} \gamma(\mathbf{v}^{n+1}) e(\mathbf{v}^{n+2}):e(\mathbf{w}^{n+1})
e(\mathbf{v}^{n+1}) \mathbf{n} \cdot \mathbf{n} \, d \gamma \bigg],
\end{array}
\end{equation}
where we have assumed $\mathbf{w}^{N+1}=\mathbf{0}$. 
\end{itemize}

Thus, if we define $\{(\mathbf{w}^n,q^n)\}_{n=0}^N \subset \mathbf{W}_h \times M_h$, $\{\xi^n\}_{n=0}^{N+1} \subset H_h$ 
and $\{\mathbf{z}^n\}_{n=0}^{N+1} \subset \mathbf{Z}_h$, such that $\mathbf{w}^N=\mathbf{0}$, $q^N=0$, $\xi^{N+1}=0$ and 
$\mathbf{z}^{N+1}=\mathbf{0}$, as the solutions of the adjoint system (\ref{eq:system1i-1})-(\ref{eq:system1i-2}), 
(\ref{eq:system2i-1})-(\ref{eq:system2i-2}) and (\ref{eq:system3i-1})-(\ref{eq:system3i-2}), respectively, 
we obtain, after summing above expressions (\ref{eq:system3i}),  (\ref{eq:system2i}) and (\ref{eq:system1i}), that:
\begin{equation} \nonumber \hspace{-.1cm}
\begin{array}{l}
\displaystyle 
\sum_{n=0}^{N} \int_{\Omega} \mathbf{H}^{n+1}_k \cdot \delta \mathbf{u}^{n+1} \, d \mathbf{x}
= \sum_{n=0}^{N-1} \sum_{k=1}^{N_{CT}}  \delta g^{n+1,k} \bigg[ \frac{1}{\mu(T^k)} \int_{T^k} \left( \beta(\mathbf{v}^n)
e(\mathbf{w}^n) \mathbf{n} \cdot \mathbf{n} -q^n\right) \, d \gamma  \\ 
\displaystyle 
\quad -\frac{1}{\mu(C^k)} \int_{C^k} \left( \beta(\mathbf{v}^n) e(\mathbf{w}^n) \mathbf{n} 
\cdot \mathbf{n} -q^n\right) \, d \gamma \bigg] \\ 
\displaystyle 
\quad +\sum_{n=0}^{N-2} \sum_{k=1}^{N_{CT}}  \delta g^{n+1,k} \bigg[ \frac{1}{\mu(T^k)} \int_{T^k} \gamma(\mathbf{v}^{n+1}) 
e(\mathbf{v}^{n+2}):e(\mathbf{w}^{n+1}) e(\mathbf{v}^{n+1}) \mathbf{n} \cdot \mathbf{n} \, d \gamma \\ 
\displaystyle 
\quad -\frac{1}{\mu(C^k)} \int_{C^k} \gamma(\mathbf{v}^{n+1}) 
e(\mathbf{v}^{n+2}):e(\mathbf{w}^{n+1}) e(\mathbf{v}^{n+1}) \mathbf{n} \cdot \mathbf{n} \, d \gamma \bigg].
\end{array}
\end{equation}
And, finally, from the definition:
\begin{equation}
\sum_{n=0}^{N} \int_{\Omega} \mathbf{H}^{n+1}_k \cdot \delta \mathbf{u}^{n+1} \, d \mathbf{x}
= \frac{1}{\mu(\Omega_C)} \int_{\Omega_C} \delta u^{k+1,5}\, d \mathbf{x}.
\end{equation}
\qed
\end{proof}

\subsection{Numerical results}

In order to simplify the graphical representation of the computational results for the numerical tests developed in this study, 
we will present here only the case of a two dimensional domain $\Omega$. 
So, we consider a space configuration similar to that presented in Figure~\ref{figure1}, with $N_{CT} = 4$ collector/injector pairs, 
in a rectangular domain of $20 \, {\rm m} \times 16 \, {\rm m}$. 
We suppose that the diameter of each collector is $1\, {\rm m}$ and the diameter of each injector is $2 \, {\rm m}$. 
For the coefficients of the eutrophication model (\ref{eq:system3}), we have used the same values as those appearing in \cite{DRAGO200117}, 
and for the thermo-hydrodynamic system (\ref{eq:system1}), (\ref{eq:system2}) we have employed the same values as in \cite{fran8}. 
For the space discretization we have generated a regular mesh of $2989$ vertices, as shown in Figure~\ref{figure4}.

\begin{figure}[!ht]
\centering
\includegraphics[width=0.55\textwidth]{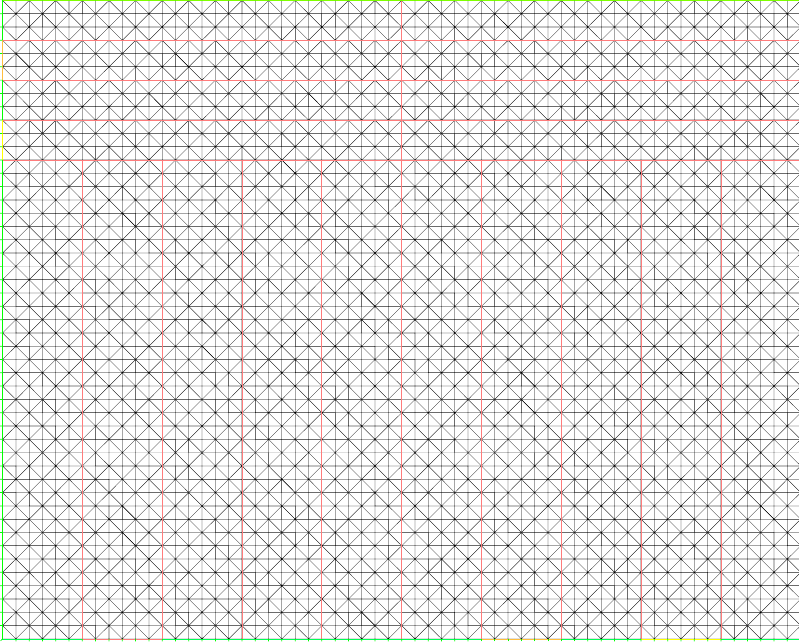}
\caption{Triangular mesh of the domain $\Omega$ for the numerical tests.}
\label{figure4}
\end{figure}

The control domain $\Omega_C$ corresponds to a $3 \, {\rm m}$ strip at the bottom of the domain, 
and all the numerical tests have been performed in a temporal horizon of 12 hours ($T = 43200 \, {\rm s}$). 
Finally, in order to simulate the effects of solar radiation for the heat equation (\ref{eq:system2}), we consider the 
standard function $T_r$ depicted in Figure~\ref{figure4bis}.

\begin{figure}[!ht]
\centering
\includegraphics[width=0.45\textwidth]{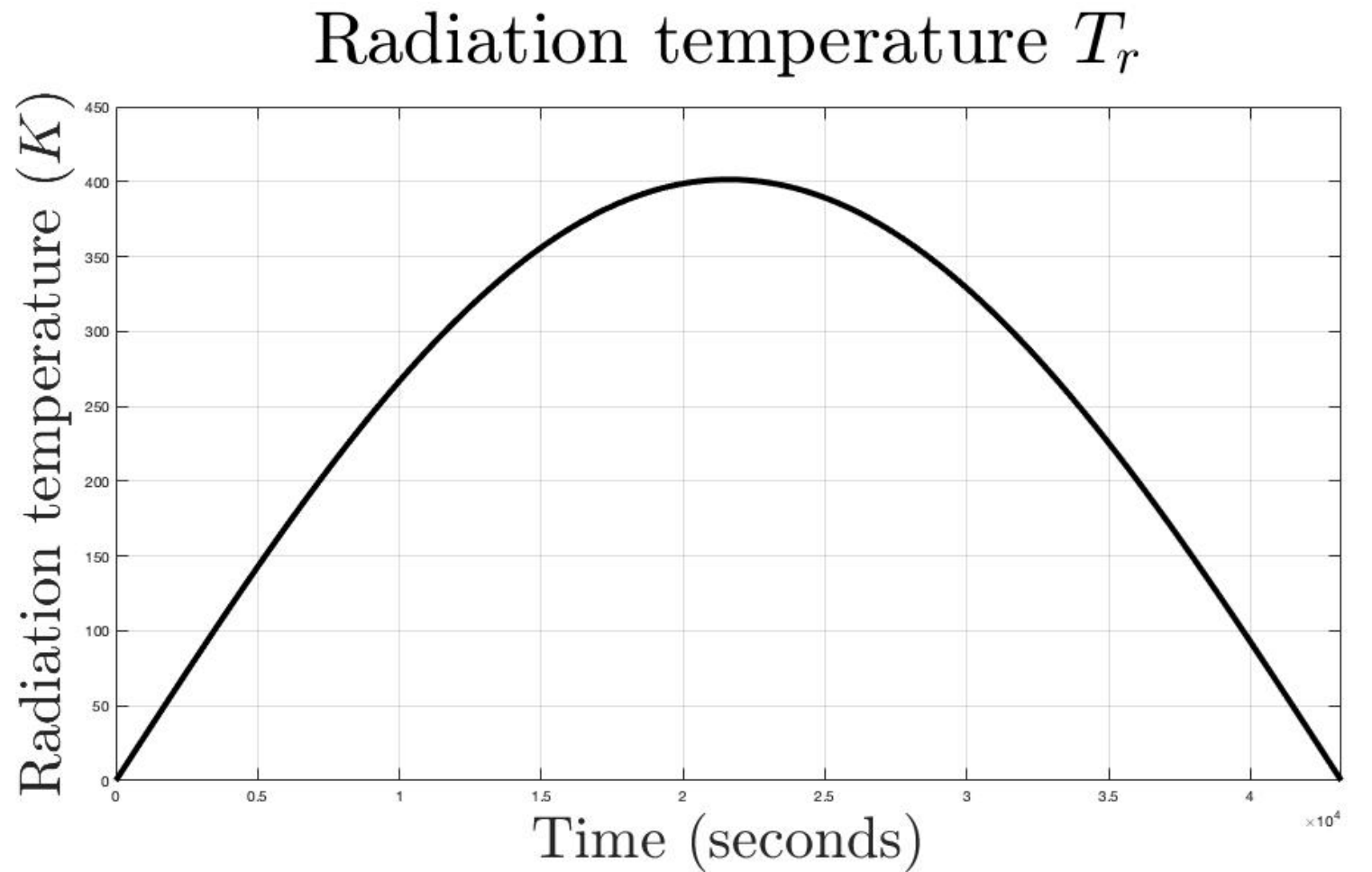}
\caption{Standard profile for radiation temperature $T_r$.}
\label{figure4bis}
\end{figure}

We must remark that our main goal in this first approximation to the numerical resolution of the problem is trying to understand if we 
can improve the management of the pumps with respect to a constant operating regime. So, given a constant reference 
control $\widetilde{\mathbf{g}}$, with $\widetilde{g}^{n,k}=C$ (constant), for $n=1,\ldots,N$, $k=1,\ldots,N_{CT}$, we will 
solve the following modification of the original optimization problem $(\mathcal{P})$:
\begin{equation} \nonumber
(\widehat{\mathcal{P}}) \qquad \min\{J(\mathbf{g}) \, :\; \mathbf{g} \in \mathcal{U}_{ad}, \;
\mathbf{G}(\mathbf{g}) \geq \mathbf{G}(\widetilde{\mathbf{g}})\}.
\end{equation}
In other words, we want to find an optimal control $\widehat{\mathbf{g}}\in \mathcal{U}_{ad}$ that supplies us with a 
higher concentration of dissolved oxygen than that obtained with the constant control $\widetilde{\mathbf{g}}$, 
and that minimizes the energy cost functional $J$. As an illustration to this behaviour, 
in Figure~\ref{figure5} we can see the evolution of the mean concentration of dissolved oxygen in the control domain $\Omega_C$ 
considering a constant reference control $\widetilde{g}^{n,k} = 1.0 \times 10^{-4} \, {\rm m}^3 \, {\rm s}^{-1}$, $\forall n=1,\ldots,N$, 
$\forall k=1,\ldots,N_{CT}$, compared to the mean concentration assuming that all the pumps are out of service 
(that is, $\widetilde{g}^{n,k}= 0.0 \, {\rm m}^3 \, {\rm s}^{-1}$, $\forall n=1,\ldots,N$, $\forall k=1,\ldots,N_{CT}$).
We observe how, if the pumps are out of service, the mean concentration of dissolved oxygen in the control domain decays gradually but, 
nevertheless, if we consider a constant flow rate (not necessarily large), this mean concentration 
of dissolved oxygen increases in a significant way. 

\begin{figure}[!ht]
\centering
\includegraphics[width=0.65\textwidth]{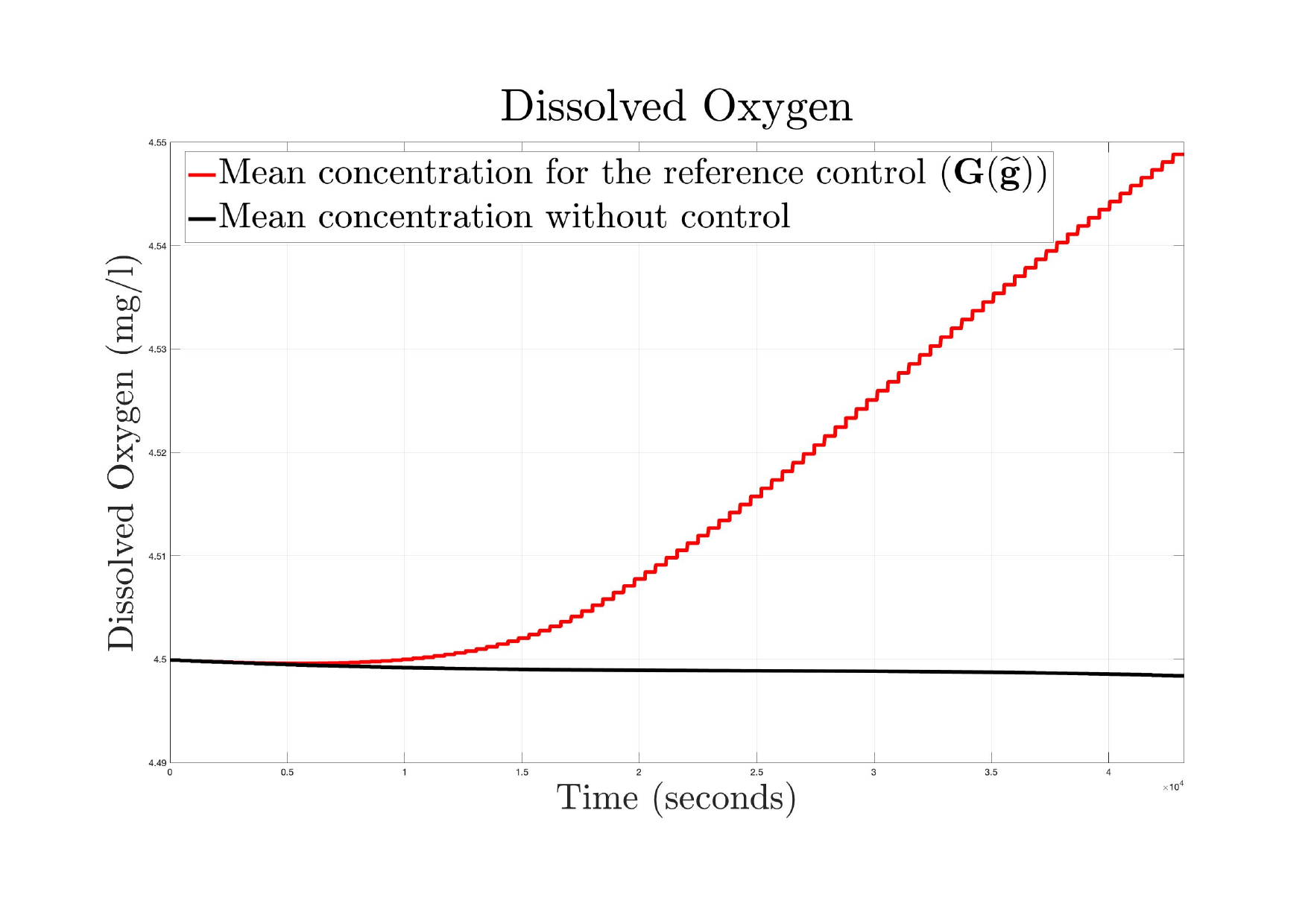}
\caption{Comparison of the mean concentrations of dissolved oxygen in $\Omega_C$, taking a time step length $\Delta t=450\, {\rm s}$, 
for a constant flow rate of $1.0 \times 10^{-4} \, {\rm m}^3 \, {\rm s}^{-1}$ in all the pumps, and for the case without pumping.}
\label{figure5}
\end{figure}

In this final part of the Section we present several numerical results that we have obtained using 
different choices of the time step length $\Delta t$. We must mention that in the numerous numerical tests developed,
we have always obtained that $\mathbf{G}(\widehat{\mathbf{g}})=\mathbf{G}(\widetilde{\mathbf{g}})$, and also a 
reduction in the value of the cost functional $J(\widehat{\mathbf{g}}) < J(\widetilde{\mathbf{g}})$. 
So, in Figure~\ref{figure6} we can see the optimal control that we have obtained taking 
$\sigma_1=0.5$ and $\sigma_2=1-\sigma_1=0.5$, for time steps of $\Delta t=3600\, {\rm s}$ and $\Delta t=1800\, {\rm s}$
(corresponding to $N=12$ and $N=24$, respectively). 
In Figure~\ref{figure7} we can find the optimal control corresponding to time steps of $\Delta t=900 \, {\rm s}$ and $\Delta t=450\, {\rm s}$
($N=48$ and $N=96$, respectively), showing the robustness of our methodology.
 
\begin{figure}[!ht]
\centering
\includegraphics[width=0.49\textwidth]{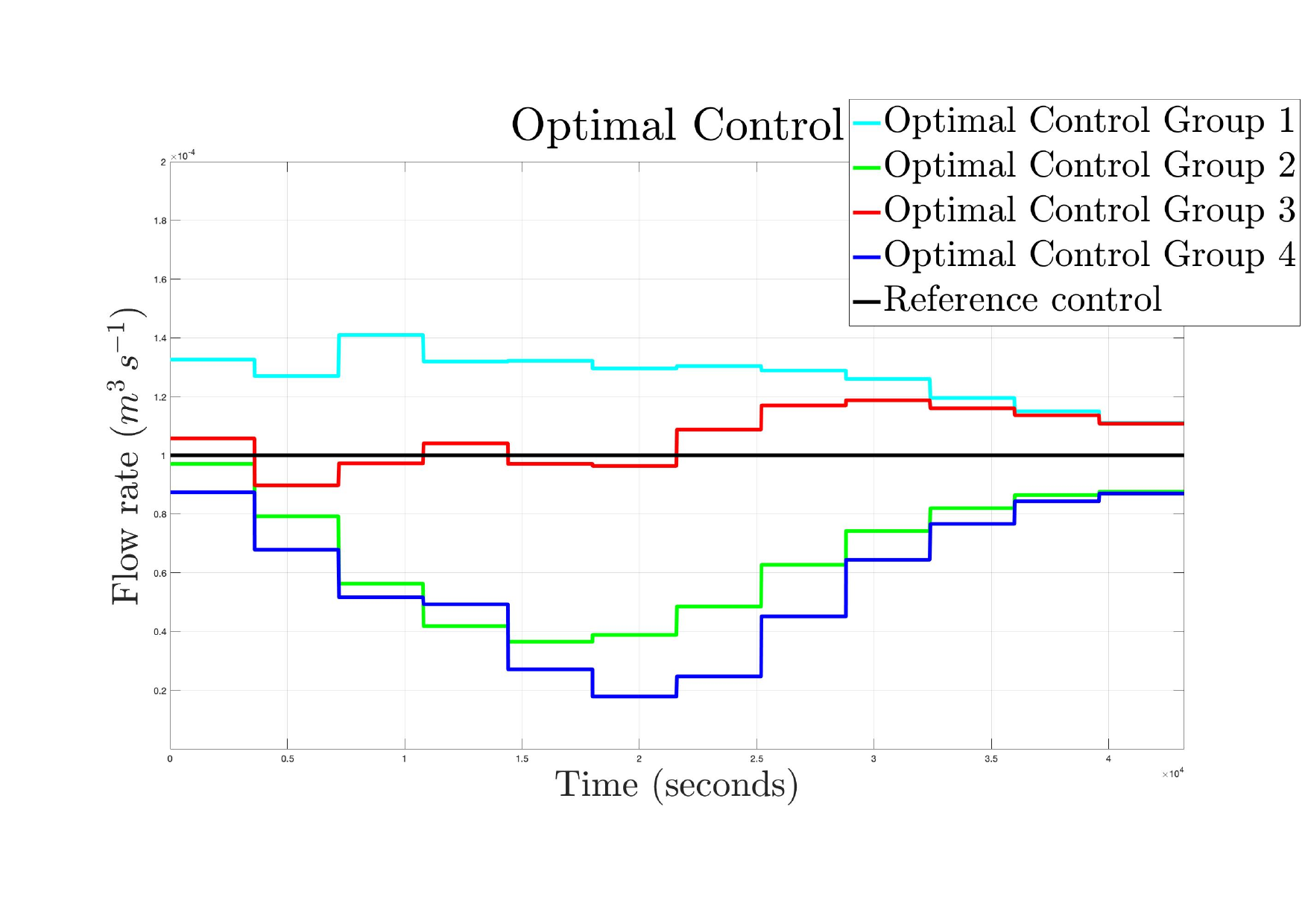}
\includegraphics[width=0.49\textwidth]{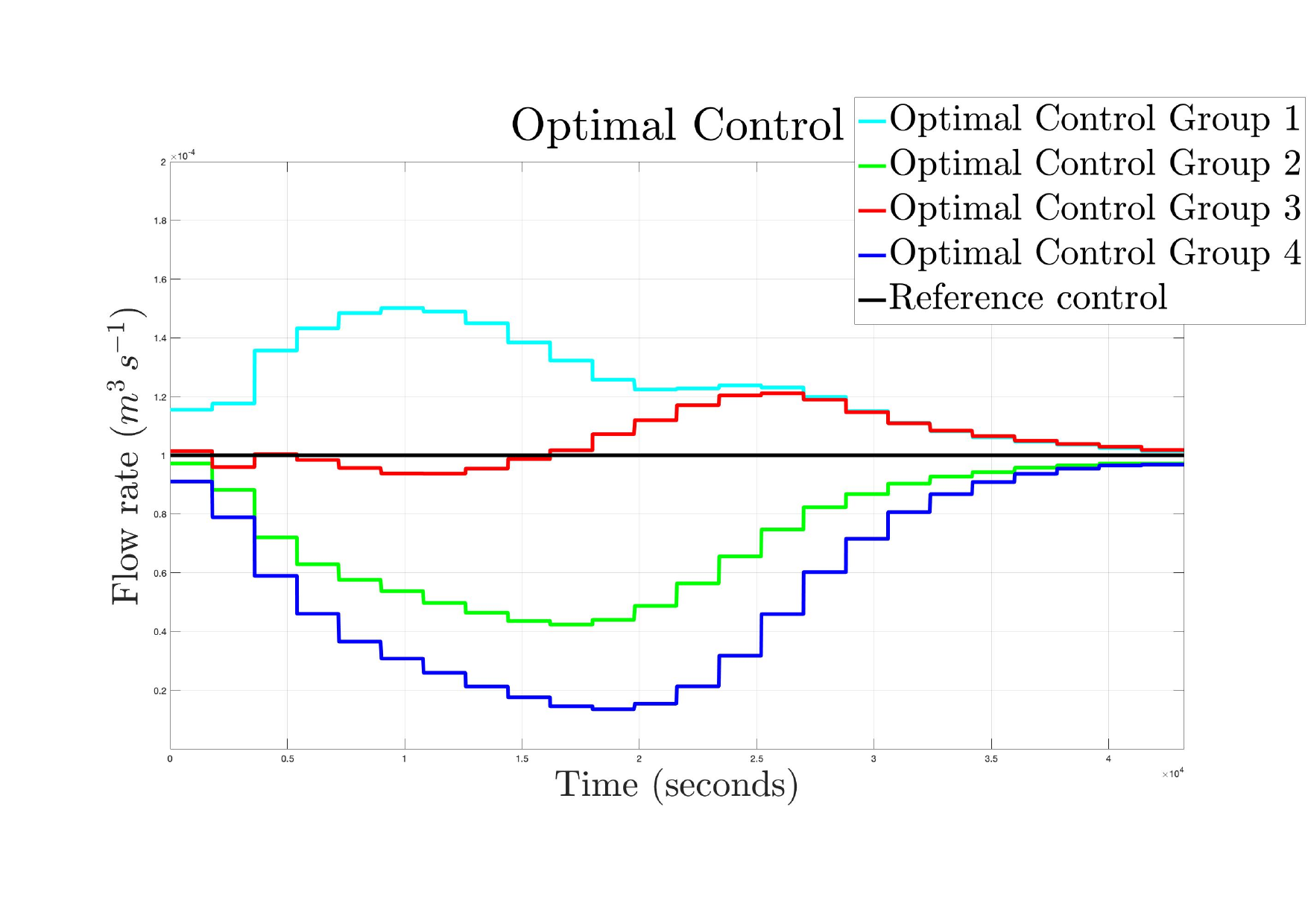}
\caption{Evolution of the optimal flow rates for the four pumps taking weights $\sigma_1 = \sigma_2 = 0.5$, and
$\Delta t=3600 \, {\rm s}$ (left) or $\Delta t=1800 \, {\rm s}$ (right).}
\label{figure6}
\end{figure}

\begin{figure}[!ht]
\centering
\includegraphics[width=0.49\textwidth]{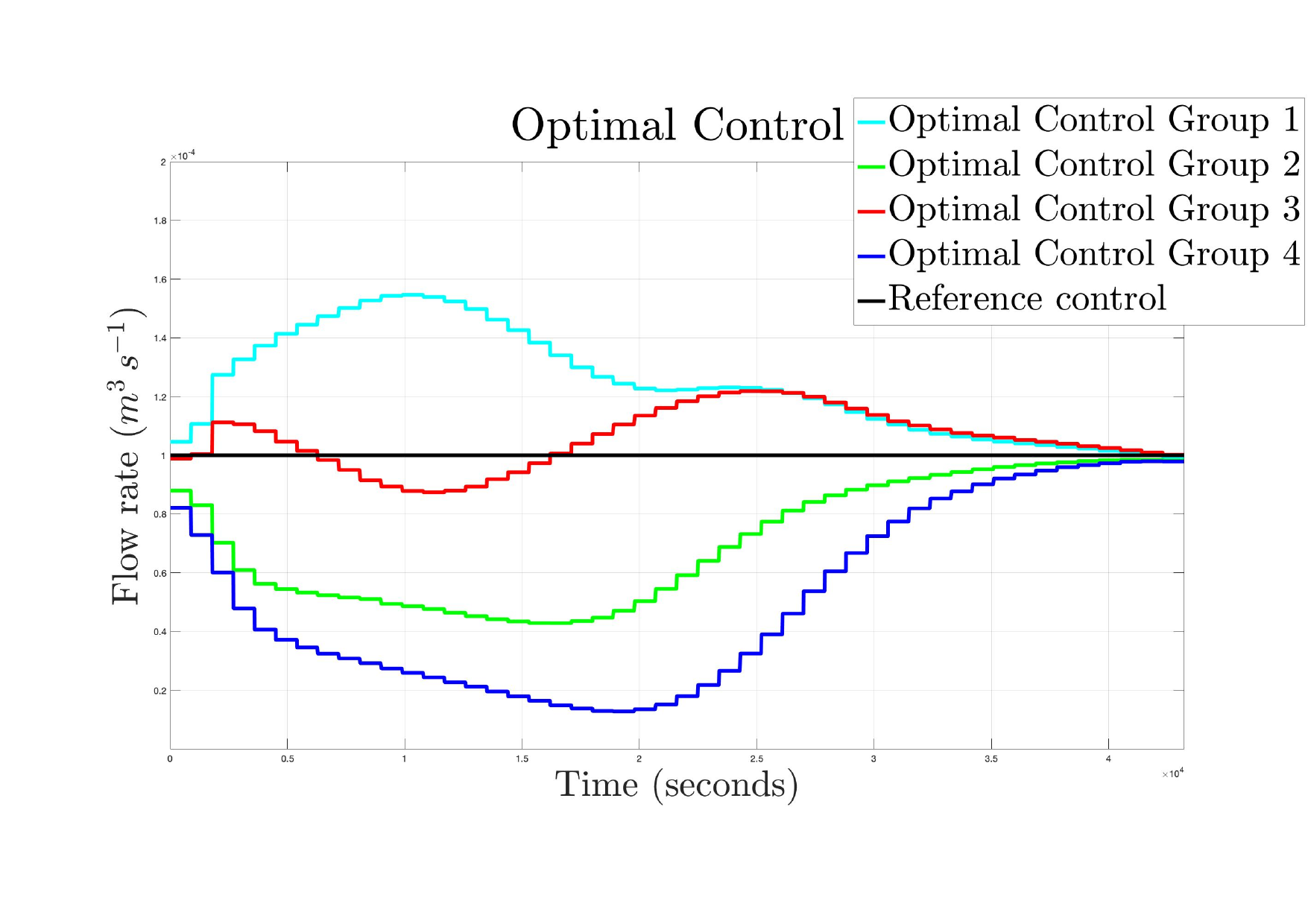}
\includegraphics[width=0.49\textwidth]{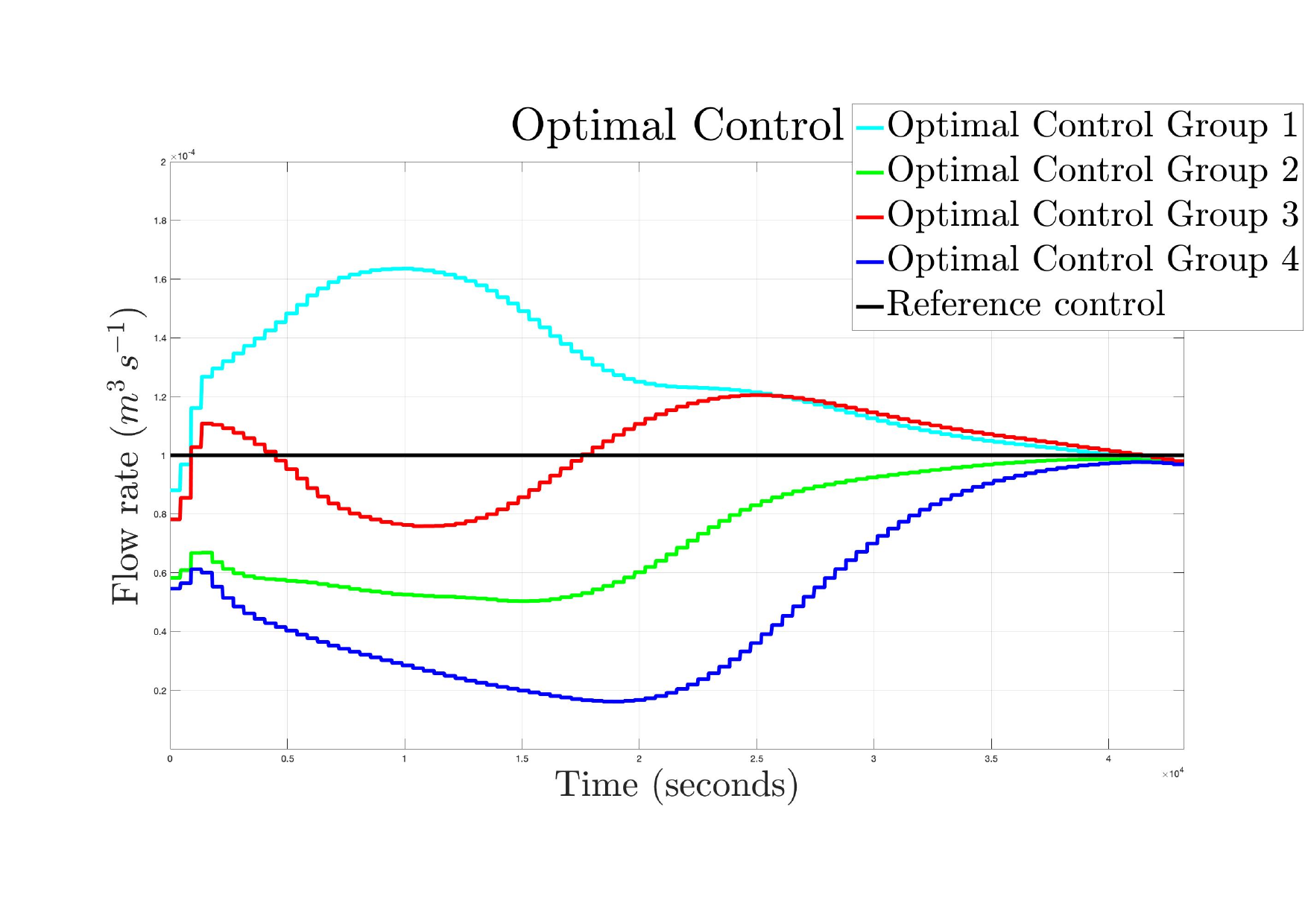}
\caption{Evolution of the optimal flow rates for the four pumps taking $\sigma_1 = \sigma_2 = 0.5$, and
$\Delta t=900 \, {\rm s}$ (left) or $\Delta t=450 \, {\rm s}$ (right).}
\label{figure7}
\end{figure}

We observe that the flow rates associated to the two upper collectors ($g^1$ and $g^3$) are significantly higher 
than the corresponding to lower collectors ($g^2$ and $g^4$). This is caused by the fact that the photosynthesis 
is more intense in the superficial layers and, consequently, the presence of dissolved oxygen is higher there. 

{\color{red} In Table~\ref{table1} we can see the comparison between the functional cost evaluated 
in the reference control and in the optimal control. We can observe that as we decrease the time step, 
the difference between the reference cost and the optimal cost increases. This is because as we decrease 
the time step, we can act more precisely over the system and achieve better results.
\begin{table}[!ht]
\centering
\begin{tabular}{ccccc}
& $\Delta t=3600$ s & $\Delta t=1800$ s & $\Delta t=900$ s & $\Delta t=450$ s \\ \hline
$J(\widetilde{\mathbf{g}})$ & $1.2000e-07$ & $2.4000e-07$ & $4.8000e-07$ & $9.6000e-07$\\
$J(\widehat{\mathbf{g}})$ & $1.0973e-07$ & $2.1865e-07$ & $4.3195e-07$ & $8.7104e-07$
\end{tabular}
\caption{\label{table1} Functional cost evaluated in the Reference Control ($\widetilde{\mathbf{g}}$) 
vs. Optimal Control ($\widehat{\mathbf{g}}$).}
\end{table}}

In Figure~\ref{figure8} we can see the evolution of the constraints for the choice of the time step length $\Delta t=450\, {\rm s}$. 
We can verify there that the optimal constraint $\mathbf{G}(\widetilde{\mathbf{g}})$ and the reference constraint 
$\mathbf{G}(\widehat{\mathbf{g}})$ are virtually indistinguishable, that is, with optimal strategy $\widetilde{\mathbf{g}}$ we obtain 
the same water quality in the control region as with the constant reference flow rate $\widetilde{\mathbf{g}}$, 
but with a significative decrease in energy cost.

\begin{figure}[!ht]
\centering
\includegraphics[width=0.65\textwidth]{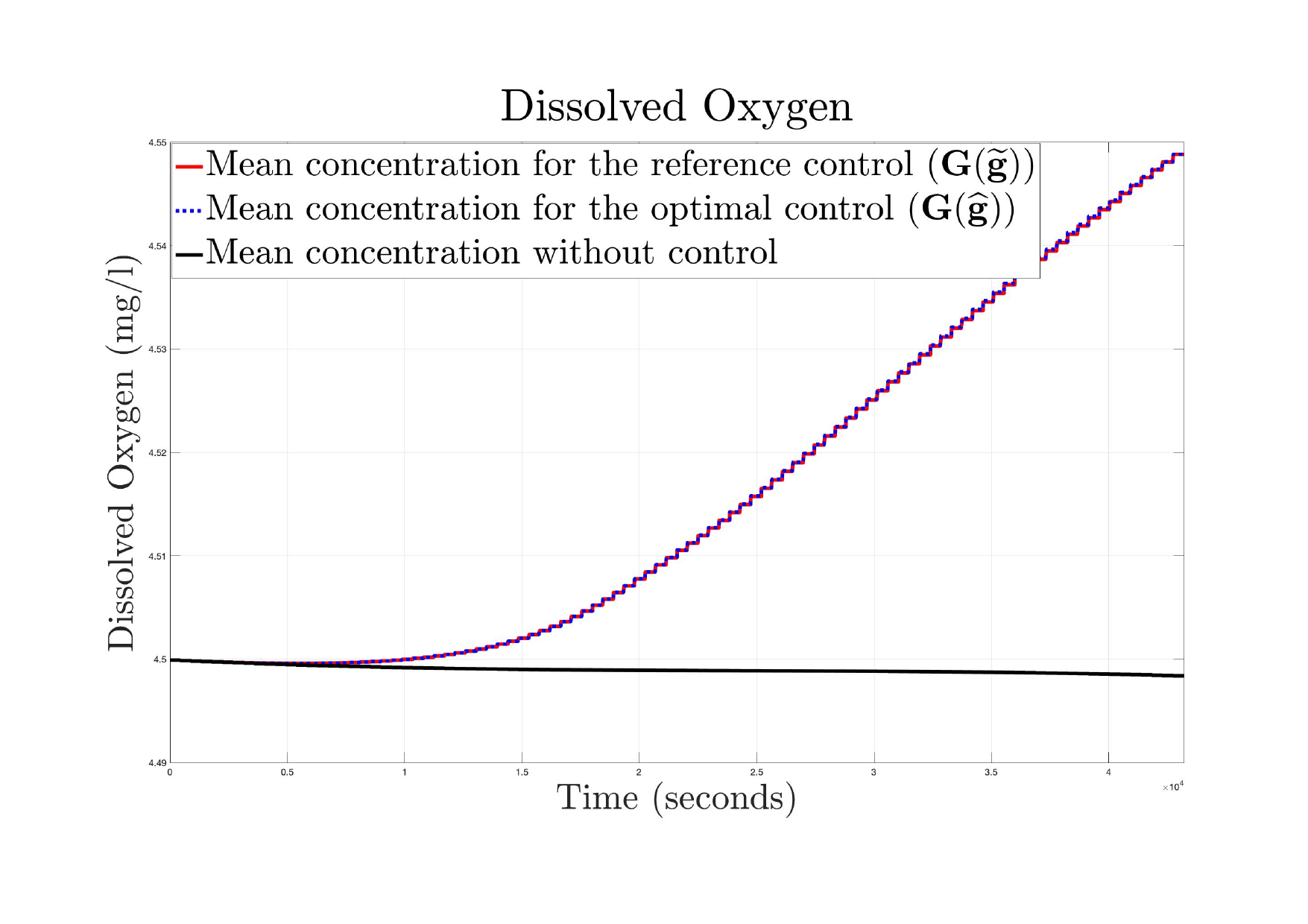}
\caption{Evolution of the constraints, for $\Delta t=450\, {\rm s}$, in the controlled and uncontrolled cases.}
\label{figure8}
\end{figure}

Finally, in Figure~\ref{figure9} we show the concentration of dissolved oxygen in the whole domain $\Omega$
associated to the optimal control solution for $\Delta t=450\, {\rm s}$ (left),
and the concentration of dissolved oxygen when all the pumps are off (right), 
both in the last time step (corresponding to $N=96$). 
We can easily notice here the pumping effects associated to the optimal control in the bottom layer, with an evident
improvement of water quality in the region. 

\begin{figure}[!ht]
\centering
\includegraphics[width=0.49\textwidth]{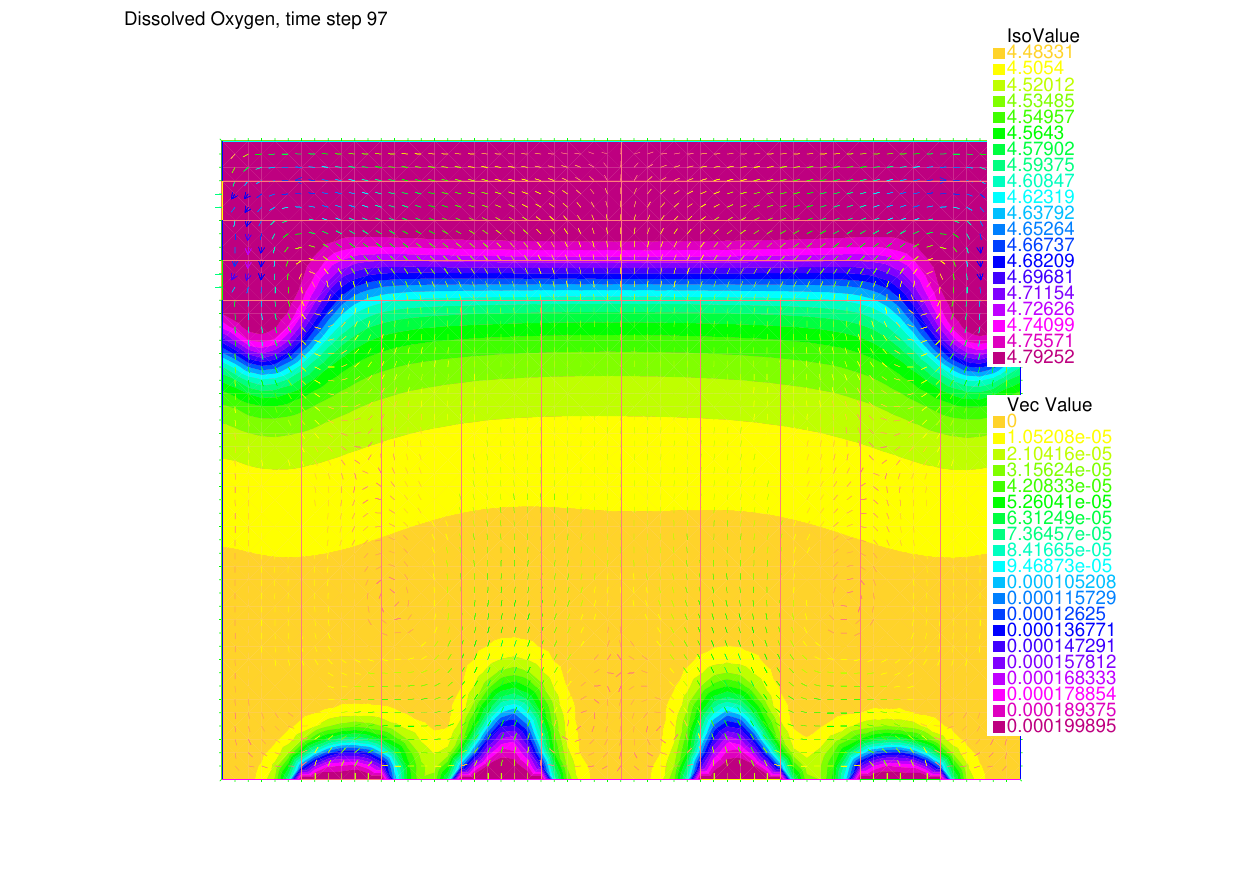}
\includegraphics[width=0.49\textwidth]{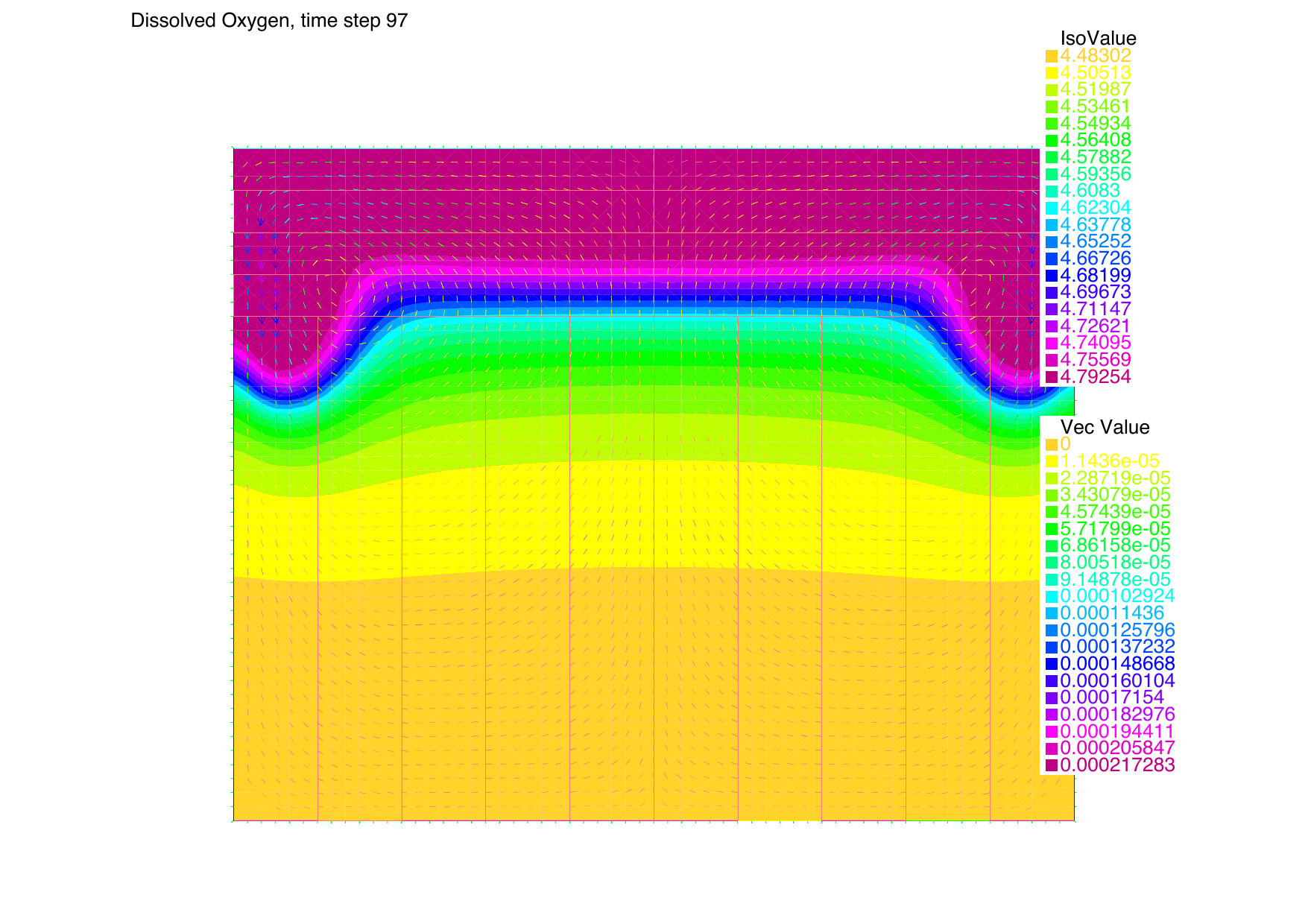}
\caption{Concentration of dissolved oxygen in the last time step  
corresponding to the optimal solution (left), and without control (right).}
\label{figure9}
\end{figure}

% Conclusiones
%\section{Conclusions}

%\begin{acknowledgements}
%If you'd like to thank anyone, place your comments here
%and remove the percent signs.
%\end{acknowledgements}

% BibTeX users please use one of
%\bibliographystyle{spbasic}      % basic style, author-year citations
%\bibliographystyle{spmpsci}      % mathematics and physical sciences
%\bibliographystyle{spphys}       % APS-like style for physics
%\bibliography{}   % name your BibTeX data base

% Non-BibTeX users please use

\end{document}